\documentclass{amsart}
\usepackage{amsmath}
\usepackage[T1]{fontenc}
  \usepackage{paralist}
  \usepackage{graphics} 
  \usepackage{epsfig} 
 \usepackage[colorlinks=true]{hyperref}
\hypersetup{urlcolor=blue, citecolor=red}

  \textheight=8.2 true in
   \textwidth=5.0 true in
    \topmargin 30pt
     \setcounter{page}{1}


\newtheorem{theorem}{Theorem}[section]
\newtheorem{corollary}[theorem]{Corollary}

\newtheorem{lemma}[theorem]{Lemma}
\newtheorem{proposition}[theorem]{Proposition}

\theoremstyle{definition}
\newtheorem{definition}[theorem]{Definition}
\newtheorem{remark}[theorem]{Remark}
\newtheorem*{notation}{Notation}

\newcommand{\R}{{\mathbb{R}}}

\newcommand{\Z}{{\mathbb{Z}}}
\newcommand{\N}{{\mathbb{N}}}

\newcommand{\xbm}{(X,\mathcal{B},\mu)}

\newcommand{\vep}{\varepsilon}
\newcommand{\bez}{\nopagebreak\hspace*{\fill}\nolinebreak$\Box$}

\title[Disjointness of interval exchange
transformations]{Disjointness of interval exchange transformations
from systems of probabilistic origin}

\author[Jacek Brzykcy and Krzysztof Fr\k{a}czek]{}

\subjclass{37A05, 37E05}

\keywords{Interval exchange transformations, disjointness of
dynamical systems}

\email{eddie@mat.umk.pl} \email{fraczek@mat.umk.pl}

\thanks{Research partially supported by MNiSzW grant N N201
384834 and Marie Curie "Transfer of Knowledge" program, project
MTKD-CT-2005-030042 (TODEQ)}

\begin{document}
\maketitle

\centerline{\scshape Jacek Brzykcy}
\medskip
{\footnotesize
 \centerline{Faculty of Mathematics and Computer Science}
   \centerline{Nicolaus
Copernicus University}
   \centerline{ ul. Chopina 12/18, 87-100 Toru\'n, Poland}}

\medskip

\centerline{\scshape  Krzysztof Fr\k{a}czek }
\medskip
{\footnotesize
 \centerline{Faculty of Mathematics and Computer Science}
   \centerline{Nicolaus
Copernicus University}
   \centerline{ ul. Chopina 12/18, 87-100 Toru\'n, Poland}
 \medskip
  \centerline {Institute of Mathematics,
Polish Academy of Science} \centerline {ul. \'Sniadeckich 8,
00-956 Warszawa, Poland }}

\bigskip

\begin{abstract} It is proved that almost every interval exchange
transformation given by  the symmetric permutation
$$\begin{pmatrix}
    1&2&\ldots&m-1&m\\
    m&m-1&\ldots&2&1\\
        \end{pmatrix},$$
where $m\geq 5$ is an odd number, is disjoint from ELF systems.
Some disjointness properties  of special flows built over interval
exchange transformations and under piecewise constant roof
function are investigated as well.
\end{abstract}

\section{Introduction}
The notion of ELF systems was introduced in paper \cite{Fr-Le1} to
express the fact that a given system is of probabilistic origin.
An automorphism $T:\xbm\to\xbm$ is said to has the ELF property,
if it is ergodic and the weak closure of the set of all its
iterations $\{T^n:n\in\N\}$, considered as Markov operators in
$L^2\xbm$, consists of indecomposable Markov operators. The
following standard classes of systems of probabilistic origin
enjoy the ELF property: mixing systems, ergodic Gaussian systems
(see \cite{Le-Pa-Th} and \cite{Fr-Le1}), Poisson suspensions,
dynamical systems coming from stationary symmetric
$\alpha$--stable processes (see \cite{De-Fr-Le-Pa}) and infinitely
divisible processes (see \cite{Roy}).

This work continues the research programme  begun in
\cite{Fr-Le1}. The purpose of that project is to study
deterministic systems that are disjoint from systems of
probabilistic origin. This is closely related to some problems of
smooth realization for systems of probabilistic origin (see
\cite{Fr-Le2}). Recall that two measure-preserving automorphisms
of standard probability spaces are disjoint if they have only one
joining equal to the product measure (see \cite{Fu}). According to
the programme, in \cite{Fr-Le2}, it was proved that smooth ergodic
flows of compact orientable smooth surfaces having  only
non--degenerate saddles as isolated critical points (and having a
``good'' transversal) are disjoint from  ELF flows. A substantial
contribution to the project was provided in \cite{De-Fr-Le-Pa}
where some classes of automorphisms disjoint from ELF
automorphisms were found. The proofs of disjointness in
\cite{Fr-Le1} and \cite{Fr-Le2} were based mainly on the following
two results.
\begin{proposition}[\cite{Fr-Le2}]\label{proprozlflow} Suppose that
$\mathcal{T}=\{T_t\}_{t\in\mathbb{R}}$ is an ergodic flow on
$(X,\mathcal{B},\mu)$ such that there exist a sequence
$\{t_n\}\subset\mathbb{R}$, $t_n\to+\infty$, $0<\alpha\leq1$,
$J\in\mathcal{J}(\mathcal{T})$ and a  probability Borel measure on
$\mathbb{R}$ for which
$$T_{t_n}\rightarrow\alpha\int_{\mathbb{R}}T_sdP(s)+(1-\alpha)J\text{ weakly in }\mathcal{L}(L^2(X,\mathcal{B},\mu)).$$
If $P$ is not a Dirac measure then $\mathcal{T}$ is disjoint from
all weakly mixing ELF flows.
\end{proposition}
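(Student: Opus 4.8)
The plan is to verify disjointness directly, i.e.\ to show that any joining of $\mathcal{T}$ with an arbitrary weakly mixing ELF flow $\mathcal{S}=\{S_t\}_{t\in\R}$ on a space $(Y,\mathcal{C},\nu)$ is the product measure $\mu\otimes\nu$. Since the ergodic components of a joining are again joinings, and since $\mu\otimes\nu$ is ergodic (the flow $\mathcal{T}$ being ergodic and $\mathcal{S}$ weakly mixing), it suffices to treat an ergodic joining $\rho$. I would encode $\rho$ by the associated Markov intertwining operator $\Phi=\Phi_\rho\colon L^2(X,\mu)\to L^2(Y,\nu)$ (conditional expectation onto $\mathcal{C}$, read inside $L^2(Y,\nu)$): it satisfies $\|\Phi\|\le1$, $\Phi 1=1$, $\Phi^*1=1$, $\Phi\circ T_t=S_t\circ\Phi$ for every $t$, and it carries the subspace $L^2_0(X,\mu)$ of mean--zero functions into $L^2_0(Y,\nu)$. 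Writing $\Phi_0:=\Phi|_{L^2_0(X,\mu)}$, one has $\rho=\mu\otimes\nu$ if and only if $\Phi_0=0$; thus the goal is to prove $\Phi_0=0$.

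Next I would transport the hypothesis through $\Phi$. The unit ball of $\mathcal{L}(L^2(Y,\nu))$ is compact in the weak operator topology, so after passing to a subsequence of $(t_n)$ we may assume $S_{t_n}\to Q$ weakly; then $Q$ is a Markov operator commuting with $\mathcal{S}$, and by the ELF property of $\mathcal{S}$ it is \emph{indecomposable}, i.e.\ an extreme point of the (convex, weakly compact) set of Markov operators on $L^2(Y,\nu)$. Multiplying the assumed convergence and the relation $\Phi\circ T_{t_n}=S_{t_n}\circ\Phi$ on the left and on the right by the fixed operator $\Phi$ (which is weak--operator continuous), and using
\[
\Phi\int_{\R}T_s\,dP(s)=\int_{\R}(\Phi\circ T_s)\,dP(s)=\Big(\int_{\R}S_s\,dP(s)\Big)\circ\Phi,
\]
I arrive at the operator identity
\[
\alpha\,R_Y\Phi+(1-\alpha)\,\Phi J=Q\Phi,\qquad\text{where}\quad R_Y:=\int_{\R}S_s\,dP(s).
\]
When $\alpha=1$ this reads $R_Y\Phi=Q\Phi$; when $\alpha<1$ it gives $\Phi J=N\Phi$ with $N:=\tfrac1{1-\alpha}(Q-\alpha R_Y)$, whence $\Phi J^k=N^k\Phi$ for every $k\ge1$.

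The third ingredient is spectral, and here the assumption that $P$ is not a Dirac mass is used. The Markov operator $R_Y$ fixes the constants and preserves $L^2_0(Y,\nu)$, and by the spectral theorem for the $\R$--action $\mathcal{S}$ one has
\[
\|R_Y g\|^2=\int_{\R}|\widehat P(t)|^2\,d\sigma_g(t)\qquad(g\in L^2_0(Y,\nu)),
\]
where $\sigma_g$ is the spectral measure of $g$ and $\widehat P(t)=\int_{\R}e^{2\pi i st}\,dP(s)$. If $P$ is not a Dirac measure, then $\{t:|\widehat P(t)|=1\}$ is countable: $|\widehat P(t)|=1$ forces $P$ to be carried by a coset of $\tfrac1t\Z$, and two such values of $t$ with irrational ratio would confine $P$ to a single point. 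On the other hand, weak mixing of $\mathcal{S}$ makes $\sigma_g$ continuous for every $g\in L^2_0(Y,\nu)$. Consequently $\|R_Y g\|<\|g\|$ for each non--zero $g\in L^2_0(Y,\nu)$, and moreover $\|R_Y^k g\|\to0$ as $k\to\infty$.

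Finally --- and this is the step I expect to carry the real weight --- one combines the identity $\alpha R_Y\Phi+(1-\alpha)\Phi J=Q\Phi$ with the strict contractivity of $R_Y$ on $L^2_0(Y,\nu)$ and the indecomposability of $Q$ to conclude $\Phi_0=0$. The point is that $\overline{\mathrm{range}\,\Phi}$ is a closed $\mathcal{S}$--invariant subspace, non--trivial once $\Phi_0\ne0$, on which the identity essentially reduces the behaviour of $Q$ to that of $\int_{\R}S_s\,dP(s)$; but this last operator is \emph{decomposable}, because $P$ is not Dirac and $\mathcal{S}$, being weakly mixing, is aperiodic, so the $S_s$ are genuinely averaged (splitting $P=\tfrac12(P+\eta)+\tfrac12(P-\eta)$ for a suitable small signed measure $\eta$ of total mass $0$ supported in $\operatorname{supp}P$ exhibits $\int_{\R}S_s\,dP(s)$ as a non--trivial convex combination of distinct Markov operators). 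Quantitatively one would play $\|\Phi J^k f\|\le\|f\|$ against $\|R_Y^k g\|\to0$ to control the orbit of $\Phi_0 f$, and then invoke that an indecomposable element of the weak closure of a weakly mixing flow cannot, on a non--trivial invariant subspace, exhibit the collapse of peripheral spectrum imposed by $R_Y$. Turning this incompatibility into a rigorous argument --- in effect: a weakly mixing ELF flow admits no non--trivial Markov intertwining with a flow possessing partial rigidity along a non--degenerate distribution --- is the crux; the preceding reductions are routine.
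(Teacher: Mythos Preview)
This proposition is not proved in the present paper at all: it is quoted from \cite{Fr-Le2} and used as a black box, so there is no ``paper's own proof'' to compare your attempt with.

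As to the attempt itself, the reductions down to the identity
\[
Q\Phi=\alpha R_Y\Phi+(1-\alpha)\Phi J,\qquad R_Y=\int_{\R}S_s\,dP(s),
\]
are correct, and your spectral computation showing $\|R_Y^kg\|\to0$ for $g\in L^2_0(Y,\nu)$ is fine. But the final step --- which you yourself flag as ``the crux'' --- is a genuine gap, not a routine omission. The identity above is an equation between intertwining operators $L^2(X,\mu)\to L^2(Y,\nu)$; it does \emph{not} exhibit $Q$ as a non-trivial convex combination inside $\mathcal{J}(\mathcal{S})$, which is what indecomposability of $Q$ would forbid. An extreme point of $\mathcal{J}(\mathcal{S})$ may perfectly well coincide with a decomposable operator on a proper closed $\mathcal{S}$--invariant subspace such as $\overline{\Phi(L^2_0(X,\mu))}$, so neither the strict contractivity of $R_Y$ nor your splitting $P=\tfrac12(P+\eta)+\tfrac12(P-\eta)$ produces a contradiction: the latter only decomposes $R_Y\Phi$ (equivalently, the joining $Q\Phi\in J(\mathcal{T},\mathcal{S})$), not $Q\in\mathcal{J}(\mathcal{S})$. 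Your iterated formula $Q^n\Phi=\sum_k\binom{n}{k}\alpha^k(1-\alpha)^{n-k}R_Y^k\Phi J^{n-k}$ likewise gives no control, since $\|R_Y^k\|_{L^2_0}$ need not tend to~$0$ in operator norm and nothing forces $\|Q^ng\|$ to stay bounded below.

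What is missing is a mechanism that transports the decomposition from the $\mathcal{T}$--side to a decomposition of an element of $\overline{\{S_t:t\in\R\}}$ in $\mathcal{J}(\mathcal{S})$ itself. In \cite{Fr-Le2} this is done by working with self-joinings of the coupled flow on $(X\times Y,\rho)$ and projecting to the $\mathcal{S}$--factor, rather than by composing everything with $\Phi$; that passage is precisely what your sketch lacks.
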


\begin{proposition}[\cite{De-Fr-Le-Pa}] \label{proprozaut} Let
$T:(X,\mathcal{B},\mu)\rightarrow(X,\mathcal{B},\mu)$ be a weakly
mixing ergodic automorphism. Assume that there exist an increasing
sequence $\{t_n\}$ of natural numbers, $0<\alpha\leq 1$,
$J\in\mathcal{J}(T)$ and a probability measure $P$ on $\Z$ such
that
$$T^{t_n}\rightarrow\alpha\sum_{k\in\Z}P(\{k\})T^k+(1-\alpha)J\text{ weakly in }\mathcal{L}(L^2(X,\mathcal{B},\mu)).$$
If $P$ is not Dirac then $T$ is disjoint from all ELF
automorphisms.
\end{proposition}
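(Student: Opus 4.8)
The plan is to work with joinings and their Markov operators. Fix an ELF automorphism $S$ of a standard probability space $(Y,\mathcal{C},\nu)$. Since the set $\mathcal{J}(T,S)$ of joinings is a Choquet simplex whose extreme points are the ergodic joinings, and since $\mu\times\nu$ is ergodic (because $T$ is weakly mixing and $S$ is ergodic, $T\times S$ is ergodic), it suffices to prove that every ergodic joining $\rho$ of $T$ and $S$ coincides with $\mu\times\nu$. I would encode $\rho$ by its Markov intertwining operator $\Phi=\Phi_{\rho}\colon L^{2}(Y,\nu)\to L^{2}(X,\mu)$: it is positivity preserving, $\Phi\mathbf{1}=\mathbf{1}$, $\Phi^{*}\mathbf{1}=\mathbf{1}$ and $T\Phi=\Phi S$. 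The assertion $\rho=\mu\times\nu$ is equivalent to $\Phi$ being the trivial operator $f\mapsto\bigl(\int f\,d\nu\bigr)\mathbf{1}$, i.e. to $\Phi$ annihilating $L^{2}_{0}(Y,\nu)$.

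Next I would extract the basic limit identity. The set of Markov operators on $L^{2}(\nu)$ is compact in the weak operator topology, so after passing to a subsequence we may assume $S^{t_{n}}\to Q$ weakly, where $Q$ is a Markov operator; as $Q$ commutes with $S$, it belongs to the weak closure of $\{S^{n}:n\in\N\}$, and the ELF property of $S$ forces $Q$ to be indecomposable. Passing $T^{t_{n}}\Phi=\Phi S^{t_{n}}$ to the limit (using boundedness of $\Phi$) and invoking the hypothesis on $T^{t_{n}}$ gives
\[
R\Phi=\Phi Q,\qquad R:=\alpha\sum_{k\in\Z}P(\{k\})\,T^{k}+(1-\alpha)J .
\]
Each $T^{k}\Phi$ and $J\Phi$, hence also $\Phi Q=R\Phi$, is again a Markov operator intertwining $S$ and $T$, and all of these map $L^{2}_{0}(\nu)$ into $L^{2}_{0}(\mu)$; iterating the identity yields $R^{m}\Phi=\Phi Q^{m}$ for every $m\geq1$.

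The core of the argument — and the step I expect to be the real obstacle — is to deduce that $\Phi$ is trivial from $R\Phi=\Phi Q$ together with the three hypotheses: $Q$ indecomposable, $P$ non-Dirac, and $T$ weakly mixing. First one reduces to the continuous part of $S$: if $f$ is an eigenfunction of $S$ then $\Phi f$ is an eigenfunction of $T$, hence constant by weak mixing, so $\Phi$ already vanishes on the pure-point part of $L^{2}_{0}(\nu)$. On the continuous part I would pass to the spectral (direct integral) decomposition of $T$ acting on $L^{2}_{0}(\mu)$; the maximal spectral type is continuous (weak mixing), and the key elementary fact is that $\widehat{P}(z)=\sum_{k}P(\{k\})z^{k}$ has modulus strictly below $1$ for every $z\in\cir$ outside the finite set of $d$-th roots of unity, where $d$ is the greatest common divisor of the differences of the support of $P$; here non-Diracness of $P$ enters, guaranteeing that $d$ is a genuine positive integer and that this exceptional set is null for the continuous spectral measure. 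Consequently $R$ acts fibrewise as a strict contraction on $L^{2}_{0}(\mu)$, so $R^{m}\Phi=\Phi Q^{m}$ forces $\Phi Q^{m}\to\mathbf{1}\otimes\mathbf{1}$ in the strong operator topology. The remaining and most delicate point is to exclude that $\Phi$ is nontrivial while all the mass it could transport is carried off to infinity by the high powers $Q^{m}$: this is exactly where indecomposability of $Q$ — equivalently, ergodicity of the self-joining $\lambda_{Q}$ of the ELF system $S$, together with control of its iterates — must be played against the fact that the spectral contraction above is only pointwise, not uniform. Resolving this interplay gives that $\Phi^{*}\Phi$ is trivial on $L^{2}_{0}(\nu)$, hence $\Phi$ is trivial; thus $\rho=\mu\times\nu$, every ergodic joining of $T$ and $S$ is the product, so $T$ is disjoint from $S$. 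As $S$ was an arbitrary ELF automorphism, the proposition follows.
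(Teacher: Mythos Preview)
The paper does not prove this proposition; it is quoted from \cite{De-Fr-Le-Pa} and used as a black box. So there is no in-paper argument to compare against, and the comments below concern only the internal soundness of your outline.

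Your opening moves are correct: reducing to an arbitrary ergodic $\rho\in J^{e}(T,S)$, passing to the intertwining Markov operator $\Phi$, extracting a subsequential limit $Q$ of $S^{t_n}$, noting that $Q$ is indecomposable by ELF, and obtaining $R\Phi=\Phi Q$ with $R=\alpha\sum_kP(\{k\})T^{k}+(1-\alpha)J$. But two steps are not established. First, the assertion that ``$R$ acts fibrewise as a strict contraction on $L^2_0(\mu)$'' is not right: only the polynomial part $\alpha\sum_kP(\{k\})T^{k}$ is diagonal in the spectral representation of $T$; the summand $(1-\alpha)J$ is an arbitrary intertwining Markov operator and need not respect the spectral resolution of $T$ at all. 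What survives is merely the pointwise strict inequality $\|Rf\|<\|f\|$ for nonzero $f\in L^2_0(\mu)$, and as you yourself observe, this does not force $R^{m}\to 0$ strongly.

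Second, and more seriously, the decisive step---deducing triviality of $\Phi$ from $R^{m}\Phi=\Phi Q^{m}$ together with indecomposability of $Q$---is flagged but not carried out; the sentence ``resolving this interplay gives'' is exactly where the proof is missing. The route that actually works is joining-theoretic rather than spectral: if $\rho\neq\mu\times\nu$, weak mixing of $T$ makes the joinings $(T^{k}\times\mathrm{Id})_{*}\rho$, $k\in\operatorname{supp}P$, pairwise distinct ergodic elements of $J(T,S)$, so the joining corresponding to $R\Phi$ has at least two distinct ergodic components of positive weight. One then has to argue that this is incompatible with that same joining arising as $\Phi Q$, i.e.\ as the composition of the ergodic $\rho$ with the \emph{ergodic} self-joining $\lambda_Q$ of $S$; this requires a genuine structural argument (carried out in \cite{De-Fr-Le-Pa}) about such compositions, and your spectral-contraction heuristic does not substitute for it.
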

\noindent It was shown in \cite{De-Fr-Le-Pa} that weakly mixing
simple but not mixing automorphisms and almost all interval
exchange transformations for some special irreducible permutations
are disjoint from ELF automorphisms. More precisely, if $\pi$ is
an irreducible permutation of $m$ elements such that
$\pi(i)+1\neq\pi(i+1)$ for $1\leq i\leq m-1$ and
\begin{equation}\label{waredu}
\pi(\pi^{-1}(1)-1)=\pi(1)-1\ \text{ or }\
\pi(\pi^{-1}(m)+1)=\pi(m)+1
\end{equation} then almost every
corresponding interval exchange transformation is disjoint from
all ELF systems. This class of permutation is in a sense marginal
because the corresponding IETs can be "reduced" to exchanges of
$m-1$ intervals. In this paper we will prove the disjointness of
almost every IET from ELF automorphisms (see
Theorem~\ref{koncowe}) for more interesting set of permutations
lying in the Rauzy class generated by the symmetric permutation
$$\tau_m^{sym}=\begin{pmatrix}
    1&2&\ldots&m-1&m\\
    m&m-1&\ldots&2&1\\
        \end{pmatrix},$$
where $m\geq 5$ is odd. As $\tau_3^{sym}$ fulfills (\ref{waredu}),
the disjointness for $m=3$ was proved in \cite{De-Fr-Le-Pa}.

The idea of the proof consists in finding some IETs  which have
segments of orbits with special code properties. For $m=5$ we deal
with IETs of periodic type. Since such IETs are isomorphic to
substitutional dynamical systems, we can use an approach developed
in \cite{Br-Fr} for substitutions for which one is an eigenvalue
of the associated matrix. It allows us to find the required
segments of orbits. In order to find appropriate IETs of periodic
type we use a method of searching some paths in Rauzy graphs
developed in \cite{Ma-Mo-Yo} and \cite{Si-Ul}. Moreover, we
introduce a procedure which helps us to reduce the problem of
finding appropriate IETs  to a smaller number of intervals. Since
the existence of the required segments of orbits is an open
condition, we can use standard technics introduced by Veech in
\cite{Ve4} to prove the existence of such segments for almost all
IETs in the Rauzy class of the permutation $\tau_m^{sym}$.
Finally, this allows us to apply Proposition~\ref{proprozaut}.

As a byproduct, we obtain (using Maple) concrete examples of IETs
of periodic type disjoint from ELF systems.

Using Proposition~\ref{proprozlflow}, for some irreducible
permutations we show that in the class of special flows built over
corresponding IETs and under roof functions constant over the
exchanged intervals almost every such flow is disjoint from weakly
mixing ELF flows (see Corollary~\ref{corflow}).

\section{Preliminaries}
\subsection{Joinings}
For background information on
the theory of joinings  we refer the reader to \cite{Gl} and
\cite{Rue}.

Let $T$ and $S$ be ergodic automorphisms of standard probability
Borel spaces $(X,\mathcal{B},\mu)$ and $(Y,\mathcal{C},\nu)$
respectively. By a \emph{joining} between $T$ and $S$ we mean any
$T\times S$-invariant probability measure $\rho$ on $(X\times Y,
\mathcal{B}\otimes\mathcal{C})$ whose projections on $X$ and $Y$
are equal to $\mu$ and $\nu$ respectively. If we consider flows
$\mathcal{T}=\{T_t\}_{t\in\mathbb{R}}$ and
$\mathcal{S}=\{S_t\}_{t\in\mathbb{R}}$, by a joining between
$\mathcal{T}$ and $\mathcal{S}$ we mean any probability
$\{T_t\times S_t\}_{t\in\mathbb{R}}$--invariant measure on
$(X\times Y,\mathcal{B}\otimes\mathcal{C})$ with the same
property. The set of joinings between automorphisms $T$ and $S$ is
denoted by $J(T,S)$. If the automorphism $T\times S$ on $(X\times
Y,\mathcal{B}\otimes\mathcal{C},\rho)$ is ergodic then the joining
$\rho$ is called ergodic. The set of ergodic joinings is denoted
by $J^e(T,S)$.

Any automorphism $R:(X,\mathcal{B},\mu)\to(X,\mathcal{B},\mu)$
determines a unitary operator on $L^2(X,\mathcal{B},\mu)$, still
denoted by $R$, by the formula $R(f)=f\circ R$. An operator
$\Phi:L^2(X,\mathcal{B},\mu)\rightarrow L^2(Y,\mathcal{C},\nu)$ is
called a \emph{Markov operator} if
$$\Phi(1)=\Phi^{*}(1)=1\text{ and }\Phi(f)\geq 0\text{ for all }f\geq 0.$$
Denote by $\mathcal{J}(S,T)$ the set of all Markov operators from
$L^2(X,\mathcal{B},\mu)$ to $L^2(Y,\mathcal{C},\nu)$ such that $
T\circ\Phi=\Phi\circ  S$. For $\rho\in J(S,T)$, define an operator
$\Phi_{\rho}:L^2(X,\mathcal{B},\mu)\rightarrow
L^2(Y,\mathcal{C},\nu)$ by the formula
\begin{equation} \label{markov}
\langle f,g\rangle_{L^{2}(X\times Y,\rho)}= \langle
\Phi_{\rho}(f),g\rangle_{L^{2}(Y,\nu)}
\end{equation}
for each $f\in L^2(X,\mathcal{B},\mu)$ and $g\in
L^2(Y,\mathcal{C},\nu)$. Then $\phi_{\rho}\in\mathcal{J }(T,S)$
and $J(T,S)\ni\rho\mapsto\phi_{\rho}\in\mathcal{J }(T,S)$
establishes a 1-1 affine correspondence between the sets
$\mathcal{J}(T,S)$ and $J(T,S)$. The set of intertwining Markov
operators $\mathcal{J}(S,T)$ is compact in the weak operator
topology, thus the set $J(S,T)$ is also compact (on $J(T,S)$ we
transport the topology of $\mathcal{J}(T,S)$). Markov operators
corresponding to ergodic joinings are called
\emph{indecomposable}.

Following \cite{Fu}, automorphisms $S$ and $T$ are called
\emph{disjoint} if $J(S,T)=\{\mu\otimes\nu\}$. Note that
$\Phi_{\mu\otimes\nu}f=\int_{X}fd\mu$.

If $S=T$ we will write $J(T)$, $J^e(T)$, $\mathcal{J}(T)$ and
$\mathcal{J}^e(T)$ instead of $J(T,T)$, $J^e(T,T)$,
$\mathcal{J}(T,T)$ and $\mathcal{J}^e(T,T)$ respectively, and the
elements of $J(T)$ are called  \emph{self-joinings} of $T$.

By $C(T)$ we denote the centralizer of $T$, that is the set of all
automorphisms of $(X,\mathcal{B},\mu)$ commuting with $T$. If
$R\in C(T)$ then the formula $\mu_R(A\times B)=\mu(A\cap R^{-1}B)$
determines an ergodic self-joining of $T$ supported on the graph
of $R$. Moreover, $\Phi_{\mu_R}=R$.

\begin{definition}
Let $T$ be an ergodic automorphism of a standard probability space
$(X,\mathcal{B},\mu)$. Following \cite{Fr-Le1}, we say that $T$ is
an \emph{ELF automorphism}, if $\overline{\{ {T}^n:
n\in\mathbb{Z}\}}\subset \mathcal{J}^e(T)$ (the closure is taken
in the weak operator topology). An ergodic flow
$\mathcal{T}=\{T_t\}_{t\in\mathbb{R}}$ on  $(X,\mathcal{B},\mu)$
is said to be an \emph{ELF flow} if $\overline{\{
{T_t}:t\in\mathbb{R}\}}\subset \mathcal{J}^e(\mathcal{T})$.
\end{definition}

\begin{lemma}\label{dojoi}
Let $\mathcal{T}=\{T_t\}_{t\in\mathbb{R}}$ be a measure-preserving
 flow on a standard probability space
$(X,\mathcal{B},\mu)$ and let $\Phi\in\mathcal{J}(\mathcal{T})$.
Suppose that $\{t_n\}$ is a real sequence such that
$t_n\to+\infty$ and the sequence of operators $\{T_{t_n}\}$
converges in the weak operator topology. For every $0<\alpha\leq
1$ the  following two statements are equivalent:
\begin{itemize}
\item[(i)] There exists $\Phi'\in\mathcal{J}(\mathcal{T})$ such that
$T_{t_n}\to\alpha\Phi+(1-\alpha)\Phi'\text{ weakly.}$
\item[(ii)]
For all $A,B\in\mathcal{B}$ we have
$\lim_{n\rightarrow\infty}\langle
T_{t_n}\chi_A,\chi_B\rangle\geq\alpha\langle
\Phi\chi_A,\chi_B\rangle.$
\end{itemize}
\end{lemma}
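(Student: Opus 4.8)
The plan is to prove the two implications separately; the implication (i)$\Rightarrow$(ii) is immediate while (ii)$\Rightarrow$(i) carries the content. For (i)$\Rightarrow$(ii) I would test the weak convergence $T_{t_n}\to\alpha\Phi+(1-\alpha)\Phi'$ against the pair $\chi_A,\chi_B$, obtaining
$$\lim_{n\to\infty}\langle T_{t_n}\chi_A,\chi_B\rangle=\alpha\langle\Phi\chi_A,\chi_B\rangle+(1-\alpha)\langle\Phi'\chi_A,\chi_B\rangle ;$$
since $\Phi'$ is a Markov operator and $\chi_A,\chi_B\geq 0$ one has $\langle\Phi'\chi_A,\chi_B\rangle\geq 0$, and as $1-\alpha\geq 0$ this gives the inequality in (ii).

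For (ii)$\Rightarrow$(i) I would set $\Psi:=\lim_n T_{t_n}$ in the weak operator topology (it exists by hypothesis). A routine check, which I would include, shows that a weak-operator limit of Markov operators commuting with $\mathcal{T}$ is again such an operator, so $\Psi\in\mathcal{J}(\mathcal{T})$; in particular $\mathcal{J}(\mathcal{T})$ is weakly closed. Assuming first $\alpha<1$, put $\Phi':=(1-\alpha)^{-1}(\Psi-\alpha\Phi)$. Then $T_t\Phi'=\Phi'T_t$ for every $t$, inherited from $\Psi$ and $\Phi$, and the normalizations $\Phi'(1)=(\Phi')^{*}(1)=1$ follow from $\Psi(1)=\Phi(1)=1$ and $\Psi^{*}(1)=\Phi^{*}(1)=1$. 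The one point requiring an argument is positivity of $\Phi'$. Using $\langle\Psi\chi_A,\chi_B\rangle=\lim_n\langle T_{t_n}\chi_A,\chi_B\rangle$, hypothesis (ii) says exactly that $\langle\Phi'\chi_A,\chi_B\rangle\geq 0$ for all $A,B\in\mathcal{B}$; extending this by linearity to nonnegative simple functions and then, using $L^2$-density of the simple functions together with continuity of $\Phi'$ and closedness of the positive cone, to all $f\geq 0$, I conclude that $\Phi'$ is a Markov operator, hence $\Phi'\in\mathcal{J}(\mathcal{T})$. By construction $\alpha\Phi+(1-\alpha)\Phi'=\Psi=\lim_n T_{t_n}$, which is exactly (i).

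It remains to dispose of the degenerate case $\alpha=1$. Applying the inequality in (ii) to the pairs $(\chi_A,\chi_B)$ and $(\chi_A,\chi_{B^c})$ and using $\Psi^{*}(1)=\Phi^{*}(1)=1$ to rewrite $\langle\Psi\chi_A,\chi_{B^c}\rangle=\mu(A)-\langle\Psi\chi_A,\chi_B\rangle$ (and likewise for $\Phi$) forces $\langle\Psi\chi_A,\chi_B\rangle=\langle\Phi\chi_A,\chi_B\rangle$ for all $A,B$, hence $\Psi=\Phi$, and (i) holds with $\Phi'=\Phi$. I do not anticipate a real obstacle: the only delicate step is the positivity of $\Phi'$, and hypothesis (ii) is tailored precisely so that this holds when tested against characteristic functions, after which the extension to arbitrary $f\geq 0$ is routine; the one thing to be careful about is recording that $\mathcal{J}(\mathcal{T})$ is weakly closed so that $\Psi$ and $\Phi'$ genuinely lie in it.
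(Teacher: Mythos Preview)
Your proof is correct. The implication (i)$\Rightarrow$(ii) is handled exactly as in the paper. For (ii)$\Rightarrow$(i) the paper simply invokes Lemma~3 of \cite{Fr-Le2}, whereas you supply a direct, self-contained argument: you identify the weak limit $\Psi\in\mathcal{J}(\mathcal{T})$, form $\Phi'=(1-\alpha)^{-1}(\Psi-\alpha\Phi)$, and verify that $\Phi'$ is Markov by testing positivity against characteristic functions (which is precisely what hypothesis~(ii) provides), with the boundary case $\alpha=1$ handled by the complement trick. Your argument is presumably what underlies the cited lemma; the payoff of writing it out is that the proof becomes self-contained and makes transparent exactly where the hypothesis on characteristic functions is used, at the modest cost of a few extra lines.
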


\begin{proof}
Suppose that  $T_{t_n}\to\alpha\Phi+(1-\alpha)\Phi'$ weakly for
some $\Phi'\in\mathcal{J}(\mathcal{T})$.  Then
\[\langle T_{t_n}\chi_A,\chi_B\rangle\to\alpha\langle
\Phi\chi_A,\chi_B\rangle+(1-\alpha)\langle
\Phi'\chi_A,\chi_B\rangle\] for all $A,B\in\mathcal{B}$. Since
$\Phi'\chi_A\geq 0$, we have $\langle
\Phi'\chi_A,\chi_B\rangle\geq 0$, which implies (ii).

The converse  follows directly from Lemma 3 in \cite{Fr-Le2}.
\end{proof}
\begin{remark}
Note that if  $\rho,\rho_0\in J(\mathcal{T})$ are self-joinings
such that $T_{t_n}\to\Phi_\rho$ weakly and $\Phi_{\rho_0}=\Phi$
then condition (ii) in Lemma~\ref{dojoi} is equivalent to
$\rho\geq\alpha\rho_0$.
\end{remark}
\begin{lemma}\label{alfy}
Let $\mathcal{T}=\{T_t\}_{t\in\mathbb{R}}$ be an ergodic flow on a
standard probability space $(X,\mathcal{B},\mu)$. Suppose that
there exist a real sequence $\{t_n\}$ with $t_n\to+\infty$,
$\theta\neq 0$, $0<\alpha_1,\alpha_2\leq 1$ and
$\Phi_1,\Phi_2\in\mathcal{J}(\mathcal{T})$ such that
\[T_{t_n}\to\alpha_1Id+(1-\alpha_1)\Phi_1\text{ and }T_{t_n}\to\alpha_2 T_\theta+(1-\alpha_2)\Phi_2\]
weakly in $\mathcal{L}(L^2(X,\mathcal{B},\mu))$. Then
$\alpha_1+\alpha_2\leq 1$ and there exists
$\Phi\in\mathcal{J}(\mathcal{T})$ such that
\[T_{t_n}\to\alpha_1Id+\alpha_2
T_\theta+(1-\alpha_1-\alpha_2)\Phi\text{ weakly in
}\mathcal{L}(L^2(X,\mathcal{B},\mu)).\]
\end{lemma}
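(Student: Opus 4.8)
The plan is to pass to the self-joining of $\mathcal{T}$ attached to the common weak limit of $\{T_{t_n}\}$, rephrase the two hypotheses as domination inequalities for that measure, and then use that the diagonal and the graph of $T_\theta$ lie over disjoint subsets of $X\times X$.

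Since $\mathcal{J}(\mathcal{T})$ is convex, weakly closed and contains $Id$, the operator $\Psi:=\lim_n T_{t_n}=\alpha_1 Id+(1-\alpha_1)\Phi_1=\alpha_2 T_\theta+(1-\alpha_2)\Phi_2$ lies in $\mathcal{J}(\mathcal{T})$, so $\Psi=\Phi_\rho$ for a unique $\rho\in J(\mathcal{T})$. Both $\mu_{Id}$, supported on the diagonal, and $\mu_{T_\theta}$, supported on the graph of $T_\theta$, are self-joinings of $\mathcal{T}$ (because $T_\theta$ commutes with every $T_t$), and $\Phi_{\mu_{Id}}=Id$, $\Phi_{\mu_{T_\theta}}=T_\theta$. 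Now the first hypothesis is exactly statement (i) of Lemma~\ref{dojoi} with $\Phi=Id$, $\alpha=\alpha_1$ and $\Phi'=\Phi_1$; hence statement (ii) of that lemma holds, and by the Remark following it (with $\rho_0=\mu_{Id}$) this says precisely $\rho\ge\alpha_1\mu_{Id}$. In the same way the second hypothesis gives $\rho\ge\alpha_2\mu_{T_\theta}$.

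Next I would show that $\mu_{Id}$ and $\mu_{T_\theta}$ are mutually singular. We may assume $T_\theta\neq Id$ as a transformation (automatic when $\mathcal{T}$ is aperiodic, in particular in the applications to special flows over interval exchange transformations); then $\{x:T_\theta x=x\}$ is $T_s$-invariant for every $s$, hence $\mu$-null by ergodicity, so the diagonal and the graph of $T_\theta$ are disjoint modulo a $\rho$-null set. The elementary fact that nonnegative measures $\nu_1\perp\nu_2$ with $\rho\ge\nu_1$ and $\rho\ge\nu_2$ satisfy $\rho\ge\nu_1+\nu_2$ (split any measurable set along a carrier of $\nu_1$) now upgrades the two inequalities to $\rho\ge\alpha_1\mu_{Id}+\alpha_2\mu_{T_\theta}$; evaluating on $X\times X$ gives $\alpha_1+\alpha_2\le 1$. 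If $\alpha_1+\alpha_2=1$ then $\rho=\alpha_1\mu_{Id}+\alpha_2\mu_{T_\theta}$ and any $\Phi\in\mathcal{J}(\mathcal{T})$ will do; if $\alpha_1+\alpha_2<1$, put $\rho':=(1-\alpha_1-\alpha_2)^{-1}\bigl(\rho-\alpha_1\mu_{Id}-\alpha_2\mu_{T_\theta}\bigr)$, a probability measure on $X\times X$ with both marginals equal to $\mu$ and invariant under every $T_t\times T_t$, so $\rho'\in J(\mathcal{T})$; set $\Phi:=\Phi_{\rho'}\in\mathcal{J}(\mathcal{T})$. Since $\rho\mapsto\Phi_\rho$ is affine, $\Psi=\alpha_1 Id+\alpha_2 T_\theta+(1-\alpha_1-\alpha_2)\Phi$, which is the claimed weak convergence because $T_{t_n}\to\Psi$. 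The only step that is not pure bookkeeping is the mutual singularity of $\mu_{Id}$ and $\mu_{T_\theta}$ (equivalently, excluding $T_\theta=Id$); the rest follows from Lemma~\ref{dojoi}, the affine correspondence between self-joinings and intertwining Markov operators, and the cited fact on mutually singular measures.
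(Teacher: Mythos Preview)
Your proof is correct and follows the paper's approach almost verbatim: pass to the limiting self-joining $\rho$, use Lemma~\ref{dojoi} (and its Remark) to get $\rho\ge\alpha_1\mu_{Id}$ and $\rho\ge\alpha_2\mu_{T_\theta}$, exploit the mutual singularity of the two graph measures (via ergodicity and $T_\theta\neq Id$) to add the inequalities, and deduce $\alpha_1+\alpha_2\le1$ together with the existence of $\Phi$. The only cosmetic difference is that the paper extracts $\Phi$ by a second application of Lemma~\ref{dojoi} to the operator corresponding to $(\alpha_1\mu_{Id}+\alpha_2\mu_{T_\theta})/(\alpha_1+\alpha_2)$, whereas you write down $\rho'=(1-\alpha_1-\alpha_2)^{-1}(\rho-\alpha_1\mu_{Id}-\alpha_2\mu_{T_\theta})$ explicitly; these are equivalent.
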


\begin{proof}
Let $\rho\in J(\mathcal{T})$ be a self-joining such that
$T_{t_n}\to\Phi_\rho$ weakly. By Lemma~\ref{dojoi},
$\rho\geq\alpha_1\mu_{Id}$ and $\rho\geq\alpha_2\mu_{T_{\theta}}$.
By the ergodicity of $\mathcal{T}$, $\mu(\{x\in X:T_\theta
x=x\})=0$, and hence the measures $\mu_{Id}$ and $\mu_{T_\theta}$
are orthogonal. It follows that
$$\rho\geq\alpha_1\mu_{Id}+\alpha_2\mu_{T_{\theta}}=(\alpha_1+\alpha_2)\frac{\alpha_1\mu_{Id}+\alpha_2\mu_{T_{\theta}}}{\alpha_1+\alpha_2}.$$
Since $\rho$ is probabilistic, we obtain $\alpha_1+\alpha_2\leq
1$. Now we apply Lemma~\ref{dojoi} to the operator associated with
the self-joining
$\frac{\alpha_1\mu_{Id}+\alpha_2\mu_{T_{\theta}}}{\alpha_1+\alpha_2}\in
J(\mathcal{T})$. This completes the proof.
\end{proof}

\subsection{Special flows}
Assume that $T$ is an ergodic automorphism of a standard
probability space $(X,\mathcal{B},\mu)$ and let $f\in
L^{1}(X,\mathcal{B},\mu)$ be a positive function. Denote by
$\lambda$ the Lebesgue measure on $\mathbb{R}$. Let
$X^f=\{(x,r)\in X\times\mathbb{R};\ 0\leq r<f(x)\}$ and let
$\mathcal{B}^f$ and $\mu^f$ be the restrictions of $\mathcal{
B}\otimes\mathcal{ B}(\R)$ and $\mu\otimes\lambda$ to $X^f$.
Denote by
$T^f=\{(T^f)_t\}_{t\in\mathbb{R}}:(X^f,\mathcal{B}^f,\mu^f)\rightarrow(X^f,\mathcal{B}^f,\mu^f)$
 the \emph{special flow} built over $T$ and under $f$.
The special flow moves each point in $X^f$ vertically at unit
speed and points $(x,f(x))$ and $(Tx,0)$ are identified. For any
$n\in\mathbb{Z}$ let
\begin{displaymath}
        f^{(n)}(x)=\left\{
        \begin{array}{cll}
        f(x)+\ldots+f(T^{n-1}x)& \hbox{ if } &   n>0, \\
        0  & \hbox{ if } &  n=0, \\
        -(f(T^nx)+\ldots+f(T^{-1}x))& \hbox{ if } &   n<0.
        \end{array}
        \right.
        \end{displaymath}
Consider the skew product
$S_{-f}:(X\times\mathbb{R},\mathcal{B}\otimes\mathcal{B}(\mathbb{R}),\mu\otimes\lambda)\rightarrow(X\times\mathbb{R},\mathcal{B}\otimes\mathcal{B}(\mathbb{R}),\mu\otimes\lambda)$
given by $S_{-f}(x,r)=(Tx,-f(x)+r).$ Thus
$(S_{-f})^k(x,r)=(T^kx,-f^{(k)}(x)+r)$ for each $k\in\mathbb{Z}.$
Moreover,
$$T^f_t(x,r)=(S_{-f})^k(x,r+t)=(T^kx,r+t-f^{(k)}(x)),$$
where $k\in \Z$ is given by $f^{(k)}(x)\leq r+t<f^{(k+1)}(x)$

If $f\equiv 1$, the special flow $T^f$ is called a
\emph{suspension flow}. For more information on special flows we
refer the reader to \cite{Co-F0-Si}.

To apply Propositions~\ref{proprozlflow} and \ref{proprozaut} we
will need the following result from \cite{Fr-Le2}.

\begin{proposition}\label{twozbdoc}
Let $(X,d)$ be a compact metric space. Let $\mathcal{B}$ stand for
the $\sigma$--algebra of  Borel subsets of $X$ and let $\mu$ be a
probability Borel measure on $X$. Suppose that $T:\xbm\to\xbm$ is
an ergodic measure--preserving automorphism and there exist an
increasing sequence of natural numbers $\{q_n\}$ and a sequence of
Borel sets $\{C_n\}$ such that
\[\mu(C_n)\to\alpha>0,\;\;\mu(C_n\triangle T^{-1}C_n)\to 0\;\;\mbox{ and }
\sup_{x\in C_n}d(x,T^{q_n}x)\to 0.\] Assume that $f\in L^2(X,\mu)$
a positive function bounded away from zero and $\{a_n\}$ is a
sequence of real numbers such that the sequence
$\left\{\int_{C_n}|f^{(q_n)}(x)-a_n|^2d\mu(x)\right\}$ is bounded.
Suppose that
\[\frac{1}{\mu(C_n)}\left((f^{(q_n)}-a_n)|_{C_n}\right)_*(\mu|_{C_n})\to P\]
weakly in  the set of probability Borel measures on $\R$.
 Then $\{(T^f)_{a_n}\}$ converges weakly to the operator
\[\alpha\int_{\mathbb{R}}(T^f)_{-t}\,dP(t)+(1-\alpha)J,\]
where $J\in \mathcal{J}(T^f)$.
\end{proposition}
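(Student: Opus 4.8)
The plan is to apply Lemma~\ref{dojoi} to the special flow $\mathcal{T}=T^f$ along the sequence $\{a_n\}$. First the reductions. Since $f\geq c>0$ and $\{q_n\}$ is an increasing sequence of natural numbers, $f^{(q_n)}\geq cq_n\to\infty$, which together with the $L^2$-boundedness of $(f^{(q_n)}-a_n)|_{C_n}$ forces $a_n\to+\infty$. Since $P$ is a probability measure and $T^f$ is strongly continuous, $\Phi:=\int_{\R}(T^f)_{-t}\,dP(t)$ (the integral in the strong operator topology) is a well-defined Markov operator commuting with $T^f$, i.e. $\Phi\in\mathcal{J}(T^f)$. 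By compactness of the space of Markov operators in the weak operator topology, passing to a subsequence we may assume $(T^f)_{a_n}\to\Psi$ weakly for some $\Psi\in\mathcal{J}(T^f)$; it then suffices, by Lemma~\ref{dojoi} and the Remark following it, to show that $\langle\Psi F,G\rangle\geq\alpha\,\langle\Phi F,G\rangle$ for all nonnegative $F,G\in L^2(X^f,\mu^f)$, for this gives $\Psi=\alpha\Phi+(1-\alpha)J$ with $J\in\mathcal{J}(T^f)$, which is the assertion. By a routine density argument it is enough to treat $F,G$ nonnegative, bounded, Lipschitz and vanishing where $f(x)>R$, for some finite $R$; fix such $F,G$ and put $\eta(x,t):=\int_0^{f(x)}F\big((T^f)_{-t}(x,r)\big)G(x,r)\,dr$, so that $\int_X\eta(x,t)\,d\mu(x)=\langle(T^f)_{-t}F,G\rangle$.

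Fix $M>0$ with $P(\{M\})=P(\{-M\})=0$ and set $\delta_n(x):=f^{(q_n)}(x)-a_n$, $C_n^M:=\{x\in C_n:|\delta_n(x)|\leq M\}$ and $E_n^M:=\{(x,r)\in X^f:x\in C_n^M,\ r<f(T^{q_n}x)\}$. On $E_n^M$ the flow identity gives
\[(T^f)_{a_n}(x,r)=(T^f)_{-\delta_n(x)}\big((T^f)_{f^{(q_n)}(x)}(x,r)\big)=(T^f)_{-\delta_n(x)}(T^{q_n}x,r),\]
and since $\sup_{x\in C_n}d(T^{q_n}x,x)\to0$ while $|\delta_n|\leq M$ on $C_n^M$, a continuity argument shows that $F\big((T^f)_{a_n}(x,r)\big)=F\big((T^f)_{-\delta_n(x)}(x,r)\big)+o_n(1)$ uniformly on $E_n^M$. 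Dropping the complement of $E_n^M$ (legitimate as $F,G\geq0$) and absorbing the difference between $E_n^M$ and the full tower over $C_n^M$ into $\|F\|_\infty\|G\|_\infty\int_{C_n}|f\circ T^{q_n}-f|\,d\mu$, which is $o_n(1)$ (see below), we obtain
\[\langle\Psi F,G\rangle=\lim_n\int_{X^f}F\big((T^f)_{a_n}z\big)G(z)\,d\mu^f(z)\ \geq\ \liminf_n\int_{C_n^M}\eta\big(x,\delta_n(x)\big)\,d\mu(x).\]
As $\int_{\{|t|\leq M\}}\langle(T^f)_{-t}F,G\rangle\,dP(t)\uparrow\langle\Phi F,G\rangle$ when $M\to\infty$ (monotone convergence, $F,G\geq0$), everything reduces to the \emph{decoupling estimate}
\[\int_{C_n^M}\eta\big(x,\delta_n(x)\big)\,d\mu(x)\ \longrightarrow\ \alpha\int_{\{|t|\leq M\}}\langle(T^f)_{-t}F,G\rangle\,dP(t)\qquad (n\to\infty).\]

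To prove the decoupling estimate I would exploit $\mu(C_n)\to\alpha$, $\mu(C_n\triangle T^{-1}C_n)\to0$ and the ergodicity of $T$. Iterating almost-invariance gives $\mu(T^jC_n\triangle C_n)\to0$ for fixed $j$. Approximating $f$ in $L^2$ by continuous functions and using $\sup_{x\in C_n}d(T^{q_n}x,x)\to0$, $\mu(T^{q_n}C_n)=\mu(C_n)$ and the Cauchy--Schwarz inequality, one gets $\int_{C_n}|f\circ T^{q_n}-f|\,d\mu\to0$; since $\delta_n(Tx)-\delta_n(x)=f(T^{q_n}x)-f(x)$, it follows (again by Cauchy--Schwarz together with $\mu(T^{-j}C_n\triangle C_n)\to0$) that $\int_{C_n}|\delta_n\circ T^{\pm i}-\delta_n|\,d\mu\to0$ for fixed $i$, and also that $\mu(T^iC_n^M\triangle C_n^M)\to0$ (here $P(\{\pm M\})=0$ is used). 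Consequently, after a change of variable, $\int_{C_n^M}\eta(T^ix,\delta_n(x))\,d\mu(x)=\int_{C_n^M}\eta(x,\delta_n(x))\,d\mu(x)+o_n(1)$ for each fixed $i$; averaging over $i<N$ thus replaces $\eta$ by its orbital average $\eta_N(x,t):=\frac1N\sum_{i<N}\eta(T^ix,t)$ at cost $o_n(1)$. By the von Neumann ergodic theorem, $\eta_N(\cdot,t)\to\int_X\eta(y,t)\,d\mu(y)=\langle(T^f)_{-t}F,G\rangle$ in $L^1(\mu)$, uniformly for $|t|\leq M$ (here one uses that $t\mapsto\eta(\cdot,t)$ is continuous from $[-M,M]$ into $L^2(\mu)$, so its range is compact and the ergodic averages converge uniformly on it). Inserting this and using $\tfrac1{\mu(C_n)}(\delta_n)_*(\mu|_{C_n})\to P$ with $\mu(C_n)\to\alpha$ (the bounded function $t\mapsto\langle(T^f)_{-t}F,G\rangle$ being continuous, in particular $P$-a.e.\ continuous) yields the decoupling estimate upon letting $n\to\infty$ and then $N\to\infty$.

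The main obstacle is precisely this decoupling estimate. The hypothesis controls only the \emph{unweighted} distribution of $f^{(q_n)}-a_n$ over $C_n$, whereas the target involves the spatial average over $X$ of the fibrewise quantity $\eta(x,t)$ integrated against $P(dt)$; one must therefore show that the value $f^{(q_n)}(x)-a_n$ becomes asymptotically independent of the position of $x$ along its $T$-orbit inside $C_n$, and this is exactly where the near-invariance $\mu(C_n\triangle T^{-1}C_n)\to0$ is used, through the ergodic theorem. A secondary technical nuisance is that $f$ is merely in $L^2$ (and possibly unbounded), which forces the $L^2$-approximation and the Cauchy--Schwarz bookkeeping above, the crucial point being that $\mu(T^{q_n}C_n)=\mu(C_n)$ exactly, although no rate is available for $\mu(C_n\triangle T^{-q_n}C_n)$.
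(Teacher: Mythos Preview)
The paper does not prove this proposition at all: it is quoted verbatim from \cite{Fr-Le2} and used as a black box, so there is no ``paper's proof'' to compare against. Your overall architecture---pass to a weak limit $\Psi$, reduce via Lemma~\ref{dojoi} to $\langle\Psi F,G\rangle\ge\alpha\langle\Phi F,G\rangle$ for nonnegative $F,G$, localise to the tower over $C_n^M$, identify $(T^f)_{a_n}(x,r)=(T^f)_{-\delta_n(x)}(T^{q_n}x,r)$, and then decouple via almost-invariance of $C_n$ and the ergodic theorem---is the right one and matches what one finds in \cite{Fr-Le2}.

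There is, however, a genuine gap in your ``continuity argument''. You claim that $F\big((T^f)_{-\delta_n(x)}(T^{q_n}x,r)\big)=F\big((T^f)_{-\delta_n(x)}(x,r)\big)+o_n(1)$ \emph{uniformly} on $E_n^M$, appealing only to $\sup_{x\in C_n}d(T^{q_n}x,x)\to0$ and $|\delta_n|\le M$. But neither $T$ nor $f$ is assumed continuous, so the special flow $(T^f)_{-t}$ is not continuous in the base variable: applying $(T^f)_{-t}$ with $|t|\le M$ to $(T^{q_n}x,r)$ involves the values $f(T^{q_n+j}x)$ for $|j|\le M/c+1$, and these need not be uniformly close to $f(T^jx)$; likewise the base coordinate lands at $T^{q_n+j}x$, which is close to $T^jx$ only if $T^jx\in C_n$. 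The statement you need is therefore not uniform but \emph{in measure}: outside a set of $\mu^f$-measure $o_n(1)$ the two expressions agree up to $o_n(1)$. The ingredients for this are already in your write-up---$\mu(C_n\triangle T^{-j}C_n)\to0$ for each fixed $j$ handles the base coordinate, and your $L^1$-estimate $\int_{C_n}|f\circ T^{q_n}-f|\,d\mu\to0$ (iterated to $\int_{C_n}|f\circ T^{q_n+j}-f\circ T^j|\,d\mu\to0$) handles the floor-crossing count---but you must actually excise the bad set rather than assert uniformity. A cleaner alternative is to change variables $y=T^{q_n}x$ \emph{before} invoking continuity: then only $G(T^{-q_n}y,r)\approx G(y,r)$ is needed, which is genuinely uniform once $G$ is Lipschitz in the base variable, and the decoupling argument transfers verbatim to $T^{q_n}C_n^M$ and $\tilde\delta_n=\delta_n\circ T^{-q_n}$, whose distribution coincides with that of $\delta_n$ on $C_n^M$.
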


\subsection{Substitutions}\label{rekurencyjne}
Let us consider a finite alphabet $A=\{0,\ldots,r-1\}$  and let
$A^{*}$ stand for the set of nonempty finite words over $A$. Each
map $\sigma:A\rightarrow A^{*}$ is called a \emph{substitution}.
The map $\sigma$ can be extended to $\sigma:A^{*}\rightarrow
A^{*}$, $\sigma:A^{\mathbb{N}}\rightarrow A^{\mathbb{N}}$ and
$\sigma:A^{\mathbb{Z}}\rightarrow A^{\mathbb{Z}}$ by taking
concatenations. The \emph{associated matrix} with the substitution
$\sigma$ is the matrix $M=[m_{ij}]_{0\leq i,j\leq r-1}$, where
$m_{i,j}$ is the number of occurrences of the symbol $i$ in the
word $\sigma(j)$. A substitution $\sigma$ is called
\emph{primitive}, if there exists $n\in\mathbb{N}$ such that all
entries of $M^{n}$ are strictly positive. Perron-Frobenius theorem
states that for each primitive matrix, there exists an eigenavalue
$\theta>0$, which is greater than the absolute value of any other
eigenvalue of $M$. Moreover, there exist left and  right
eigenvectors associated with $\theta$ with positive entries.

The space $A^{\mathbb{Z}}$ is endowed with the metric
$$d(x,y)=\frac{1}{1+\inf\{|k|; x_k\neq y_k\}}$$
for each $x,y\in A^{\mathbb{Z}}$. For any primitive substitution
$\sigma$ there is at least one sequence $u\in A^{\Z}$ such that
$u=\sigma^k(u)$ for some $k\geq 1$. If the sequence $u$ is not
periodic, the substitution $\sigma$ is called \emph{aperiodic}.
Let $S:A^{\Z}\rightarrow A^{\Z}$ stand for the left shift defined
by $(Sx)_n=x_{n+1}$. Let $X_{\sigma}=\overline{\{S^{n}u:\
n\in\mathbb{Z}\}}$. Denote by $\mathcal{L}_{\sigma}\subset A^*$
the language which contains of all finite words which occur in the
sequence $u$. Therefore $\{x_n\}_{n\in\Z}\in X_{\sigma}$ if and
only if $x_n\ldots x_{n+k}\in\mathcal{L}_{\sigma}$ for all
$n\in\Z$ and $k\geq 0$. Since $X_{\sigma}$ is $S$-invariant, we
can consider the restriction of $S$ to $X_{\sigma}$. This
topological dynamical system will be denoted by
$S_{\sigma}:X_{\sigma}\to X_{\sigma}$ and is called a
\emph{substitution dynamical system}.
 If $\sigma$
is primitive and aperiodic then the dynamical system
$(X_{\sigma},S_\sigma)$ is minimal and uniquely ergodic (see
\cite{Qu}). Denote by $\mu_\sigma$ the unique $S_\sigma$-invariant
probability measure.

For any  $\bar{w}=w_0\ldots w_{m-1}\in A^*$, the vector
$l(\bar{w})=(l_0(\bar{w}),\ldots,l_{r-1}(\bar{w}))$ with
$l_i(\bar{w})=\#\{j:w_j=i,0\leq j\leq m-1\}$ for any $0\leq i\leq
r-1$, is called the \emph{population vector} of the word
$\bar{w}$. Following \cite{Cl-Sa}, a finite word
$\bar{w}=w_0\ldots w_{k-1}\in \mathcal{L}_{\sigma}$ is called a
\emph{recurrence word} of $\sigma$ if $\bar{w}w_0=w_0\ldots
w_{k-1}w_0\in \mathcal{L}_{\sigma}$.

\section{Interval exchange transformations}
\subsection{Introduction}
Let $m\geq 2$ be a fixed natural number and let
$$\R_+^m=\{\lambda=(\lambda_1,\ldots,\lambda_m)\in\mathbb{R}^m\setminus\{0\}:\ \lambda_i\geq 0,\ 1\leq i\leq m\}.$$
Denote by $S_m$ the set of all permutations of $\{1,\ldots,m\}$. A
permutation $\pi$ is called \emph{irreducible}, if
$\pi\{1,\ldots,k\}\neq\{1,\ldots,k\}$ for any $1\leq k\leq m-1$.
The set of all irreducible permutations is denoted by $S_m^0$.
Given $(\lambda,\pi)\in\R_+^m\times S^0_m$ let
$T=T_{\lambda,\pi}:[0,|\lambda|)\rightarrow[0,|\lambda|)$
($|\lambda|=\sum_{i=1}^m\lambda_i$) stand for  the \emph{interval
exchange transformation (IET)} of $m$ intervals
$\Delta_j=\left[\sum_{i=1}^{j-1}\lambda_{i},\sum_{i=1}^j\lambda_i\right)$,
$j=1,\ldots,m$, which are rearranged according to the permutation
$\pi$. Denote by $\mu$ the restriction of the Lebesgue measure on
$\R$ to the interval $[0,|\lambda|)$. Then
$T:([0,|\lambda|),\mu)\rightarrow([0,|\lambda|),\mu)$ is a
measure--preserving automorphism. Note that $T^{-1}$ is also an
IET of intervals
$T\Delta_{\pi^{-1}(1)},\ldots,T\Delta_{\pi^{-1}(m)}$.

Denote by $\beta_0,\ldots,\beta_{m-1}$  the left endpoints of the
intervals $\Delta_1,\ldots,\Delta_m$. Let $Orb(x)=\{T^nx:\
n\in\mathbb{Z}\}$ stand for the orbit of point $x$. Following
\cite{Ke}, we say that $T$ satisfies the \emph{infinite distinct
orbit condition (IDOC)}, if every orbit of $\beta_s$, $1\leq s\leq
m-1$, is infinite and $Orb(\beta_s)\cap Orb(\beta_t)=\emptyset$
for all $1\leq s\neq t\leq m-1$. Recall that (see \cite{Ke}) if
$\pi\in S_m^0$ then $T_{\lambda,\pi}$ satisfies the IDOC for a.e.
$\lambda\in\mathbb{R}_{+}^{m}$. Moreover, every IET satisfying the
IDOC is minimal, i.e.\ every orbit is dense (see \cite{Ke}).

For any subinterval $Z\subset[0,|\lambda|)$ we can define the
induced IET $T_Z:Z\rightarrow Z$ by the formula
$T_Z(x)=T^{k(x)}(x)$ for any $x\in Z$, where $k(x)\in\N$ is the
first positive return time of $x$ to $Z$. The induced map $T_Z$ is
an exchange of at most $m+2$ intervals (see e.g.\
\cite{Co-F0-Si}).

Let $d(\lambda,\lambda')=\sum_{i=1}^{m}|\lambda_i-\lambda_i'|$ for
all $\lambda,\lambda'\in\mathbb{R}^m_{+}$. We will consider on
$\R^m_+\times S_m^0$ the metric
$$\bar{d}((\lambda,\pi),(\lambda',\pi'))=d(\lambda,\lambda')+\delta_{\pi,\pi'}.$$

\subsection{Rauzy induction} Suppose that
$T=T_{\lambda,\pi}:[0,|\lambda|)\rightarrow[0,|\lambda|)$ is an
IET given by $\lambda=(\lambda_1,\ldots,\lambda_m)\in\R_+^m$ and
permutation $\pi\in S_m^0$. Assume that $T_{\lambda,\pi}$
satisfies the IDOC. By \emph{Rauzy induction} of IETs we mean a
special kind of induction, where
$Z:=[0,|\lambda|-\min\{\lambda_m,\lambda_{\pi^{-1}(m)}\})$. By
IDOC, $\lambda_m\neq\lambda_{\pi^{-1}(m)}$. Then $T_Z$ is again an
exchange of $m$ intervals and we can write
$T_Z=T_{\lambda',\pi'}$, where $(\lambda',\pi')\in\R^m_+\times
S_m^0$. Let us consider two  maps $a,b:S_m^0\rightarrow S_m^0$
\[ a\pi(i)=\left\{
        \begin{array}{ll}
        \pi(i), & i\leq\pi^{-1}(m) \\
        \pi(m), & i=\pi^{-1}(m)+1, \\
        \pi(i-1), & i>\pi^{-1}(m)+1 \\
        \end{array}
        \right.
        b\pi(i)=\left\{
        \begin{array}{ll}
        \pi(i), & \pi(i)\leq\pi(m) \\
        \pi(i)+1, & \pi(m)<\pi(i)<m \\
        \pi(m)+1, & \pi(i)=m. \\
        \end{array}
        \right.\]
and two $m\times m$ matrices
$$A(a,\pi)=
\begin{pmatrix}
1 & 0 & \ldots & 0  & 0 &0 &\ldots&0\\
0 & 1 & \ldots & 0  & 0 &0 &\ldots&0\\
\vdots & \vdots & \ddots & \vdots & \vdots & \vdots& \ddots&\vdots\\
0 & 0 & \ldots & 1  & 1 &0 &\ldots&0\\
0 & 0 & \ldots & 0  & 0 &1 &\ldots&0\\
\vdots & \vdots & \ddots & \vdots & \vdots & \vdots&\ddots &\vdots\\
0 & 0 & \ldots & 0  & 0 &0 &\ldots&1\\
0 & 0 & \ldots & 0  & 1 &0 &\ldots&0\\
\end{pmatrix}
\begin{matrix}
\\ \\ \leftarrow\pi^{-1}(m)\\ \\ \\ \\
\end{matrix}
$$
$$A(b,\pi)=\begin{matrix}\begin{matrix}&&&&&\end{matrix}\\\begin{pmatrix}
1&&\ldots&&0\\
\vdots&&\ddots&&\vdots\\
0&\ldots&0\,1\,0&\ldots&1\\
\end{pmatrix}.\\
\begin{matrix}&&\stackrel{\uparrow}{\pi^{-1}(m)}&&\end{matrix}
\end{matrix}$$
The induced IET is then given by
$(\lambda',\pi')=(A(c,\pi)^{-1}\lambda,c(\pi))$, where
\[c=c(\lambda,\pi)=\left\{\begin{matrix}a&\text{ if }&\lambda_m<\lambda_{\pi^{-1}(m)}\\
b&\text{ if
}&\lambda_m>\lambda_{\pi^{-1}(m)}.\end{matrix}\right.\] By IDOC,
we can define Rauzy induction for the IET given by
$(\lambda',\pi')$. In fact, this procedure can be continued for
any finite number of steps.  Let
$\Lambda_m=\{\lambda\in\R_+^{m}:|\lambda|=1\}$. Define maps
$$J:\R_+^{m}\times S_m^0\rightarrow\R_+^{m}\times S_m^0,\ J(\lambda,\pi)=(A^{-1}(c,\pi)\lambda,c\pi),\ c=c(\lambda,\pi)$$
and
$$P:\Lambda_m\times S_m^0\rightarrow\Lambda_m\times S_m^0,\ P(\lambda,\pi)=\left(\frac{A^{-1}(c,\pi)\lambda}{|A^{-1}(c,\pi)\lambda|},c\pi\right),\
c=c(\lambda,\pi).$$ If $(\lambda',\pi'):=J^n(\lambda,\pi)$ then
$$\pi'=\pi_n,\ \lambda'=A(c_n,\pi_{n-1})^{-1}A(c_{n-1},\pi_{n-2})^{-1}\ldots A(c_2,\pi_1)^{-1}A(c_1,\pi)^{-1}\lambda,$$
where $$c_k=c_k(\lambda,\pi)=c(J^{k-1}(\lambda,\pi)),\
\pi_k=c_k\circ\ldots\circ c_1\circ\pi.$$ Hence
$$\lambda=A^{(n)}(\lambda,\pi)\cdot\lambda',\text{ where }
A^{(n)}(\lambda,\pi)=A(c_1,\pi)\ldots A(c_n,\pi_{n-1}).$$

By \emph{Rauzy graph} we mean a directed graph  whose vertices are
irreducible permutations and edges connect permutations obtained
one from the other by applying maps $a$ or $b$ and are labeled
according to the type, $a$ or $b$ respectively. Any connected
component of the Rauzy graph is called a \emph{Rauzy class}.
Denote by $\mathcal{R}(\pi)\subset S_m^0$ the Rauzy class
containing the permutation $\pi\in S_m^0$. See \cite{Ra},
\cite{Ve3}, \cite{Ve4} for more details.

\begin{theorem}[\cite{Ve3}]\label{tw_veech_o_mierze_na_klasie} Let $\mathcal{R}\subset S_m^0$
be a fixed Rauzy class. There exists on $\Lambda_m\times
\mathcal{R}$ a smooth positive $\sigma$-finite $P$-invariant
measure $\kappa$, which is ergodic and conservative with respect
to $P$ and which is equivalent to the Lebesgue measure.
\end{theorem}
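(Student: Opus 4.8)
The plan is to realise $(\Lambda_m\times\mathcal R,P)$ as a piecewise real-analytic expanding Markov map, to build its invariant measure via a Zorich-type acceleration followed by an inducing (Kac) construction, and to read off the remaining properties from standard distortion estimates, the essential input being the combinatorial structure of the Rauzy class $\mathcal R$. The Markov structure is the following: $P$ has finitely many branches, indexed by pairs (a permutation $\pi\in\mathcal R$, a type $c\in\{a,b\}$): on the half-simplex $\{\bar\lambda:\bar\lambda_m<\bar\lambda_{\pi^{-1}(m)}\}$ (type $a$) or $\{\bar\lambda_m>\bar\lambda_{\pi^{-1}(m)}\}$ (type $b$) over $\pi$, $P$ is the projective map $\bar\lambda\mapsto A^{-1}(c,\pi)\bar\lambda/|A^{-1}(c,\pi)\bar\lambda|$, a bijection onto the full simplex over $c\pi$. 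Thus $P$ is a full-branch Markov map whose transition graph is the Rauzy graph of $\mathcal R$, and it is the factor of the $1$-homogeneous map $J$ on the cone $\R_+^m\times\mathcal R$ by the dilation action; finding a $P$-invariant $\kappa$ on $\Lambda_m\times\mathcal R$ equivalent to Lebesgue amounts to finding a $J$-invariant, dilation-invariant measure on the cone equivalent to Lebesgue, i.e.\ one with $(-m)$-homogeneous density. Crucially $P$ is \emph{not} uniformly expanding: for each $\pi$ fixed by $a$ (resp.\ $b$) the corresponding branch has a parabolic fixed point at a vertex of the simplex — for $m=2$ this is the Farey map, parabolic at both endpoints of $(0,1)$ — and this is exactly why the invariant measure will be only $\sigma$-finite.

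To construct $\kappa$ I would pass to the Zorich acceleration $\widehat P$, the first-return map of $P$ to a region $Y$ bounded away from the parabolic vertices (equivalently the $\gcd$-accelerated Rauzy–Zorich map), and show that $\widehat P$ is a uniformly expanding full-branch Markov map with bounded distortion. The key step is Veech's combinatorial estimate: along $P$-orbits the transition matrices $A^{(n)}(\bar\lambda,\pi)=A(c_1,\pi)A(c_2,\pi_1)\cdots A(c_n,\pi_{n-1})$ become \emph{strictly positive} after a number of steps that is uniformly controlled on large sets, because in a Rauzy class every vertex is reached by admissible $a/b$-paths and the concatenation of enough such paths yields a positive matrix; positivity of the transition matrix of a branch gives, via Perron–Frobenius and projective contraction of the Hilbert metric, the required uniform distortion bound for that branch of $\widehat P$. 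Hence $\widehat P$ has a unique absolutely continuous invariant probability $\widehat\kappa$ with density bounded between two positive constants, and since the branches are real-analytic the density is real-analytic on the interior of each simplex, by uniform convergence of the iterates of the Ruelle operator. One then spreads $\widehat\kappa$ along the $P$-orbit by the Kac formula, $\kappa(A)=\sum_{j\ge 0}\widehat\kappa(\{y\in Y:r_Y(y)>j,\ P^jy\in A\})$ with $r_Y$ the first-return time to $Y$; because the branches of $P$ are nonsingular for Lebesgue measure and $\widehat\kappa\sim\mathrm{Leb}$, the resulting $\kappa$ is $P$-invariant, equivalent to Lebesgue, and smooth. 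It is $\sigma$-finite, and in fact of infinite total mass, since $\int_Y r_Y\,d\widehat\kappa=\infty$: near a parabolic vertex the return time to $Y$ grows like the reciprocal of the distance to the vertex, exactly as for the Farey map, and this is non-integrable.

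Ergodicity and conservativity then come from the base. Since $\widehat P$ preserves a probability it is conservative, and a Kac extension of a conservative system is conservative, so $P$ is conservative — here one uses that a.e.\ point of $\Lambda_m\times\mathcal R$ eventually enters $Y$, the set of points staying outside $Y$ forever being Lebesgue-null (again as for the Farey map, where every point leaves a neighbourhood of an endpoint in finitely many steps). Ergodicity of $\widehat P$ holds because it is a Gibbs–Markov map whose transition graph is strongly connected — the Rauzy graph of a Rauzy class is strongly connected — and whose branches are jointly non-degenerate, so $\widehat P$ is exact, hence ergodic; ergodicity then passes to the Kac extension $P$. Combined with the equivalence to Lebesgue already established, this gives the full statement.

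The step I expect to be the main obstacle is the distortion estimate for $\widehat P$, i.e.\ the combinatorial lemma that Rauzy–Veech induction produces uniformly positive transition matrices. This is exactly where the hypothesis that $\mathcal R$ is a genuine Rauzy class — a strongly connected component of the Rauzy graph, so that all admissible codings are realised — is indispensable: without it one obtains neither the bounded distortion that makes the acceleration work nor the strong connectivity that yields ergodicity. The remaining points — verifying $\int_Y r_Y\,d\widehat\kappa=\infty$ so that $\kappa$ is genuinely $\sigma$-finite rather than finite, and the smoothness of the spread-out density — are more routine but require some care in the parabolic normal form at the vertices of the simplex.
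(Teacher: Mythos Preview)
The paper does not prove this theorem at all: it is stated as a citation of Veech's result from \cite{Ve3} and is used as a black box throughout. There is therefore no ``paper's own proof'' to compare against.

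Your outline is a coherent modern route to the theorem, essentially the Zorich--Avila--Gou\"ezel--Yoccoz approach: accelerate $P$ to kill the parabolic behaviour, obtain a Gibbs--Markov map with an a.c.i.p., then push back via Kac. This is \emph{not} how Veech originally proved it in \cite{Ve3}. Veech's argument is more direct and does not go through an acceleration: he builds the natural extension of $P$ as a suspension (the space of ``zippered rectangles''), identifies an explicit Lebesgue-class measure there that is invariant under the lift of $J$, and projects it down; the density of $\kappa$ is obtained in closed form as an integral over the suspension fibres, and conservativity/ergodicity are read off from the structure of that extension (ultimately from the Teichm\"uller flow). What your approach buys is a clearer dynamical picture and a finite-measure accelerated system to work with; what Veech's buys is an explicit formula for the density and no need for the distortion/positivity machinery you flag as the main obstacle.

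Two small cautions about your sketch. First, ``Zorich acceleration'' and ``first return to a set $Y$ bounded away from the parabolic vertices'' are not the same construction: Zorich groups consecutive Rauzy steps of the same type, which is a first return to the set where the type switches, not to a metric neighbourhood complement; you should be precise about which induced map you actually control. Second, the claim that transition matrices become positive ``after a number of steps uniformly controlled on large sets'' is not quite the right formulation---positivity is not uniform in the step count, and the distortion bound for the accelerated map comes instead from the fact that each accelerated step has a matrix of a restricted shape; the genuine positivity/contraction input enters only when one induces further (as in Avila--Gou\"ezel--Yoccoz).
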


\begin{theorem}[\cite{Ma}, \cite{Ve3}]\label{tw_veech_o_mierze}
If $\pi\in S_m^0$ then for a.e. $\lambda\in\R_+^m$ the IET
$T_{\lambda,\pi}$ is uniquely ergodic.
\end{theorem}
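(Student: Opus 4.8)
The plan is to follow Veech's renormalization argument, using precisely the ergodicity of the induction map $P$ supplied by Theorem~\ref{tw_veech_o_mierze_na_klasie}; Masur's alternative route, through recurrence of the Teichm\"uller geodesic flow, I would keep only as a fallback. Fix an irreducible $\pi$. Since $\{\lambda:T_{\lambda,\pi}\text{ fails the IDOC}\}$ is Lebesgue-null, I may assume the IDOC, so that $T_{\lambda,\pi}$ is minimal and $J^n(\lambda,\pi)=(\lambda^{(n)},\pi_n)$ is defined for all $n$. The first step is to reduce unique ergodicity to a statement about the products $A^{(n)}=A^{(n)}(\lambda,\pi)$. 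If $\nu$ is any $T_{\lambda,\pi}$-invariant probability measure, then, exactly as $\lambda=A^{(n)}\lambda^{(n)}$ holds for Lebesgue measure, the vector $v^{(n)}=(\nu(\Delta^{(n)}_1),\dots,\nu(\Delta^{(n)}_m))$ satisfies $v^{(0)}=A^{(n)}v^{(n)}$ with $0\neq v^{(n)}\in\R^m_+$ (each floor of a level-$n$ tower lies in a single level-$0$ interval and $\nu$ is $T$-invariant). Hence $v^{(0)}$ lies in the nested decreasing family of cones $A^{(n)}\R^m_+$ (the matrices $A(a,\pi),A(b,\pi)$ being nonnegative), so it is enough to prove that for a.e.\ $\lambda$ the intersection $\bigcap_n A^{(n)}\R^m_+$ is a single ray. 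Granting this, $v^{(0)}$ is proportional to $\lambda$; normalization via $\sum_i\nu(\Delta_i)=1$ forces $v^{(0)}=\lambda/|\lambda|$, and then invertibility of $A^{(n)}$ gives $\nu(\Delta^{(n)}_j)=\mu(\Delta^{(n)}_j)/|\lambda|$ for every $n,j$; since the level-$n$ dynamical partitions refine each other and have mesh at most $\max_j\lambda^{(n)}_j\to0$, they generate the Borel $\sigma$-algebra, whence $\nu=\mu/|\lambda|$.

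To collapse $\bigcap_n A^{(n)}\R^m_+$ to a ray I would use the Hilbert projective metric on $\R^m_+$ together with Birkhoff's theorem: a strictly positive matrix contracts it with coefficient $\tau<1$, a nonnegative matrix never expands it, and these coefficients are submultiplicative. The combinatorial input I need is that every Rauzy class is \emph{positively connected}: for each $\pi'$ in the class there is a loop in the Rauzy graph based at $\pi'$ along which the product of Rauzy matrices is strictly positive. The set $C$ of those $(\lambda,\pi)$ whose first few renormalization steps traverse such a distinguished positive loop at $\pi$ is, in each fiber, a nonempty relatively open set, hence has positive $\kappa$-measure. Since $\kappa$ is equivalent to Lebesgue and $P$ is conservative and ergodic with respect to $\kappa$, for a.e.\ $\lambda$ the $P$-orbit of $(\lambda,\pi)$ enters $C$ at infinitely many times $n_1<n_2<\cdots$; at each of these the corresponding block of Rauzy matrices multiplies to a positive matrix, so for large $n$ the factorization of $A^{(n)}$ contains arbitrarily many positive blocks. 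By submultiplicativity of the Birkhoff coefficient the Hilbert diameter of $A^{(n)}\R^m_+$ then decays geometrically in the number of positive blocks already traversed, hence tends to $0$, and $\bigcap_n A^{(n)}\R^m_+$ is a single ray for a.e.\ $\lambda$.

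I expect the real obstacle to be the purely combinatorial positive-connectivity statement — exhibiting, for each irreducible $\pi'$, a loop in the Rauzy graph whose cumulative matrix has all entries strictly positive. This is where the explicit form of the maps $a,b$ and of the matrices $A(a,\pi),A(b,\pi)$ must be exploited, presumably by an induction on $m$ or by a direct path construction. Everything else is comparatively soft; the only delicate point in the ergodic-theoretic part is that $\kappa$ is merely $\sigma$-finite, so recurrence to $C$ must be deduced from conservativity rather than from a finite-measure Poincar\'e argument. I would also need the auxiliary fact that $\max_j\lambda^{(n)}_j\to0$ for a.e.\ $\lambda$, which I would again read off from the ergodicity of $P$ applied to a set on which the total length drops by a definite proportion.
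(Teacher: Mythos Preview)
The paper does not prove this theorem; it is stated with a citation to Masur \cite{Ma} and Veech \cite{Ve3} and used as a black box. So there is no ``paper's own proof'' to compare against.

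That said, your outline is essentially Veech's original argument and is correct in structure. A few small remarks. First, you do not need a positive loop based at \emph{every} vertex of the Rauzy class; one positive loop based at a single $\pi'\in\mathcal{R}(\pi)$ already gives an open (hence positive-$\kappa$-measure) set $C\subset\Lambda_m\times\{\pi'\}$, and ergodicity plus conservativity of $P$ on the whole class $\Lambda_m\times\mathcal{R}(\pi)$ then forces a.e.\ orbit to visit $C$ infinitely often. This slightly simplifies the combinatorial step you flagged as the hard part (and indeed the existence of such a loop is the content of the Marmi--Moussa--Yoccoz construction cited in the paper as \cite{Ma-Mo-Yo}). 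Second, the auxiliary fact $\max_j\lambda^{(n)}_j\to 0$ does not require ergodicity of $P$: it follows for every IDOC $\lambda$ directly from Keane's argument, since a lower bound on $|\lambda^{(n)}|$ would force the Rauzy path to be eventually constant ($a^\infty$ or $b^\infty$), contradicting IDOC. Everything else --- the identification of invariant measures with the nested cones $A^{(n)}\R^m_+$, the Hilbert-metric contraction under positive blocks, and the passage from ``single ray'' to $\nu=\mu/|\lambda|$ via the generating tower partitions --- is exactly as in Veech.
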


\begin{notation}
Let $T_{\lambda,\pi}:[0,|\lambda|)\to[0,|\lambda|)$ be an IET.
Each point $x\in[0,|\lambda|)$ is coded by a sequence
$\{w_n\}_{n\in\Z}\in\{1,\ldots,m\}^\Z$ so that
$T^nx\in\Delta_{w_n}$ for $n\in\Z$. Denote by
$\mathcal{L}(T_{\lambda,\pi})\subset\{1,\ldots,m\}^*$ the language
determined by all such sequences, i.e.\
$\mathcal{L}(T_{\lambda,\pi})$ is the set of all finite words
(over the alphabet $\{1,\ldots,m\}$) which occur in such
sequences. Therefore $\bar{w}=w_0\ldots
w_{k-1}\in\mathcal{L}(T_{\lambda,\pi})$ if and only if  the set
$\bigcap_{j=0}^{k-1}T_{\lambda,\pi}^{-j}\Delta_{w_j}$ is not
empty. A word $\bar{w}=w_0\ldots
w_{k-1}\in\mathcal{L}(T_{\lambda,\pi})$ is called
\emph{recurrence} if $w_0\ldots
w_{k-1}w_0\in\mathcal{L}(T_{\lambda,\pi})$. As in
Subsection~\ref{rekurencyjne} for every $\bar{w}=w_0\ldots
w_{k-1}\in\mathcal{L}(T_{\lambda,\pi})$ denote by
$l(\bar{w})=(l_1(\bar{w}),\ldots,l_{m}(\bar{w}))^T$ the
\emph{population vector}, i.e.\ $l_i(\bar{w})=\#\{j:w_j=i,0\leq
j\leq k-1\}$ for $1\leq i\leq m$.
\end{notation}

\begin{remark}\label{uwagaoh}
Suppose that $T=T_{\lambda,\pi}:[0,|\lambda|)\to[0,|\lambda|)$
fulfills the IDOC. Consider $T_{(n)}=T_{\lambda^{(n)},\pi^{(n)}}$,
where $(\lambda^{(n)},\pi^{(n)})=J^n(\lambda,\pi)$. Let
$\Delta^{(n)}_1,\ldots,\Delta^{(n)}_m$ stand for the intervals
exchanged by $T_{(n)}$. Then the action of the initial IET $T$ can
be seen in terms of Rohlin towers over
$\Delta^{(n)}_1,\ldots,\Delta^{(n)}_m\subset[0,|\lambda|)$. Let
\[h^{(n)}=(h^{(n)}_1,\ldots,h^{(n)}_m)=(1,\ldots,1)A^{(n)}(\lambda,\pi).\]
Then $h^{(n)}_i$ is the first return time for the action of $T$ on
$\Delta^{(n)}_i$ to $[0,|\lambda^{(n)}|)$ for $1\leq i\leq m$ and
\[A^{(n)}(\lambda,\pi)_{ij}=\#\{0\leq k<h^{(n)}_i:T^k\Delta^{(n)}_j\subset \Delta_i\}.\]
 Moreover, $\Xi_i=\{T^j\Delta^{(n)}_i:0\leq j<h^{(n)}_j\}$ is a
Rohlin tower of intervals and the towers $\Xi_1,\ldots,\Xi_m$ are
pairwise disjoint and fill the whole interval $[0,|\lambda|)$. It
follows that
\begin{equation}\label{wypelnic}
\sum_{i=1}^m h^{(n)}_j\lambda^{(n)}_j=|\lambda|.
\end{equation}
\end{remark}
\begin{remark}\label{iter}
For any $\bar{w}=w_0\ldots w_{K-1}\in\mathcal{L}(T_{(n)})$ let
$q^{(n)}_{\bar{w}}=\sum_{i=0}^{K-1}h^{(n)}_{w_i}$. If
$x\in[0,|\lambda^{(n)}|)$ and $T_{(n)}^ix\in\Delta^{(n)}_{w_i}$
for $0\leq i< K$ then $T_{(n)}^Kx=T^{q^{(n)}_{\bar{w}}}x$.
Moreover,
\begin{equation}
q^{(n)}_{\bar{w}}=\sum_{i=0}^{K-1}h^{(n)}_{w_i}=h^{(n)}l(w_0\ldots
w_{K-1})=|A^{(n)}(\lambda,\pi)l(w_0\ldots w_{K-1})|,
\end{equation}
where $l(w_0\ldots w_{K-1})$ is the population vector of the word
$w_0\ldots w_{K-1}$. Next consider the word $\bar{w}'=w_0'\ldots
w_{q^{(n)}_{\bar{w}}-1}'\in\mathcal{L}(T)$ such
$T^jx\in\Delta_{w_j'}$ for $0\leq j<q^{(n)}_{\bar{w}}$. Thus
\begin{equation}\label{zmianah}
l(\bar{w}')=A^{(n)}(\lambda,\pi)l(\bar{w}).
\end{equation}
\end{remark}

\begin{definition}[see \cite{Si-Ul}] An
IET $T_{\lambda,\pi}$ is called of \emph{periodic type}, if the
following holds:
\begin{itemize}
\item[(i)] There exists $k>1$ such that
$P^{n+k}(\lambda,\pi)=P^{n}(\lambda,\pi)$ for all
$n\in\mathbb{N}$;
\item[(ii)] The matrix $A^{(k)}(\lambda,\pi)$ has strictly positive entries.
\end{itemize}
\end{definition}
The matrix $(A^{(k)}(\lambda,\pi))^T$ is also known as a
\emph{period matrix}.
It was shown in \cite{Ma-Mo-Yo} how to produce matrices with
strictly positive entries by walking on the Rauzy graph.
Furthermore, in \cite{Si-Ul} (see Lemma 6) the authors presented a
simple method for searching IETs of periodic type using closed
paths in the Rauzy graph.
\begin{remark}\label{periodic-idoc}
Each IET of periodic type satisfies the IDOC (see \cite{Si-Ul}).
\end{remark}

\subsection{Permutation $\eta_{\pi}$ and subspace $H(\pi)$}
\label{permutacja_eta}
Let $\pi\in S_m^0$.
Following \cite{Ve3}, define the permutation $\eta_{\pi}$ on
$\{0,\ldots,m\}$ as follows:
\begin{displaymath}
        \eta_{\pi}(i)=\left\{
        \begin{array}{cl}
        \pi^{-1}(1)-1 &\text{ if } i=0 \\
        m & \text{ if } i=\pi^{-1}(m) \\
        \pi^{-1}(\pi(i)+1)-1 &  \mbox{ otherwise}. \\
        \end{array}
        \right.
        \end{displaymath}
The set $\{0,\ldots,m\}$ is partitioned by $\eta_{\pi}$ into
\emph{cyclic subsets}, i.e.\ into the orbits of $\eta_{\pi}$.
Denote by $\Sigma(\pi)$ the set of all cyclic sets of the
permutation $\eta_{\pi}$. To each $S\in\Sigma(\pi)$ we associate a
vector $b(S)\in\mathbb{Z}^m$ given by
$$b(S)_i=\chi_S(i-1)-\chi_S(i),\ 1\leq i\leq m,$$
where $\chi_S$ is the characteristic function of $S$. By $|b(S)|$
denote the sum of entries of the vector $b(S)$.
\begin{proposition}[\cite{Ve4}, Proposition 5.11]\label{aaa} For each
$S\in\Sigma(\pi)$,
\begin{displaymath}
        |b(S)|=\left\{
        \begin{array}{rl}
        1 & \text{ if }\  0\in S,\ m\notin S, \\
        -1 &\text{ if }\ 0\notin S,\ m\in S, \\
        0 & \mbox{ otherwise}.
        \end{array}
        \right.
        \end{displaymath}
\end{proposition}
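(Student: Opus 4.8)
The plan is to reduce the whole proposition to a one-line telescoping computation. First I would observe that the sum defining $|b(S)|$ telescopes:
\[
|b(S)| \;=\; \sum_{i=1}^{m}b(S)_i \;=\; \sum_{i=1}^{m}\bigl(\chi_S(i-1)-\chi_S(i)\bigr) \;=\; \chi_S(0)-\chi_S(m),
\]
because each value $\chi_S(j)$ with $1\le j\le m-1$ occurs exactly once with a plus sign (coming from the summand $i=j+1$) and exactly once with a minus sign (coming from the summand $i=j$), so all these terms cancel and only $\chi_S(0)$ and $-\chi_S(m)$ survive. Thus the entire statement reduces to evaluating $\chi_S(0)-\chi_S(m)$.

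Next I would split into cases according to whether $0$ and $m$ belong to $S$. Since $\chi_S$ is $\{0,1\}$-valued: if $0\in S$ and $m\notin S$ then $\chi_S(0)-\chi_S(m)=1$; if $0\notin S$ and $m\in S$ then $\chi_S(0)-\chi_S(m)=-1$; and in the two remaining cases, namely $0,m\in S$ or $0,m\notin S$, we have $\chi_S(0)=\chi_S(m)$, so $\chi_S(0)-\chi_S(m)=0$. This is exactly the trichotomy asserted in the proposition, which completes the argument.

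I do not expect any genuine obstacle here. In fact, the identity $|b(S)|=\chi_S(0)-\chi_S(m)$, and hence the proposition, holds for an \emph{arbitrary} subset $S\subseteq\{0,\ldots,m\}$; the hypothesis that $S$ is a cyclic set of $\eta_\pi$ is not used for this particular statement (it enters elsewhere in the theory, for instance to conclude that at most one cyclic set satisfies $|b(S)|=1$ and at most one satisfies $|b(S)|=-1$, since $\{0,\ldots,m\}$ is partitioned by $\eta_\pi$ and so $0$ and $m$ each lie in precisely one cyclic set). The only point that requires care is the sign bookkeeping in the telescoping sum, which I have made explicit above.
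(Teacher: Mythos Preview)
Your argument is correct: the sum $\sum_{i=1}^m(\chi_S(i-1)-\chi_S(i))$ telescopes to $\chi_S(0)-\chi_S(m)$, and the case analysis is immediate. Note that the paper does not supply its own proof of this proposition; it is quoted from \cite{Ve4} without argument, so there is nothing in the present paper to compare your approach against.
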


\begin{lemma}[\cite{Ve4}, Lemma 5.6] \label{ww} Let $\pi\in S^0_m$ and $c=a$ or $b$. There is a bijection
$\Sigma(\pi)\ni S\mapsto cS\in\Sigma(c\pi)$ such that
$$b(S)=A(c,\pi)b(cS)\text{ for every }S\in\Sigma(\pi).$$
\end{lemma}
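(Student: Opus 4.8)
The plan is to rephrase the assertion in terms of the vectors $b(S)$ and then settle it by a finite combinatorial analysis driven by the explicit descriptions of the permutation $\eta_\pi$ and of the transition matrices of Rauzy induction. Since $\lambda=A^{(n)}(\lambda,\pi)\lambda'$ forces $A(c,\pi)$ to be invertible, the identity $b(S)=A(c,\pi)b(cS)$ is equivalent to $A(c,\pi)^{-1}b(S)=b(cS)$. Moreover the map $S\mapsto b(S)$ is injective on $\Sigma(\pi)$: if $b(S)=b(S')$ then $\chi_S-\chi_{S'}$ is constant on $\{0,\dots,m\}$, and a nonzero such constant cannot occur for two cyclic sets (the candidates $\emptyset$ and $\{0,\dots,m\}$ are excluded, the latter forcing $\eta_\pi$ to be a single cycle, a case in which the statement is vacuous). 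Hence it suffices to prove: (a) for each $S\in\Sigma(\pi)$ the vector $A(c,\pi)^{-1}b(S)$ equals $b(S')$ for some (then unique) $S'\in\Sigma(c\pi)$; and (b) the analogous statement with $A(c,\pi)$ in place of $A(c,\pi)^{-1}$ and the roles of $\pi$ and $c\pi$ interchanged. The two maps so produced are mutually inverse, because $A(c,\pi)$ and $A(c,\pi)^{-1}$ are and $b$ is injective, so (a) and (b) together yield the bijection with no a priori information about $\#\Sigma(\pi)$.

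I would carry (a)--(b) out for $c=a$ in detail. Put $k=\pi^{-1}(m)$; irreducibility gives $k<m$ and $\pi(m)<m$. From the formula for $a\pi$ one reads off $(a\pi)^{-1}(m)=k$ and that $(a\pi)^{-1}(j)$ is $\pi^{-1}(j)$ or $\pi^{-1}(j)\pm1$ depending on where $j$ lies; feeding this into the definition of $\eta$ expresses $\eta_{a\pi}$ through $\eta_\pi$. Away from the coordinates displaced by the step the two permutations agree, while near $k$ and $m$ the orbit partition of $\eta_{a\pi}$ is obtained from that of $\eta_\pi$ by a local surgery — \emph{not} a relabelling, since the cardinalities of individual cyclic sets do change, as one already sees in small examples. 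Once this surgery is written out one knows exactly which cyclic set $aS$ of $\eta_{a\pi}$ to attach to a given $S$, and the equality $A(a,\pi)^{-1}b(S)=b(aS)$ is then checked coordinate by coordinate from the explicit sparse form of $A(a,\pi)$: in the coordinates where $A(a,\pi)^{-1}$ acts as the identity or as a transposition it reduces to matching the appropriately re-indexed characteristic functions, and in the one coordinate carrying the extra entry it becomes a telescoping identity of the type $\chi_S(k-1)-\chi_S(m)=\sum_{j}\bigl(\chi_S(j-1)-\chi_S(j)\bigr)$ forced by that entry; (b) is the same bookkeeping in the reverse direction. The case $c=b$ then follows from the standard duality of Rauzy induction — $(b\pi)^{-1}=a(\pi^{-1})$, with a corresponding relation between $A(b,\pi)$ and $A(a,\pi^{-1})$ and between $\eta_{\pi^{-1}}$ and a reflected copy of $\eta_\pi$ — or, more simply, by repeating the previous argument from the right-hand endpoint of the exchange rather than the left.

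The real work, and the step where I expect the chief difficulty, is pinning down the orbit surgery and the re-indexing correctly in every configuration, in particular at the three indices $0$, $\pi^{-1}(m)$ and $(c\pi)^{-1}(m)$ where the definition of $\eta$ branches, and in degenerate cases such as $\pi^{-1}(m)=m-1$, $\pi(m)=1$, or $\pi^{-1}(1)$ falling among the displaced coordinates. For each fixed combinatorial configuration the remaining verification is mechanical, and Proposition~\ref{aaa}, which pins $|b(S)|$ to whether $0$ or $m$ lies in $S$, provides a cross-check that the cyclic set through $0$ (respectively through $m$) is being followed correctly through the surgery. One could also streamline part (a) conceptually by identifying $\operatorname{span}\{b(S):S\in\Sigma(\pi)\}$ with the kernel of the antisymmetric intersection form and using that $A(c,\pi)$ intertwines these forms along the Rauzy algorithm (see \cite{Ve3}); but singling out the distinguished generators $b(S)$ among all elements of that kernel still rests on the direct combinatorial check above.
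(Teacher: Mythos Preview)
The paper does not supply its own proof of this lemma: it is quoted verbatim from Veech \cite{Ve4}, Lemma~5.6, and used as a black box. So there is no in-paper argument to compare against.

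Your outline is the standard route and matches what Veech does: compute $\eta_{c\pi}$ explicitly from $\eta_\pi$, track how the cycle decomposition changes under the single Rauzy move, and then verify the matrix identity coordinatewise using the sparse form of $A(c,\pi)$. Your reduction to (a) and (b) via injectivity of $S\mapsto b(S)$ is sound, and invoking the duality $(b\pi)^{-1}=a(\pi^{-1})$ to transfer the $c=a$ case to $c=b$ is legitimate. Where your write-up stops short of a proof is exactly where you flag it: the ``orbit surgery'' near the indices $k=\pi^{-1}(m)$, $m-1$, $m$ is the entire content of the lemma, and you have described that step rather than carried it out. In Veech's argument this is a short but careful case split (essentially two cases for $c=a$, according to whether $k$ and $m$ lie in the same $\eta_\pi$-cycle or not), after which the identity $b(S)=A(a,\pi)b(aS)$ drops out line by line. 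If you want a self-contained proof, that case split is what you must actually write down; the conceptual remark about $\ker L^\pi$ being carried to $\ker L^{c\pi}$ is true but, as you note, does not by itself single out the distinguished basis $\{b(S)\}$.
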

Let us recall the definition of the alternating $m\times m$ matrix
$L^{\pi}$:
\begin{displaymath}
        L^{\pi}_{ij}=\left\{
        \begin{array}{rl}
        1 & \text{ if }i<j \text{ and }\pi(i)>\pi(j), \\
        -1 & \text{ if }i>j\text{ and }\pi(i)<\pi(j), \\
        0 & \mbox{ otherwise}.
        \end{array}
        \right.
        \end{displaymath}
Define $H(\pi)=L^{\pi}(\mathbb{R}^m)$. Since $L^\pi$ is
anti-symmetric, $H(\pi)=(\operatorname{ker} L^{\pi})^\perp$.

\begin{proposition}[\cite{Ve4}, Proposition 5.2]
\label{nnn} A vector $h\in H(\pi)$ if and only if $h\cdot b(S)=0$
for every $S\in\Sigma(\pi)$.
\end{proposition}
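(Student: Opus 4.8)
The plan is to reduce the Proposition to the single linear-algebraic identity
\[
\ker L^{\pi}=\operatorname{span}\{b(S):S\in\Sigma(\pi)\}.
\]
Indeed, $L^{\pi}$ is anti-symmetric, so $H(\pi)=L^{\pi}(\mathbb R^{m})=(\ker L^{\pi})^{\perp}$, while $\{h:h\cdot b(S)=0\text{ for all }S\in\Sigma(\pi)\}=(\operatorname{span}\{b(S)\})^{\perp}$; hence the two subspaces in the statement coincide exactly when the displayed identity holds. I would establish it in three steps, the first two giving one inclusion and the exact dimension of $\operatorname{span}\{b(S)\}$, the third computing $\dim\ker L^{\pi}$.

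\emph{Step 1: $\operatorname{span}\{b(S)\}\subseteq\ker L^{\pi}$, i.e.\ $L^{\pi}b(S)=0$ for every $S\in\Sigma(\pi)$.} The engine is the closed form $(L^{\pi}v)_{i}=\sum_{j:\pi(j)<\pi(i)}v_{j}-\sum_{j<i}v_{j}$, a routine rearrangement of the definition of $L^{\pi}$ (it is also the translation vector of the IET $T_{v,\pi}$). Substituting $v=b(S)$, i.e.\ $v_{j}=\chi_{S}(j-1)-\chi_{S}(j)$, the cumulative sum $\sum_{j<i}v_{j}$ telescopes to $\chi_{S}(0)-\chi_{S}(i-1)$, and the $\pi$-ordered sum telescopes as well once one feeds in the two elementary identities $\eta_{\pi}(0)=\pi^{-1}(1)-1$ and $\eta_{\pi}(\pi^{-1}(l))=\pi^{-1}(l+1)-1$ for $1\le l\le m-1$, together with the $\eta_{\pi}$-invariance of $S$ (so $\chi_{S}\circ\eta_{\pi}=\chi_{S}$). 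One ends up with $(L^{\pi}b(S))_{i}=\chi_{S}(i-1)-\chi_{S}(\pi^{-1}(\pi(i)-1))$ when $\pi(i)\ge 2$ and $(L^{\pi}b(S))_{i}=\chi_{S}(i-1)-\chi_{S}(0)$ when $\pi(i)=1$; since $\eta_{\pi}(\pi^{-1}(\pi(i)-1))=i-1$ in the first case and $\eta_{\pi}(0)=i-1$ in the second, a last application of $\eta_{\pi}$-invariance kills the expression. This already proves the implication $H(\pi)\subseteq\{h:h\cdot b(S)=0\ \forall S\}$.

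\emph{Step 2: $\dim\operatorname{span}\{b(S):S\in\Sigma(\pi)\}=\sigma-1$, where $\sigma=|\Sigma(\pi)|$.} Let $\partial\colon\mathbb R^{\{0,\dots,m\}}\to\mathbb R^{m}$ be the ``discrete derivative'' $(\partial f)_{i}=f(i-1)-f(i)$, so $b(S)=\partial\chi_{S}$, and let $W$ be the space of functions fixed by the permutation operator $\Pi_{\eta_{\pi}}f=f\circ\eta_{\pi}$; then $\dim W=\sigma$ with basis $\{\chi_{S}\}_{S\in\Sigma(\pi)}$, and $\operatorname{span}\{b(S)\}=\partial(W)$. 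Since $\ker\partial$ consists of the constant functions, which lie in $W$, the restriction $\partial|_{W}$ has one-dimensional kernel, whence $\dim\partial(W)=\sigma-1$.

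\emph{Step 3 (the main obstacle): $\dim\ker L^{\pi}=\sigma-1$.} I would rerun the computation of Step 1 for an arbitrary $f\in\mathbb R^{\{0,\dots,m\}}$ in place of $\chi_{S}$, carrying along the ``defect'' $g:=(\Pi_{\eta_{\pi}}-I)f$; the same telescoping then produces the clean formula
\[
(L^{\pi}\partial f)_{i}=g(0)+\sum_{l=1}^{\pi(i)-1}g(\pi^{-1}(l)),\qquad 1\le i\le m.
\]
As $i\mapsto\pi(i)-1$ is a bijection of $\{1,\dots,m\}$ onto $\{0,\dots,m-1\}$, the vector $L^{\pi}\partial f$ vanishes precisely when $g(0)=0$ and $g(\pi^{-1}(l))=0$ for $1\le l\le m-1$, i.e.\ when $g$ is supported on the single point $\pi^{-1}(m)$. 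Thus $\ker(L^{\pi}\partial)=(\Pi_{\eta_{\pi}}-I)^{-1}\bigl(\operatorname{span}(e_{\pi^{-1}(m)})\bigr)$, where $e_{\pi^{-1}(m)}$ is the indicator of $\pi^{-1}(m)$. Now $\operatorname{Im}(\Pi_{\eta_{\pi}}-I)=\{g:\sum_{j\in S}g(j)=0\text{ for every }S\in\Sigma(\pi)\}$ (the elementary description of the image of a permutation operator minus the identity), and $e_{\pi^{-1}(m)}$ is \emph{not} in this image, since its sum over the cyclic set containing $\pi^{-1}(m)$ equals $1$. Hence $\operatorname{span}(e_{\pi^{-1}(m)})\cap\operatorname{Im}(\Pi_{\eta_{\pi}}-I)=\{0\}$, so $\ker(L^{\pi}\partial)=\ker(\Pi_{\eta_{\pi}}-I)=W$, of dimension $\sigma$. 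On the other hand $\partial$ is onto $\mathbb R^{m}$, so $\dim\ker(L^{\pi}\partial)=\dim\ker\partial+\dim\ker L^{\pi}=1+\dim\ker L^{\pi}$; comparing, $\dim\ker L^{\pi}=\sigma-1$. Combining the three steps, $\operatorname{span}\{b(S)\}$ is a $(\sigma-1)$-dimensional subspace of the $(\sigma-1)$-dimensional space $\ker L^{\pi}$, so the two are equal, which is the assertion. I expect the only genuinely delicate point to be the telescoping bookkeeping in Steps 1 and 3 — getting the boundary cases $\pi(i)=1$ and $l=1$ right — and the clean factorization of $L^{\pi}\partial$ through $\Pi_{\eta_{\pi}}-I$ that underlies Step 3; everything else is standard linear algebra.
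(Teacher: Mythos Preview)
The paper does not prove this proposition at all: it is quoted verbatim from Veech's 1984 paper \cite{Ve4} (Proposition~5.2 there) and used as a black box, so there is no ``paper's own proof'' to compare against.

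Your argument is correct and self-contained. The reduction to $\ker L^{\pi}=\operatorname{span}\{b(S):S\in\Sigma(\pi)\}$ is the natural one, and your three steps go through cleanly. In Step~1 the telescoping is exactly as you describe; in Step~3 your formula
\[
(L^{\pi}\partial f)_{i}=g(0)+\sum_{l=1}^{\pi(i)-1}g(\pi^{-1}(l)),\qquad g=(\Pi_{\eta_{\pi}}-I)f,
\]
is correct (including the boundary case $\pi(i)=1$), and the observation that $e_{\pi^{-1}(m)}\notin\operatorname{Im}(\Pi_{\eta_{\pi}}-I)$ because its orbit-sum over the cycle containing $\pi^{-1}(m)$ equals~$1$ is exactly what is needed to conclude $\ker(L^{\pi}\partial)=W$. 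The dimension count $\dim\ker(L^{\pi}\partial)=1+\dim\ker L^{\pi}$ via surjectivity of $\partial$ is standard. This is essentially how Veech proves it in \cite{Ve4}, though he organizes the bookkeeping around the partial-sum map $N(h)_{j}=\sum_{i\le j}h_{i}$ rather than your $\partial$; the content is the same.
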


\begin{remark}\label{jjj} Let $\widetilde{S}_m^0$ stand for the subset of irreducible
permutations $\pi$ of $m$ elements such that if $S\in\Sigma(\pi)$,
then $|b(S)|=\pm 1$. The condition $\pi\in\widetilde{S}_m^0$ is
quite restrictive. In view of Proposition~\ref{aaa}, this implies
$\#\Sigma(\pi)=2$, hence if $\Sigma(\pi)=\{S_0,S_1\}$ with $0\in
S_0$  then $m\in S_1$ and $b(S_1)=-b(S_0)$. Note that if $m$ is
odd then $\tau_m^{sym}\in\widetilde{S}_m^0$  and
$b(S_0)=(1,-1,1,\ldots,-1,1)$.
\end{remark}

Let $E=[E_{ij}]_{1\leq i,j\leq m}$ be a matrix with strictly
positive entries. Following \cite{Ve2} set
$$\nu(E)=\max_{1\leq i,j,k\leq m}\frac{E_{ij}}{E_{ik}}\text{ and }e_j=\sum_{i=1}^{m}E_{ij}.$$
Then
\begin{equation}e_j\leq\nu(E)e_k,\ 1\leq j,k\leq m\text{ and }\nu(FE)\leq\nu(E),\label{1}\end{equation}
for any nonnegative nonsingular matrix $F=[F_{ij}]_{1\leq i,j\leq
m}$.

\subsection{IETs and substitutions}\label{kodowanie} For any IET
$T:=T_{\lambda,\pi}:[0,|\lambda|)\rightarrow[0,|\lambda|)$ of $m$
intervals $\Delta_1,\ldots,\Delta_m$ and $n\in\N$ we can define a
substitution on $m$ symbols (see \cite{Fi-Ho-Ro}). We  shortly
describe this procedure. Let $(\lambda',\pi')=J^n(\lambda,\pi)$.
Let $\Delta_1',\ldots,\Delta_m'$ be the exchanged intervals for
$T_{\lambda',\pi'}$. Let $k:[0,|\lambda'|)\rightarrow\mathbb{N}$
stand for the first return time map to the interval
$[0,|\lambda'|$). Let $A=\{1,\ldots,m\}$. We define a map
$cod:[0,|\lambda'|)\rightarrow A^{*}$ in the following way:
$cod(x)=i_1\ldots i_{k(x)}$  if $T^jx\in\Delta_{i_j}$ for $0\leq
j<k(x)$. Since the map $cod:[0,|\lambda'|)\rightarrow A^{*}$ is
constant on every interval $\Delta_1',\ldots,\Delta_m'$, we can
define a substitution $\sigma:A\rightarrow A^{*}$ so that
$\sigma(i)=cod(x)$ for each $x\in\Delta_i'$ and $i=1,\ldots,m$.

\begin{remark}\label{isosub}
Suppose that $T_{\lambda,\pi}$ is an IET of periodic type for
which $A^{(n)}(\lambda,\pi)^T$ is its periodic matrix. Let us
consider the corresponding substitution $\sigma:A\rightarrow A
^{*}$. Then $T_{\lambda,\pi}$ is measure-theoretically isomorphic
to the substitution system $S_\sigma$. The isomorphism is
established by the map
$\phi:[0,|\lambda|)\rightarrow{A}^{\mathbb{Z}}$,
$$(\phi(x))_i=j,\ \mbox{if}\ T^ix\in\Delta_j,\ 1\leq j\leq m,\
\mbox{for all}\ i\in\mathbb{Z}.$$ It follows that
$\mathcal{L}(T_{\lambda,\pi})=\mathcal{L}_{\sigma}$. Moreover,
$A^{(n)}(\lambda,\pi)$ is the substitution matrix of $\sigma$.
\end{remark}

\section{Disjointness from ELF system}
\subsection{Perturbation of IET} \label{zaburzenia}
In this section we state three lemmas which, roughly speaking, say
that  any type of finite orbit combinatorics of any  IET
satisfying the IDOC is preserved in the passage to its slight
perturbation. The proof of the lemmas is rather straightforward
and we leave it to the reader.

Let $T:=T_{\lambda,\pi}:[0,|\lambda|)\rightarrow [0,|\lambda|)$ be
an IET of $m$ intervals $\Delta_1,\ldots,\Delta_m$ of lengths
$\lambda_1,\ldots,\lambda_m\geq 0$ and given by an irreducible
permutation $\pi$.  Recall that
$$Tx=x+\sum_{\pi(j)<\pi(i)}\lambda_{\pi(j)}-\sum_{j<i}\lambda_j\text{ if } x\in\Delta_i,$$
$$T^{-1}x=x-\sum_{\pi(j)<\pi(i)}\lambda_{\pi(j)}+\sum_{j<i}\lambda_j\text{ if } x\in T\Delta_i,$$
and $\beta_0=0$, $\beta_s=\sum_{j=1}^{s}\lambda_j$ for $1\leq
s\leq m-1$. Assume that $T$ satisfies the \emph{weak IDOC}, i.e.\
every orbit of $\beta_s$, $1\leq s\leq m-1$ is infinite and
$Orb(\beta_s)\cap Orb(\beta_t)\neq\emptyset$ for $1\leq s, t\leq
m-1$ implies $\beta_s=\beta_t$.

Fix $K\geq 1$.   Let
$$
0<\delta<\min\{|T^{-j_1}\beta_{t_1}-T^{-j_2}\beta_{t_2}|:
|j_1|,|j_2|\leq K,\,1\leq t_1,t_2\leq
m-1,\beta_{t_1}\neq\beta_{t_2}\}.
$$ Fix an $\varepsilon=\vep_{T,K}>0$ such that
\begin{equation}\label{zalozenie}
m(2K+4)\varepsilon<\frac{1}{10}\delta.
\end{equation} Let
$$\mathcal{K}:=\{(\lambda^{\varepsilon},\pi)\in\R_+^m\times\{\pi\}:\ |\lambda^{\varepsilon}|=|\lambda|,\ |\lambda_i-\lambda_i^{\varepsilon}|<\varepsilon,\ 1\leq i\leq m \}.$$

Suppose that
$T_{\varepsilon}:=T_{\lambda^{\varepsilon},\pi}:[0,|\lambda|)\rightarrow
[0,|\lambda|)$ is an IET such that
$(\lambda^{\varepsilon},\pi)\in\mathcal{K}$. Let
$\Delta^{\varepsilon}_1,\ldots,\Delta^{\varepsilon}_m$ be the
intervals exchanged by $T_{\varepsilon}$ and let
$\beta_0^{\varepsilon}=0$, $\beta_i^{\varepsilon}=\sum_{j\leq
i}\lambda_j^{\varepsilon}$, $i=1,\ldots,m-1$. Note that
\begin{equation*}
|\beta_i-\beta_i^{\varepsilon}|<m\varepsilon\text{ and
}|T\beta_i-T_{\varepsilon}\beta_i^{\varepsilon}|<m\varepsilon\text{
for  }0\leq i\leq m-1.
\end{equation*}
Indeed,
\[|\beta_t-\beta_t^{\varepsilon}|=|\sum_{j\leq t}\lambda_{j}-\sum_{j\leq t}\lambda^{\varepsilon}_{j}|\leq
\sum_{j\leq t}|\lambda_{j}-\lambda^{\varepsilon}_{j}|< m\vep,\]
\[|T\beta_t-T_{\varepsilon}\beta_t^{\varepsilon}|=|\sum_{\pi(j)<\pi(t+1)}\lambda_{j}-\sum_{\pi(j)<\pi(t+1)}\lambda^{\varepsilon}_{j}|\leq
\sum_{\pi(j)<\pi(t+1)}|\lambda_{j}-\lambda^{\varepsilon}_{j}|<
m\vep.\]

\begin{lemma}\label{iteracje_1}
For every $0\leq s\leq K+1$ and $0\leq t\leq m-1$
\begin{equation*}
|T^{-s+1}\beta_t-T_{\varepsilon}^{-s+1}\beta_t^{\varepsilon}|<m(2s+1){\varepsilon}\text{
and } T^{-s}\beta_t\in\Delta_i\text{ implies
}T_\vep^{-s}\beta^\vep_t\in\Delta^{\vep}_i.
\end{equation*}
Moreover,
\[|T^{s}\beta_t-T_{\varepsilon}^{s}\beta_t^{\varepsilon}|<m(2s+1){\varepsilon}\text{
and } T^{s}\beta_t\in\Delta_i\text{ implies
}T_\vep^{s}\beta^\vep_t\in\Delta^{\vep}_i\] for all $0\leq s\leq
K$ and $0\leq t\leq m-1$.\bez
\end{lemma}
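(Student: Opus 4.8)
The plan is to prove both assertions simultaneously by induction on $s$, treating the backward iterates $T^{-s}\beta_t$ and the forward iterates $T^{s}\beta_t$ in parallel; the two cases are symmetric because $T^{-1}$ is itself an IET of intervals $T\Delta_{\pi^{-1}(1)},\ldots,T\Delta_{\pi^{-1}(m)}$ with the same $\varepsilon$-control on endpoints, so I will write out only the forward case and indicate that the backward case is identical. The base case $s=0$ is exactly the estimate $|\beta_t-\beta_t^\varepsilon|<m\varepsilon<m(2\cdot 0+1)\varepsilon$ already established in the text (with the trivial observation that if $\beta_t$ and $\beta^\varepsilon_t$ lie strictly inside the same $\Delta_i$, there is nothing to iterate; the point of the hypothesis \eqref{zalozenie} is to guarantee this). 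For the inductive step, suppose $|T^{s}\beta_t-T^{s}_\varepsilon\beta^\varepsilon_t|<m(2s+1)\varepsilon$ and $T^{s}\beta_t\in\Delta_i$ implies $T^{s}_\varepsilon\beta^\varepsilon_t\in\Delta^\varepsilon_i$.

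First I would apply $T$ (resp.\ $T_\varepsilon$) to both points. Since $T$ acts on $\Delta_i$ by the translation $x\mapsto x+\tau_i$ with $\tau_i=\sum_{\pi(j)<\pi(i)}\lambda_{\pi(j)}-\sum_{j<i}\lambda_j$, and $T_\varepsilon$ acts on $\Delta^\varepsilon_i$ by the analogous translation $\tau^\varepsilon_i$, the inductive hypothesis (which places both iterates in the $i$-th interval of the respective IET) gives
\[
|T^{s+1}\beta_t-T^{s+1}_\varepsilon\beta^\varepsilon_t|
\le |T^{s}\beta_t-T^{s}_\varepsilon\beta^\varepsilon_t|+|\tau_i-\tau^\varepsilon_i|
< m(2s+1)\varepsilon+ m\varepsilon = m(2s+2)\varepsilon < m(2s+3)\varepsilon,
\]
using $|\tau_i-\tau^\varepsilon_i|\le\sum_{\pi(j)<\pi(i)}|\lambda_j-\lambda^\varepsilon_j|<m\varepsilon$ exactly as in the displayed estimate for $|T\beta_t-T_\varepsilon\beta^\varepsilon_t|$. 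This proves the norm bound at level $s+1$. The remaining (and only delicate) point is the implication $T^{s+1}\beta_t\in\Delta_{i'}\Rightarrow T^{s+1}_\varepsilon\beta^\varepsilon_t\in\Delta^\varepsilon_{i'}$: the two points are $m(2s+3)\varepsilon$-close, so they can fail to lie in intervals with the same index only if $T^{s+1}\beta_t$ is within $m(2s+3)\varepsilon$ of some endpoint $\beta_{t'}$ of $T$, i.e.\ $T^{-(s+1)}\beta_{t'}$ lies within $\varepsilon\cdot\text{(const)}$ of $\beta_t$ — but this is prevented by the choice of $\delta$ (which bounds below the distances between the relevant preimages $T^{-j}\beta_{t'}$, $|j|\le K$) together with \eqref{zalozenie}, which forces $m(2s+3)\varepsilon+m\varepsilon<\tfrac{1}{10}\delta$ for all $s\le K$. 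Hence $T^{s+1}\beta_t$ stays $\delta/10$-away from every endpoint of $T$, so the nearby point $T^{s+1}_\varepsilon\beta^\varepsilon_t$ (also using $|\beta_{t'}-\beta^\varepsilon_{t'}|<m\varepsilon$) lands in the interval $\Delta^\varepsilon_{i'}$ of the same index. This closes the induction.

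The one step that requires genuine care — and which the authors quite reasonably leave to the reader — is bookkeeping the constant $m(2s+1)$ against $\delta$ across all $s\le K$: one must check that the cumulative drift $m(2K+1)\varepsilon$, plus the $m\varepsilon$ slack from comparing $\beta_{t'}$ with $\beta^\varepsilon_{t'}$ and possibly a further $m\varepsilon$ from one more application of $T$, stays well below the separation scale $\delta$; inequality \eqref{zalozenie} with the factor $(2K+4)$ is calibrated precisely for this. The ``weak IDOC'' hypothesis (rather than full IDOC) is what makes the quantity $\delta$ meaningful: orbits of distinct endpoints $\beta_{t_1}\neq\beta_{t_2}$ with $|j_1|,|j_2|\le K$ are genuinely separated, so $\delta>0$, while coincidences $\beta_{t_1}=\beta_{t_2}$ are harmless since they involve no loss of combinatorial information. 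Everything else is the routine triangle-inequality computation displayed above, iterated $K$ times.
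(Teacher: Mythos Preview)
The paper does not give a proof of this lemma: the $\Box$ follows the statement directly, and the authors announce at the start of \S4.1 that ``the proof of the lemmas is rather straightforward and we leave it to the reader.'' Your induction on $s$ is exactly the intended argument and is correct.

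Two minor imprecisions worth tightening. First, your justification $|\tau_i-\tau_i^\varepsilon|\le\sum_{\pi(j)<\pi(i)}|\lambda_j-\lambda_j^\varepsilon|$ drops the second sum $\sum_{j<i}$ in the translation vector; either note that the coefficient of each $\lambda_j-\lambda_j^\varepsilon$ in $\tau_i-\tau_i^\varepsilon$ lies in $\{-1,0,1\}$ (giving $<m\varepsilon$), or use the crude bound $<2m\varepsilon$, which produces exactly the increment matching the stated growth $m(2s+1)\varepsilon$. Second, the clause ``i.e.\ $T^{-(s+1)}\beta_{t'}$ lies within $\varepsilon\cdot\text{(const)}$ of $\beta_t$'' is not quite right, since $T^{-(s+1)}$ may have discontinuities between $T^{s+1}\beta_t$ and $\beta_{t'}$; but no such transfer is needed, because the definition of $\delta$ bounds $|T^{-j_1}\beta_{t_1}-T^{-j_2}\beta_{t_2}|$ from below directly, and $T^{s+1}\beta_t$, $\beta_{t'}$ are already of this form with $j_1=-(s+1)$, $j_2=0$.
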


\begin{lemma}\label{nowy}
For all $0\leq s_1,s_2\leq K$ and $0\leq t_1,t_2\leq
m-1$,
\[T^{-s_1}\beta_{t_1}< T^{-s_2}\beta_{t_2}\;\Longleftrightarrow\; T^{-s_1}_{\vep}\beta^{\vep}_{t_1}< T_{\vep}^{-s_2}\beta^{\vep}_{t_2}.\]
\bez
\end{lemma}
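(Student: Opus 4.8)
The statement to prove is that for all $0\leq s_1,s_2\leq K$ and $0\leq t_1,t_2\leq m-1$,
\[
T^{-s_1}\beta_{t_1}< T^{-s_2}\beta_{t_2}\;\Longleftrightarrow\; T^{-s_1}_{\vep}\beta^{\vep}_{t_1}< T_{\vep}^{-s_2}\beta^{\vep}_{t_2}.
\]
The plan is to reduce everything to the quantitative closeness estimate already furnished by Lemma~\ref{iteracje_1}, namely $|T^{-s}\beta_t - T_\vep^{-s}\beta_t^\vep| < m(2s+1)\vep \le m(2K+1)\vep$, together with the choice of $\delta$ and $\vep$ in \eqref{zalozenie}. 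First I would dispose of the degenerate case $\beta_{t_1}=\beta_{t_2}$: then also $\beta^\vep_{t_1}=\beta^\vep_{t_2}$ (they are partial sums of the same truncated length data), so $T^{-s_1}\beta_{t_1}$ and $T^{-s_2}\beta_{t_2}$ are two points on the same orbit, and since the weak IDOC forces every such orbit to be infinite, these two points coincide only when $s_1=s_2$, in which case both sides of the claimed equivalence are the statement ``false $\Leftrightarrow$ false''; and if $s_1\ne s_2$ then the strict inequality holds on one side precisely when it holds on the other because $T$ and $T_\vep$ are both IETs with the same combinatorics on the relevant finite pieces (one can invoke the ordering of return images, or simply note $T^{-s_1}\beta_{t_1}\ne T^{-s_2}\beta_{t_2}$ and handle the sign by the closeness estimate as below, the gap here being bounded away from $0$ by minimality/the IDOC on a single orbit).

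The main case is $\beta_{t_1}\ne\beta_{t_2}$. Here the whole point of the definition of $\delta$ is that
\[
|T^{-s_1}\beta_{t_1} - T^{-s_2}\beta_{t_2}| \ge \delta
\]
whenever $|s_1|,|s_2|\le K$ and $\beta_{t_1}\ne\beta_{t_2}$ (this is exactly the quantity $\delta$ is chosen to be smaller than, applied with $j_1=s_1$, $j_2=s_2$). Now suppose $T^{-s_1}\beta_{t_1} < T^{-s_2}\beta_{t_2}$. Then in fact $T^{-s_1}\beta_{t_1} + \delta \le T^{-s_2}\beta_{t_2}$. By Lemma~\ref{iteracje_1}, $T_\vep^{-s_1}\beta^\vep_{t_1} < T^{-s_1}\beta_{t_1} + m(2K+1)\vep$ and $T_\vep^{-s_2}\beta^\vep_{t_2} > T^{-s_2}\beta_{t_2} - m(2K+1)\vep$, hence
\[
T_\vep^{-s_2}\beta^\vep_{t_2} - T_\vep^{-s_1}\beta^\vep_{t_1} > \delta - 2m(2K+1)\vep > 0,
\]
since $2m(2K+1)\vep < m(2K+4)\vep < \tfrac{1}{10}\delta$ by \eqref{zalozenie}. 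This gives $T_\vep^{-s_1}\beta^\vep_{t_1} < T_\vep^{-s_2}\beta^\vep_{t_2}$, i.e.\ the forward implication. The reverse implication is obtained by swapping the roles of the two points (the hypothesis $T_\vep^{-s_1}\beta^\vep_{t_1} < T_\vep^{-s_2}\beta^\vep_{t_2}$ cannot coexist with $T^{-s_2}\beta_{t_2} < T^{-s_1}\beta_{t_1}$ by the same estimate, and the equality case $T^{-s_1}\beta_{t_1} = T^{-s_2}\beta_{t_2}$ is excluded since $\beta_{t_1}\ne\beta_{t_2}$ and these lie on disjoint orbits under the weak IDOC), so the two strict orderings are equivalent.

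I do not expect a serious obstacle here; the lemma is, as the authors say, essentially bookkeeping around Lemma~\ref{iteracje_1}. The one point that needs a little care is the boundary case $\beta_{t_1}=\beta_{t_2}$ with $s_1\ne s_2$: one must make sure the two distinct orbit points $T^{-s_1}\beta_{t_1}$ and $T^{-s_2}\beta_{t_1}$ are separated by a definite amount so that the perturbation cannot flip their order. This follows because they are among finitely many points $\{T^{-s}\beta_t : |s|\le K,\ 1\le t\le m-1\}$, all distinct by the weak IDOC, so their pairwise distances have a positive minimum $\delta'$; enlarging the constraint on $\vep$ to also beat $\delta'$ (or simply noting $\delta'\ge$ something controlled, since within a single orbit consecutive images are separated by lengths of the exchanged intervals) closes the gap, and the same two-sided estimate from Lemma~\ref{iteracje_1} then transfers the order. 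Alternatively, and more cleanly, one observes that Lemma~\ref{iteracje_1} also records $T^{-s}\beta_t\in\Delta_i \Rightarrow T_\vep^{-s}\beta_t^\vep\in\Delta_i^\vep$, so the points land in corresponding intervals with corresponding (perturbed) left endpoints, and the relative order within a tower is dictated purely by the combinatorics of $\pi$, which is unchanged.
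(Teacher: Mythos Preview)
The paper omits the proof of this lemma (``we leave it to the reader''), and your approach---combine the closeness bound from Lemma~\ref{iteracje_1} with the separation constant $\delta$ and the smallness condition~\eqref{zalozenie}---is exactly the intended straightforward argument. The main case $\beta_{t_1}\ne\beta_{t_2}$ is right in outline, but your arithmetic slips: the chain $2m(2K+1)\vep<m(2K+4)\vep$ is false for every $K\ge1$ (it would force $4K+2<2K+4$). Re-indexing Lemma~\ref{iteracje_1} carefully gives $|T^{-s}\beta_t-T_\vep^{-s}\beta_t^\vep|<m(2s+3)\vep\le m(2K+3)\vep$ for $0\le s\le K$, and the correct estimate is then $2m(2K+3)\vep<2m(2K+4)\vep<\tfrac{1}{5}\delta<\delta$ by~\eqref{zalozenie}, which yields the positive gap you want.

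In the degenerate case your claim ``$\beta_{t_1}=\beta_{t_2}\Rightarrow\beta_{t_1}^\vep=\beta_{t_2}^\vep$'' fails when $t_1\ne t_2$: under the weak IDOC some $\lambda_i$ may vanish while the perturbed $\lambda_i^\vep$ need not, so the perturbed partial sums can separate. Your fallback of ``enlarging the constraint on $\vep$'' is not available, since $\vep$ is already fixed by~\eqref{zalozenie}, and the ``combinatorics of $\pi$'' alternative does not by itself pin down the relative order of two points lying in the same $\Delta_i$. The clean fix is to observe that only finitely many points $T^{-j}\beta_t$ with $|j|\le K$, $0\le t\le m-1$ are in play, and the pairwise distances between the \emph{distinct} ones have a positive minimum; reading $\delta$ as any positive number below that minimum (the paper's restriction $\beta_{t_1}\ne\beta_{t_2}$, $1\le t_i$ is only to keep the displayed minimum positive and may be harmlessly relaxed) lets the single estimate $\delta-2m(2K+3)\vep>0$ handle every pair of distinct points uniformly, with no separate degenerate analysis required.
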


Take $\bar{w}=w_0\ldots w_K\in\{1,\ldots,m\}^{K+1}$ such that the
set $I_{\bar{w}}=\bigcap_{i=0}^{K}T^{-i}\Delta_{w_{i}}$ is not
empty, i.e. $\bar{w}\in\mathcal{L}(T)$. The set $I_{\bar{w}}$ is
an interval of the form
$[T^{-k_1}\beta_{t_1},T^{-k_2}\beta_{t_2})$, where $0\leq
k_1,k_2\leq K$ and $0\leq t_1,t_2\leq m-1$. It follows that
$|\bigcap_{i=0}^{K}T^{-i}\Delta_{w_{i}}|>\delta$. Let
$$I^\vep_{\bar{w}}=[T_\vep^{-k_1}\beta^\vep_{t_1},T_\vep^{-k_2}\beta^\vep_{t_2}).$$

In view of (\ref{zalozenie}), the following result is a simple
consequence of Lemmas~\ref{iteracje_1} and \ref{nowy}.
\begin{lemma}\label{iteracje_3}
For each $\bar{w}=w_0\ldots w_K\in\{1,\ldots,m\}^{K+1}$ if
$I_{\bar{w}}\neq\emptyset$ and $(\lambda^\vep,\pi)\in\mathcal{K}$
then
$$|I^\vep_{\bar{w}}|\geq \frac{4}{5}|I_{\bar{w}}|\text{ and
}I^\vep_{\bar{w}}\subset\bigcap_{i=0}^{K}T^{-i}_{\varepsilon}\Delta^{\varepsilon}_{w_{i}}.$$
\bez\end{lemma}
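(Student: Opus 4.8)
The plan is to read off the claim from Lemmas~\ref{iteracje_1} and \ref{nowy} together with the smallness condition (\ref{zalozenie}), exactly as the sentence preceding the lemma announces. Write $I_{\bar{w}}=[T^{-k_1}\beta_{t_1},T^{-k_2}\beta_{t_2})$ with $0\le k_1,k_2\le K$ and $0\le t_1,t_2\le m-1$; this is possible because the endpoints of the level-$K$ cylinder $\bigcap_{i=0}^K T^{-i}\Delta_{w_i}$ are among the points $T^{-k}\beta_t$ with $0\le k\le K$. Then $I^\vep_{\bar{w}}=[T_\vep^{-k_1}\beta^\vep_{t_1},T_\vep^{-k_2}\beta^\vep_{t_2})$ is indeed an interval (not just a formal pair of points): by Lemma~\ref{nowy} the relation $T^{-k_1}\beta_{t_1}<T^{-k_2}\beta_{t_2}$ is preserved under the perturbation, so the left endpoint of $I^\vep_{\bar{w}}$ is strictly less than the right one.

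For the length estimate, apply Lemma~\ref{iteracje_1} to each endpoint: $|T^{-k_j}\beta_{t_j}-T_\vep^{-k_j}\beta^\vep_{t_j}|<m(2k_j+1)\vep\le m(2K+1)\vep$ for $j=1,2$. Hence the two endpoints each move by less than $m(2K+1)\vep$, so
\[
\bigl||I^\vep_{\bar{w}}|-|I_{\bar{w}}|\bigr|\le |T^{-k_1}\beta_{t_1}-T_\vep^{-k_1}\beta^\vep_{t_1}|+|T^{-k_2}\beta_{t_2}-T_\vep^{-k_2}\beta^\vep_{t_2}|<2m(2K+1)\vep .
\]
Now invoke the lower bound $|I_{\bar{w}}|>\delta$ (noted just above the lemma, since the endpoints of a nonempty level-$K$ cylinder are of the form $T^{-j_1}\beta_{t_1},T^{-j_2}\beta_{t_2}$ with $|j_i|\le K$ and distinct $\beta$'s, so their distance exceeds $\delta$ by the choice of $\delta$) together with (\ref{zalozenie}), which gives $2m(2K+1)\vep< 2m(2K+4)\vep<\tfrac15\delta<\tfrac15|I_{\bar{w}}|$. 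Therefore $|I^\vep_{\bar{w}}|>|I_{\bar{w}}|-\tfrac15|I_{\bar{w}}|=\tfrac45|I_{\bar{w}}|$, which is the first assertion.

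For the inclusion, take any $y\in I^\vep_{\bar{w}}$ and $0\le i\le K$; I must show $y\in T_\vep^{-i}\Delta^\vep_{w_i}$, i.e.\ $T_\vep^i y\in\Delta^\vep_{w_i}$. The interval $\Delta^\vep_{w_i}$ has endpoints $\beta^\vep_{w_i-1}$ and $\beta^\vep_{w_i}$, and by Lemma~\ref{iteracje_1} (the $s$-step statement applied to $T^i$, together with the ``implies'' clause) the partition points and their forward images under $T_\vep^i$ stay within $m(2i+1)\vep$ of the corresponding ones for $T$, and preserve which interval $\Delta_\cdot$ they land in. Comparing with the original cylinder structure $I_{\bar{w}}\subset\bigcap_{i=0}^K T^{-i}\Delta_{w_i}$ and using that the total displacement of all relevant points is at most $m(2K+1)\vep$, which by (\ref{zalozenie}) is far smaller than the minimal gap $\delta$ between distinct orbit points up to time $K$, one concludes that $T_\vep^i y$ cannot have crossed a partition point of $T_\vep$ relative to the unperturbed picture; hence it lies in $\Delta^\vep_{w_i}$. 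Running this over all $i$ yields $I^\vep_{\bar{w}}\subset\bigcap_{i=0}^K T_\vep^{-i}\Delta^\vep_{w_i}$.

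The routine bookkeeping — tracking exactly which of the finitely many points $T_\vep^{\pm i}\beta^\vep_t$ and $T_\vep^i y$ sit between which partition points — is the only mildly delicate part, and it is precisely what Lemmas~\ref{iteracje_1} and \ref{nowy} were set up to handle; the factor $m(2K+4)$ in (\ref{zalozenie}) is chosen with room to spare so that all these displacements together remain below $\tfrac1{10}\delta$. So there is no real obstacle: the proof is a direct combination of the two preceding lemmas with the quantitative choice of $\vep$, which is why the authors state it as ``a simple consequence'' and omit the details.
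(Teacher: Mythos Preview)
Your proposal is correct and is exactly the argument the paper has in mind: the authors give no proof at all, merely stating that the lemma ``is a simple consequence of Lemmas~\ref{iteracje_1} and \ref{nowy}'' together with (\ref{zalozenie}), and you have written out precisely that consequence. One tiny numerical slip: from Lemma~\ref{iteracje_1} the bound for $|T^{-k_j}\beta_{t_j}-T_\vep^{-k_j}\beta^\vep_{t_j}|$ is $m(2k_j+3)\vep$ (take $s=k_j+1$), not $m(2k_j+1)\vep$; this is harmless since you immediately pass to $2m(2K+4)\vep<\tfrac15\delta$ anyway.
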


\subsection{Disjointness theorems}\label{disjointness}
Fix $\tau\in{S}_m^0$.  Let
$T_{\lambda,\tau}:[0,1)\rightarrow[0,1)$ be an IET with the IDOC,
which has two recurrence words
$\bar{w}_1\in\mathcal{L}(T_{\lambda,\tau})$ and
$\bar{w}_2\in\mathcal{L}(T_{\lambda,\tau})$  with lengths $K_1$,
$K_2$ respectively and  such that
\begin{equation}\label{recwords}
l(\bar{w}_1)-l(\bar{w}_2)=b(S)\text{ for some }S\in\Sigma(\tau).
\end{equation} Let
$K=\max\{K_1,K_2\}+1$ and choose
$\varepsilon=\vep_{T_{\lambda,\tau},K}>0$ such that
(\ref{zalozenie}) holds. Let
$$\mathcal{K}=\{(\lambda^{\varepsilon},\tau):\ |\lambda^{\varepsilon}|=|\lambda|=1,\ |\lambda_i-\lambda_i^{\varepsilon}|<\varepsilon,\ 1\leq i\leq m \}.$$
Choose $\bar{w}_1^{ext}=w^1_0\ldots
w^1_K\in\mathcal{L}(T_{\lambda,\tau})$ and
$\bar{w}_2^{ext}=w^2_0\ldots
w^2_K\in\mathcal{L}(T_{\lambda,\tau})$ which are extensions of
$\bar{w}_1,\bar{w}_2$  such that $w^1_{K_1}=w^1_0$ and
$w^2_{K_2}=w^2_0$. Let
$\theta_r=|\bigcap_{i=0}^KT_{\lambda,\tau}^{-i}\Delta_{w^r_i}^{\lambda,\tau}|$
for $r=1,2$.

Using some standard Veech's arguments (see \cite{Ve4}), there
exist $k>1$ and maps $c_1,\ldots,c_k$ $(c_i=a$ or $b$ for
$i=1,\ldots,k)$ such that if $\tau_i=c_i\circ\ldots \circ
c_1\circ\tau$ $(i=1,\ldots,k)$ then
\begin{itemize}
\item[(i)] $\tau_k=\tau$,
\item[(ii)] the matrix $B=A(c_1,\tau)A(c_2,\tau_1)\ldots A(c_k,\tau_{k-1})$
has strictly positive entries.
\item[(iii)] $c_i=c_i(\rho,\tau)$ for any $1\leq i\leq k$.
\end{itemize}
Let
$\mathcal{M}=\{({B\lambda^{\varepsilon}}/{|B\lambda^{\varepsilon}|},\tau):\
(\lambda^{\varepsilon},\tau)\in\mathcal{K}\}$. Since $\mathcal{K}$
is open, the set $\mathcal{M}$  is also open in
$(\Lambda_m,\bar{d})$, hence
$\mathcal{M}\subset\Lambda_m\times\mathcal{R}(\tau)$ has positive
measure $\kappa$, where $\kappa$ is the invariant measure from
Theorem~\ref{tw_veech_o_mierze_na_klasie}.

\begin{notation}Denote by $\mathcal{A}$ the set of all
$(\rho,\pi)\in\Lambda_m\times \mathcal{R}(\tau)$ such that the IET
$T_{\rho,\pi}$  satisfies the IDOC and there exists an increasing
sequence  $\{k_n\}_{n\in\N}$ of natural numbers such that
$P^{k_n}(\rho,\pi)\in\mathcal{M}$.
\end{notation}

\begin{remark}\label{full}By the ergodicity and conservativity of
$P:(\Lambda_m\times\mathcal{R}(\tau),\kappa)\to(\Lambda_m\times\mathcal{R}(\tau),\kappa)$
(see Theorem~\ref{tw_veech_o_mierze_na_klasie}),
$\kappa((\Lambda_m\times\mathcal{R}(\tau))\setminus\mathcal{A})=0$.
\end{remark}

Take $(\rho,\pi)\in\mathcal{A}$ and let
$T=T_{\rho,\pi}:[0,1)\rightarrow[0,1)$. Let
$\Delta_1,\ldots,\Delta_m$ stand for the intervals exchanged by
$T$. Let $\{k_n\}_{n\in\N}$ be a sequence such that
$P^{k_n}(\rho,\pi)\in\mathcal{M}$. By the definition of
$\mathcal{M}$, $P^{k_n+k}(\rho,\pi)\in\mathcal{K}$ and
$A^{(k_n+k)}(\rho,\pi)=A^{(k_n)}(\rho,\pi)B$.

Let $(\rho^{(k_n)},\tau):=J^{k_n+k}(\rho,\pi)$ and let
$\Delta_{1}^{(k_n)},\ldots,\Delta_{m}^{(k_n)}$ stand for  the
intervals exchanged  by $T_{\rho^{(k_n)},\tau}$. Then
$(\rho^{(k_n)}/|\rho^{(k_n)}|,\tau):=P^{k_n+k}(\rho,\pi)\in\mathcal{K}$.
Let us apply Lemma~\ref{iteracje_3} to
$T_{\rho^{(k_n)}/|\rho^{(k_n)}|,\tau}$ and the words
$\bar{w}^{ext}_1$, $\bar{w}^{ext}_2$. After rescaling  we obtain
two intervals
$$I_1^{(k_n)}\subset \bigcap_{i=0}^{K_1}T^{-i}_{\rho^{(k_n)},\tau}\Delta^{(k_n)}_{w^1_i}\text{ and }I_{2}^{(k_n)}\subset
\bigcap_{i=0}^{K_2}T^{-i}_{\rho^{(k_n)},\tau}\Delta^{(k_n)}_{w^2_i}$$
such that $|I_1^{(k_n)}|\geq \frac{4}{5}\theta_1|\rho^{(k_n)}|$
and $|I_2^{(k_n)}|\geq\frac{4}{5}\theta_2|\rho^{(k_n)}|$.

By Remark~\ref{uwagaoh},
$h_j^{(k_n)}=\sum_{i=1}^{m}A^{(k_n+k)}_{ij}$ is  the first return
time of the interval $\Delta_{j}^{(k_n)}$ to $[0,|\rho^{(k_n)}|)$
for the action of $T$. Let
$$C_{r}^{(k_n)}:=\bigcup_{i=0}^{h_{w^r_0}^{(k_n)}-1}T^iI_r^{(k_n)}\text{ for }r=1,2.$$

\begin{lemma} \label{miara}
There exists $\alpha>0$ such that $\mu(C_{r}^{(k_n)})\geq
\alpha>0$ for any $n\in\N$ and for $r=1,2$.
\end{lemma}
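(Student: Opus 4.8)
The plan is to bound $\mu(C_r^{(k_n)})$ from below by comparing it with the total measure filled by the Rohlin tower over $\Delta^{(k_n)}_{w^r_0}$, and to control the relevant ratios by the quantity $\nu(B)$ attached to the positive matrix $B$. First I would record that, by construction, $C_r^{(k_n)}$ is the union of the first $h_{w^r_0}^{(k_n)}$ iterates of $I_r^{(k_n)}$ under $T$; since $I_r^{(k_n)}\subset\Delta^{(k_n)}_{w^r_0}$ and $h_{w^r_0}^{(k_n)}$ is the first return time of $\Delta^{(k_n)}_{w^r_0}$ to $[0,|\rho^{(k_n)}|)$ (Remark~\ref{uwagaoh}), these $h_{w^r_0}^{(k_n)}$ images are pairwise disjoint, so $\mu(C_r^{(k_n)})=h_{w^r_0}^{(k_n)}|I_r^{(k_n)}|$. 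Using the estimate $|I_r^{(k_n)}|\geq\frac45\theta_r|\rho^{(k_n)}|$ already obtained from Lemma~\ref{iteracje_3}, this gives $\mu(C_r^{(k_n)})\geq\frac45\theta_r\,h_{w^r_0}^{(k_n)}|\rho^{(k_n)}|$.

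Next I would compare $h_{w^r_0}^{(k_n)}|\rho^{(k_n)}|$ with $1=|\lambda|=\sum_{j=1}^m h_j^{(k_n)}\rho^{(k_n)}_j$ (equation~(\ref{wypelnic}), noting $|\rho|=1$ here). Writing $E=A^{(k_n+k)}(\rho,\pi)=A^{(k_n)}(\rho,\pi)B$, we have $h_j^{(k_n)}=\sum_i E_{ij}=e_j$ in the notation of the last displayed paragraph of Subsection~\ref{permutacja_eta}, and $\rho^{(k_n)}=E^{-1}\rho$, so $\rho^{(k_n)}_j\leq|\rho^{(k_n)}|$ for every $j$. Hence $1=\sum_j e_j\rho^{(k_n)}_j\leq|\rho^{(k_n)}|\sum_j e_j\leq m\,|\rho^{(k_n)}|\,e_{w^r_0}\cdot\max_j(e_j/e_{w^r_0})$. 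By~(\ref{1}), $\nu(E)=\nu(A^{(k_n)}(\rho,\pi)B)\leq\nu(B)$, and $e_j/e_k\leq\nu(E)\leq\nu(B)$ for all $j,k$; therefore $1\leq m\,\nu(B)\,e_{w^r_0}|\rho^{(k_n)}|=m\,\nu(B)\,h_{w^r_0}^{(k_n)}|\rho^{(k_n)}|$, i.e.\ $h_{w^r_0}^{(k_n)}|\rho^{(k_n)}|\geq\frac{1}{m\,\nu(B)}$. Combining with the previous paragraph yields $\mu(C_r^{(k_n)})\geq\frac{4\theta_r}{5m\,\nu(B)}$, so one may take $\alpha=\frac{4\min\{\theta_1,\theta_2\}}{5m\,\nu(B)}>0$, which is independent of $n$ since $B$, $m$, $\theta_1$, $\theta_2$ are fixed.

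The only genuinely delicate point is the uniformity: one must be sure that the matrix $B$ (hence $\nu(B)$) and the numbers $\theta_r$ do not depend on $n$. This is exactly how they were set up — $B$ and the words $\bar w_r^{ext}$, and thus $\theta_r=|\bigcap_{i=0}^K T_{\lambda,\tau}^{-i}\Delta^{\lambda,\tau}_{w^r_i}|$, were chosen once and for all before introducing the sequence $\{k_n\}$ — so the key input is the monotonicity $\nu(A^{(k_n)}(\rho,\pi)B)\leq\nu(B)$ from~(\ref{1}), which converts the potentially $n$-dependent period matrix $A^{(k_n+k)}(\rho,\pi)$ into a bound by the fixed quantity $\nu(B)$. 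Everything else is the disjointness of the Rohlin tower levels and the tower-filling identity~(\ref{wypelnic}). $\hfill\Box$
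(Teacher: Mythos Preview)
Your proof is correct and follows essentially the same route as the paper: compute $\mu(C_r^{(k_n)})=h_{w^r_0}^{(k_n)}|I_r^{(k_n)}|$, use the tower-filling identity~(\ref{wypelnic}), bound the height ratios by $\nu(A^{(k_n+k)})\leq\nu(B)$ via~(\ref{1}), and combine with $|I_r^{(k_n)}|\geq\frac45\theta_r|\rho^{(k_n)}|$.

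The only difference is a harmless inefficiency in your second paragraph: you first bound $\rho_j^{(k_n)}\leq|\rho^{(k_n)}|$, obtaining $1\leq|\rho^{(k_n)}|\sum_j e_j$, and then bound $\sum_j e_j\leq m\,\nu(B)\,e_{w^r_0}$, which introduces an extra factor $m$. The paper instead substitutes $h_j^{(k_n)}\leq\nu(B)h_l^{(k_n)}$ directly into $1=\sum_j h_j^{(k_n)}\rho_j^{(k_n)}$ and keeps the sum $\sum_j\rho_j^{(k_n)}=|\rho^{(k_n)}|$ intact, giving $1\leq\nu(B)\,h_l^{(k_n)}|\rho^{(k_n)}|$ without the factor $m$, and hence the slightly better constant $\alpha=\dfrac{4\min\{\theta_1,\theta_2\}}{5\nu(B)}$. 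This sharper value is irrelevant for the lemma, so your argument is fine.
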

\begin{proof} In view of (\ref{wypelnic}),
$1=\sum_{j=1}^{m}h_j^{(k_n)}\rho_{j}^{(k_n)}$. Moreover, from
$(\ref{1})$,
$$h_j^{(k_n)}\leq\nu(A^{(k_n+k)})h_l^{(k_n)}=\nu\left(A^{(k_n)}(\rho,\pi)B\right)h_l^{(k_n)}\leq\nu(B)h_l^{(k_n)}$$
for all $ 1\leq j,l\leq m$. It follows that
\[1\leq\sum_{j=1}^{m}\nu(B)h_l^{(k_n)}\rho_j^{(k_n)}=\nu(B)h_l^{(k_n)}|\rho^{(k_n)}|\]
for any $1\leq l\leq m$. Since
$|I_r^{(k_n)}|\geq\frac{4}{5}\theta_r|\rho^{(k_n)}|$ for $r=1,2$,
we obtain
$$1\leq
\nu(B)h_{w_0^r}^{(k_n)}\frac{|I_r^{(k_n)}|}{\frac{4}{5}\theta_r}.$$
Hence
$$0<\alpha:=\frac{4\min(\theta_1,\theta_2)}{5\nu(B)}\leq
h_{w_0^r}^{(k_n)}|I_r^{(k_n)}|=\mu\left(\bigcup_{i=0}^{h_{w_0^r}^{(k_n)}-1}T^iI_{r}^{(k_n)}\right)=\mu(C_{r}^{(k_n)})$$
for $r=1,2$.
\end{proof}

Let
$$q_r^{(k_n)}=|A^{(k_n+k)}(\rho,\pi)l(\bar{w}_r)|=\sum_{j=0}^{K_r-1}h^{(k_n)}_{w^r_j}\text{
for }r=1,2.$$ Since $I_r^{(k_n)}\subset
\bigcap_{i=0}^{K_r}T^{-i}_{\rho^{(k_n)},\tau}\Delta^{(k_n)}_{w^r_i}$,
by Remark~\ref{iter},
\begin{equation}\label{doduw}
T_{\rho^{(k_n)},\tau}^{K_r}x=T^{q_r^{(k_n)}}x\text{ for every
}x\in I_r^{(k_n)}.
\end{equation}

\begin{lemma}\label{12} For  $r=1,2$ we have
\begin{itemize}
\item[(i)] $\mu(C_{r}^{(k_n)}\bigtriangleup T^{-1}C_{r}^{(k_n)})\rightarrow 0$ as $n\rightarrow\infty$,
\item[(ii)] $\sup_{x\in C_{r}^{(k_n)}}|x-T^{q_r^{(k_n)}}x|\rightarrow 0$ as $n\rightarrow\infty$.
\end{itemize}
\end{lemma}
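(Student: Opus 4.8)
The plan is to analyze the two Rohlin-tower-like sets $C_r^{(k_n)}$ directly in terms of the orbit segments they contain. Recall that $I_r^{(k_n)}$ sits inside the Rohlin tower over $\Delta_{w^r_0}^{(k_n)}$ of height $h_{w^r_0}^{(k_n)}$, so $C_r^{(k_n)}=\bigcup_{i=0}^{h_{w^r_0}^{(k_n)}-1}T^iI_r^{(k_n)}$ is a genuine subtower. For part (i), I would observe that $C_r^{(k_n)}$ and $T^{-1}C_r^{(k_n)}$ differ only in the top and bottom floors of this subtower: $C_r^{(k_n)}\triangle T^{-1}C_r^{(k_n)}\subset I_r^{(k_n)}\cup T^{h_{w^r_0}^{(k_n)}-1}I_r^{(k_n)}$ (up to a set of measure zero coming from how $T$ acts across the base interval). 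Hence $\mu(C_r^{(k_n)}\triangle T^{-1}C_r^{(k_n)})\leq 2|I_r^{(k_n)}|\leq 2|\rho^{(k_n)}|$. Since $|\rho^{(k_n)}|\to 0$ as $n\to\infty$ — because $P^{k_n+k}(\rho,\pi)$ stays in the compact set $\mathcal{K}$ while $J^{k_n+k}(\rho,\pi)=A^{(k_n+k)}(\rho,\pi)^{-1}$ applied to $\rho$ forces the lengths to shrink, the matrices $A^{(k_n)}$ having entries tending to infinity by positivity of $B$ and (\ref{wypelnic}) — this gives (i).

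For part (ii), the point is (\ref{doduw}): for $x\in I_r^{(k_n)}$ we have $T^{q_r^{(k_n)}}x=T_{\rho^{(k_n)},\tau}^{K_r}x$, and since $\bar{w}_r$ is a recurrence word and $w^r_{K_r}=w^r_0$, the point $T_{\rho^{(k_n)},\tau}^{K_r}x$ lands back in $\Delta_{w^r_0}^{(k_n)}$. In fact, since $\bar w_r^{ext}$ has length $K+1$ and $I_r^{(k_n)}\subset\bigcap_{i=0}^K T_{\rho^{(k_n)},\tau}^{-i}\Delta^{(k_n)}_{w^r_i}$, the induced map $T_{\rho^{(k_n)},\tau}^{K_r}$ restricted to $I_r^{(k_n)}$ is a translation, so $T^{q_r^{(k_n)}}$ is a translation on $I_r^{(k_n)}$ by an amount bounded in absolute value by $|\rho^{(k_n)}|$. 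I would then propagate this estimate up the tower: for a general $y\in C_r^{(k_n)}$, write $y=T^ix$ with $x\in I_r^{(k_n)}$ and $0\leq i<h_{w^r_0}^{(k_n)}$. Then $T^{q_r^{(k_n)}}y=T^i(T^{q_r^{(k_n)}}x)$, and since $T^{q_r^{(k_n)}}x$ also lies in $\Delta_{w^r_0}^{(k_n)}$ (same floor-$0$ interval as $x$) and differs from $x$ by less than $|\rho^{(k_n)}|$, the two points $x$ and $T^{q_r^{(k_n)}}x$ travel through the same sequence of intervals $\Delta_1,\dots,\Delta_m$ under the first $i$ iterates of $T$ — they sit in the same column of the Rohlin tower of Remark~\ref{uwagaoh} — so $|T^iy - y|=|T^i(T^{q_r^{(k_n)}}x)-T^ix|=|T^{q_r^{(k_n)}}x-x|<|\rho^{(k_n)}|$, the translation amount being preserved since $T$ acts as a translation on each $\Delta_j$. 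Therefore $\sup_{y\in C_r^{(k_n)}}|y-T^{q_r^{(k_n)}}y|\leq|\rho^{(k_n)}|\to 0$.

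The main obstacle is the claim $|\rho^{(k_n)}|\to 0$, since everything else reduces to it. To justify it I would argue as follows: by (\ref{wypelnic}) applied at step $k_n+k$, $\sum_{j=1}^m h_j^{(k_n)}\rho_j^{(k_n)}=1$; from the proof of Lemma~\ref{miara} we already have $1\leq \nu(B)\,h_l^{(k_n)}|\rho^{(k_n)}|$ for every $l$, hence $h_l^{(k_n)}\geq 1/(\nu(B)|\rho^{(k_n)}|)$. On the other hand the first-return heights $h_j^{(k_n)}$ must tend to infinity: $A^{(k_n+k)}(\rho,\pi)=A^{(k_n)}(\rho,\pi)B$ and, since $(\rho,\pi)\in\mathcal{A}$ with $k_n\to\infty$, infinitely many positive-matrix blocks $B$ are accumulated along the orbit (every visit to $\mathcal{M}$ contributes one), so $\min_j h_j^{(k_n)}\to\infty$; combined with $\sum_j h_j^{(k_n)}\rho_j^{(k_n)}=1$ this forces $\max_j \rho_j^{(k_n)}\to 0$, i.e. $|\rho^{(k_n)}|\to 0$. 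A cleaner alternative, which I would use if the above bookkeeping gets delicate, is simply to note that $I_r^{(k_n)}\subset[0,|\rho^{(k_n)}|)$ and that $|\rho^{(k_n)}|=|A^{(k_n+k)}(\rho,\pi)^{-1}\rho|\to 0$ because IDOC guarantees the nested intervals $[0,|\rho^{(j)}|)$ shrink to a point (the left endpoint $0$) as $j\to\infty$ — this is standard for Rauzy induction under IDOC and is the content behind Remark~\ref{uwagaoh}. Either route delivers $|\rho^{(k_n)}|\to 0$, completing the proof of both (i) and (ii).
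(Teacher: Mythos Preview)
Your proof is correct and follows essentially the same route as the paper: the Rohlin-tower structure gives $\mu(C_r^{(k_n)}\triangle T^{-1}C_r^{(k_n)})\leq 2|I_r^{(k_n)}|\leq 2|\rho^{(k_n)}|\to 0$, and for (ii) both you and the paper lift the base observation $T^{q_r^{(k_n)}}x\in\Delta^{(k_n)}_{w^r_0}$ up the tower via $T^i$ (the paper bounds the distance by $|\Delta^{(k_n)}_{w^r_0}|$ rather than your sharper $|\rho^{(k_n)}|$, but the mechanism is identical). Two small notes: in your displayed identity you wrote $|T^iy-y|$ where you clearly meant $|T^{q_r^{(k_n)}}y-y|$, and the paper simply invokes IDOC for $|\rho^{(k_n)}|\to 0$ rather than the extra bookkeeping you supply.
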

\begin{proof}
(i). Since $\{T^iI_r^{(k_n)}:0\leq i<h_{w^r_0}^{(k_n)}\}$ is a
Rohlin tower, $$TC_{r}^{(k_n)}\bigtriangleup C_{r}^{(k_n)}\subset
I_r^{(k_n)}\cup T^{h_{w_0^r}^{(k_n)}}I_r^{(k_n)}.$$ It follows
that
$$\mu(T^{-1}C_{r}^{(k_n)}\bigtriangleup C_{r}^{(k_n)})\leq
2\mu(I_r^{(k_n)})\leq 2|\rho^{(k_n)}|\rightarrow 0\text{ as
}n\rightarrow\infty.$$
 (ii).
For each $x\in C_{r}^{(k_n)}$ there exist $y\in I_r^{(k_n)}$ and
$0\leq i<h_{w^r_0}^{(k_n)}$ such that $x=T^i y$. In view of
(\ref{doduw}) we have
$T^{q_r^{(k_n)}}y=T_{\rho^{(k_n)},\tau}^{K_r}y$, so
\[T^{q_r^{(k_n)}}x=T^{q_r^{(k_n)}}T^i y=T^iT^{q_r^{(k_n)}}y=T^iT_{\rho^{(k_n)},\tau}^{K_r}y.\]
Since $I_r^{(k_n)}\subset
\bigcap_{i=0}^{K_r}T^{-i}_{\rho^{(k_n)},\tau}\Delta^{(k_n)}_{w^r_i}$,
we have $y\in\Delta^{(k_n)}_{w^r_0}$ and
$T_{\rho^{(k_n)},\tau}^{K_r}y\in\Delta^{(k_n)}_{w^r_{K_r}}=\Delta^{(k_n)}_{w^r_0}$.
It follows that $x=T^iy\in T^i\Delta^{(k_n)}_{w^r_0}$ and
$T^{q_r^{(k_n)}}x=T^iT_{\rho^{(k_n)},\tau}^{K_r}y\in
T^i\Delta^{(k_n)}_{w^r_0}$. Since $T^i\Delta^{(k_n)}_{w^r_0}$ is
an interval of length $|\Delta^{(k_n)}_{w^r_0}|$, we obtain
\[|x-T^{q_r^{(k_n)}}x|<|\Delta_{w^r_0}^{(k_n)}|.\]
Consequently, in view of the IDOC,
$$\sup_{x\in
C_{r}^{(k_n)}}|x-T^{q_r^{(k_n)}}x|<|\Delta_{w^r_0}^{(k_n)}|\rightarrow
0\text{ as }n\rightarrow\infty.$$
\end{proof}

Let $b_{\tau}:=b(S)$ stand for the vector associated with a cyclic
set $S\subset\Sigma(\tau)$ so that (\ref{recwords}) holds. Let
$$b^{(n)}_{\pi}=A^{(k_n+k)}b_{\tau}.$$ By
Lemma~\ref{ww}, $b^{(n)}_{\pi}=b(S')$ for some $S'\in\Sigma(\pi)$.

\begin{remark}\label{liczpop}
Suppose that $x_r\in I_r^{(k_n)}\subset
\bigcap_{i=0}^{K_r}T^{-i}_{\rho^{(k_n)},\tau}\Delta^{(k_n)}_{w^r_i}$
for $r=1,2$. Let us consider the word
$\bar{w}'_r\in\{1,\ldots,m\}^{q_r^{(k_n)}}$ determined by
$T_{\rho,\pi}^ix_r\in\Delta_{w^{'r}_i}$ for $0\leq i<q_r^{(k_n)}$
and $r=1,2$. From (\ref{zmianah}) we have
$l(\bar{w}'_r)=A^{(k_n+k)}l(\bar{w}_r)$ for $r=1,2$. It follows
that
\begin{equation}\label{zmroznica}
l(\bar{w}'_1)-l(\bar{w}'_2)=A^{(k_n+k)}(l(\bar{w}_1)-l(\bar{w}_2))=A^{(k_n+k)}b_{\tau}=b^{(n)}_{\pi}=b(S')
\end{equation}
for some $S'\in\Sigma(\pi)$. Choose $n$ such that
$[0,|\rho^{(k_n)}|)\subset\Delta_1$. Since
$T_{\rho,\pi}^{q_r^{(k_n)}}x_r=T^{K_r}_{\rho^{(k_n)},\tau}x_r\in
\Delta^{(k_n)}_{w^r_0}$, $x_r\in \Delta^{(k_n)}_{w^r_0}$ and
$\Delta^{(k_n)}_{w^r_0}\subset \Delta_1$, we conclude that
$\bar{w}'_r$ is a recurrence word of $T_{\rho,\pi}$ for $r=1,2$.
\end{remark}
This gives the following conclusion.

\begin{lemma}\label{przeniesienie}
Suppose that $(\lambda,\tau)\in\Lambda_m\times{S}_m^0$ is a pair
such that the IET $T_{\lambda,\tau}$  satisfies the IDOC and
$T_{\lambda,\tau}$ has recurrence words
$\bar{w}_1,\;\bar{w}_2\in\mathcal{L}(T_{\lambda,\tau})$ such that
$l(\bar{w}_1)-l(\bar{w}_2)=b(S)$ for some $S\in\Sigma(\tau)$. If
$\pi\in\mathcal{R}(\tau)$ then for almost every $\rho\in\Lambda_m$
the IET $T_{\rho,\pi}$ has recurrence words
$\bar{w}'_1,\;\bar{w}'_2\in\mathcal{L}(T_{\rho,\pi})$ such that
$l(\bar{w}'_1)-l(\bar{w}'_2)=b(S')$ for some $S'\in\Sigma(\pi)$.
\bez
\end{lemma}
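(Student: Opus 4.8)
The plan is to read the statement off the machinery already assembled in this subsection: essentially nothing new is required beyond combining Remark~\ref{full}, the construction of the intervals $I_r^{(k_n)}$, and Remark~\ref{liczpop}, together with one elementary measure--theoretic passage from a $\kappa$-almost-everywhere statement on $\Lambda_m\times\mathcal{R}(\tau)$ to a Lebesgue-almost-everywhere statement on the fibre over the fixed $\pi$.

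First I would fix $\pi\in\mathcal{R}(\tau)$ and recall from Remark~\ref{full} that $(\Lambda_m\times\mathcal{R}(\tau))\setminus\mathcal{A}$ is $\kappa$-null. By Theorem~\ref{tw_veech_o_mierze_na_klasie} the measure $\kappa$ is equivalent to the product of the Lebesgue measure on $\Lambda_m$ with the counting measure on the finite set $\mathcal{R}(\tau)$; since $\{\pi\}$ is an atom of the latter, it follows that the fibre $\{\rho\in\Lambda_m:(\rho,\pi)\in\mathcal{A}\}$ has full Lebesgue measure in $\Lambda_m$. I would intersect it with the (also full-measure) set on which $\rho_1>0$, so that $\Delta_1=[0,\rho_1)$ has positive length, and claim that the conclusion holds for every $\rho$ in the resulting set.

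Then, fixing such a $\rho$ and putting $T=T_{\rho,\pi}$, I would choose an increasing sequence $\{k_n\}$ with $P^{k_n}(\rho,\pi)\in\mathcal{M}$. For each $n$ the construction preceding Lemma~\ref{miara} produces non-empty intervals $I_1^{(k_n)},I_2^{(k_n)}$ with $|I_r^{(k_n)}|\ge\frac{4}{5}\theta_r|\rho^{(k_n)}|>0$, $r=1,2$. Since $T$ satisfies the IDOC it is minimal, so the Rauzy-induction lengths satisfy $|\rho^{(k_n)}|\to 0$; hence I may fix $n$ large enough that $[0,|\rho^{(k_n)}|)\subset\Delta_1$. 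For such $n$, I pick $x_r\in I_r^{(k_n)}$ and let $\bar{w}'_r$ be the word of length $q_r^{(k_n)}$ read off the $T$-orbit of $x_r$. Remark~\ref{liczpop} then gives exactly what is needed: $\bar{w}'_1,\bar{w}'_2\in\mathcal{L}(T_{\rho,\pi})$ are recurrence words of $T_{\rho,\pi}$, and by (\ref{zmroznica}), together with Lemma~\ref{ww} applied along the $k_n+k$ Rauzy steps from $\pi$ to $\tau$,
$$l(\bar{w}'_1)-l(\bar{w}'_2)=A^{(k_n+k)}(\rho,\pi)\,b_\tau=b(S')\text{ for some }S'\in\Sigma(\pi).$$

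I do not expect a genuine obstacle here, as the lemma is a bookkeeping consequence of the preceding work; the only points needing a word of care are the descent to the fibre over the fixed $\pi$ (using the equivalence of $\kappa$ with Lebesgue measure and the finiteness of $\mathcal{R}(\tau)$) and the choice of $n$ with $[0,|\rho^{(k_n)}|)\subset\Delta_1$ (using that the Rauzy lengths of a minimal IET tend to zero) --- and even the latter is, strictly speaking, already carried out inside Remark~\ref{liczpop}.
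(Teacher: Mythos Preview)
Your proposal is correct and follows exactly the approach the paper intends: the paper presents Lemma~\ref{przeniesienie} as an immediate consequence of Remark~\ref{liczpop} (combined implicitly with Remark~\ref{full}), writing only ``This gives the following conclusion'' before the statement and closing with a box. You have simply made explicit the two small points the paper leaves tacit---the passage from $\kappa$-a.e.\ to Lebesgue-a.e.\ on the fibre $\Lambda_m\times\{\pi\}$ via the equivalence in Theorem~\ref{tw_veech_o_mierze_na_klasie}, and the choice of $n$ with $[0,|\rho^{(k_n)}|)\subset\Delta_1$---both of which are handled correctly.
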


For any $h=(h_1,\ldots,h_m)\in\R_+^m$ denote by
$f_h:[0,1)\rightarrow\mathbb{N}$ the step function
$f_h=\sum_{i=1}^{m}h_i\chi_{\Delta_i^{\rho}}$.

\begin{lemma}\label{stalykocykl} For every $x\in C_{r}^{(k_n)}$, $r=1,2$ we have
$$f_h^{(q_r^{(k_n)})}(x)=h\cdot A^{(k_n+k)}\cdot l(\bar{w}_r).$$
\end{lemma}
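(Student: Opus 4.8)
The plan is to express the value $f_h^{(q_r^{(k_n)})}(x)$ as a weighted count of the visits of a $T$-orbit segment of length $q_r^{(k_n)}$ to the exchanged intervals $\Delta_1,\dots,\Delta_m$ (where $\Delta_i=\Delta_i^\rho$), and then to show that this count is independent of the choice of $x\in C_r^{(k_n)}$. The elementary observation underneath everything is that, by the definition of $f_h=\sum_{i=1}^m h_i\chi_{\Delta_i^\rho}$ and of the cocycle, one has $f_h^{(N)}(z)=h\cdot l(\bar u)$ for every $z\in[0,1)$ and $N\ge 1$, where $\bar u=u_0\dots u_{N-1}$ is the $T$-coding word determined by $T^jz\in\Delta_{u_j}$ for $0\le j<N$; thus the statement is really a claim about the population vector of this coding word.

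For a point $y$ in the base $I_r^{(k_n)}$ of the tower, the required identity has essentially already been obtained. Since $y\in I_r^{(k_n)}\subset\bigcap_{i=0}^{K_r}T^{-i}_{\rho^{(k_n)},\tau}\Delta^{(k_n)}_{w^r_i}$, Remark~\ref{iter} together with formula~\eqref{zmianah} — this is exactly the computation carried out in Remark~\ref{liczpop} — gives that the $T$-coding word $\bar w'_r$ of $y,Ty,\dots,T^{q_r^{(k_n)}-1}y$ has population vector $l(\bar w'_r)=A^{(k_n+k)}l(\bar w_r)$, and hence $f_h^{(q_r^{(k_n)})}(y)=h\cdot l(\bar w'_r)=h\cdot A^{(k_n+k)}l(\bar w_r)$.

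The remaining task is to transport this equality from the base to the whole column. Writing $x\in C_r^{(k_n)}$ as $x=T^iy$ with $y\in I_r^{(k_n)}$ and $0\le i<h^{(k_n)}_{w^r_0}$, I would use the additivity of the cocycle in two ways, splitting $f_h^{(i+q_r^{(k_n)})}(y)$ at level $i$ and at level $q_r^{(k_n)}$, to get
\[f_h^{(q_r^{(k_n)})}(x)=f_h^{(q_r^{(k_n)})}(y)+\big(f_h^{(i)}(T^{q_r^{(k_n)}}y)-f_h^{(i)}(y)\big),\]
and then argue that the bracket vanishes. Indeed, by~\eqref{doduw} we have $T^{q_r^{(k_n)}}y=T^{K_r}_{\rho^{(k_n)},\tau}y\in\Delta^{(k_n)}_{w^r_{K_r}}=\Delta^{(k_n)}_{w^r_0}$ — here the normalization $w^r_{K_r}=w^r_0$ of $\bar w_r^{ext}$ is used — while also $y\in\Delta^{(k_n)}_{w^r_0}$. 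The interval $\Delta^{(k_n)}_{w^r_0}$ has first return time $h^{(k_n)}_{w^r_0}$ under $T$ to $[0,|\rho^{(k_n)}|)$, and the levels $T^l\Delta^{(k_n)}_{w^r_0}$, $0\le l<h^{(k_n)}_{w^r_0}$, of the corresponding Rohlin tower each lie inside a single $\Delta_{j(l)}$ (Remark~\ref{uwagaoh}); consequently, for any $z\in\Delta^{(k_n)}_{w^r_0}$ and $0\le i<h^{(k_n)}_{w^r_0}$ one has $f_h^{(i)}(z)=\sum_{l=0}^{i-1}h_{j(l)}$, a number independent of $z$. Taking $z=y$ and $z=T^{q_r^{(k_n)}}y$ shows the bracket is $0$, so $f_h^{(q_r^{(k_n)})}(x)=f_h^{(q_r^{(k_n)})}(y)=h\cdot A^{(k_n+k)}l(\bar w_r)$.

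The main obstacle I anticipate is precisely this last step: one is not merely translating an orbit segment but exchanging the bottom $i$ levels of the $\Delta^{(k_n)}_{w^r_0}$-column through $y$ for the bottom $i$ levels of the $\Delta^{(k_n)}_{w^r_0}$-column through $T^{q_r^{(k_n)}}y$, and the reason these two pieces of the Birkhoff sum cancel rests on two facts that both have to be invoked with care — that all columns of a Rohlin tower obtained from Rauzy induction carry the same $T$-coding level by level, and that the recurrence normalization $w^r_{K_r}=w^r_0$ guarantees that $T^{q_r^{(k_n)}}y$ returns to the \emph{same} base interval $\Delta^{(k_n)}_{w^r_0}$ rather than to some other $\Delta^{(k_n)}_j$.
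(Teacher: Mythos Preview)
Your proof is correct and follows essentially the same route as the paper: handle the base point $y\in I_r^{(k_n)}$ via Remark~\ref{liczpop}, then use the cocycle identity to reduce $f_h^{(q_r^{(k_n)})}(T^iy)$ to $f_h^{(q_r^{(k_n)})}(y)$ plus the difference $f_h^{(i)}(T^{q_r^{(k_n)}}y)-f_h^{(i)}(y)$, and show this difference vanishes because both $y$ and $T^{q_r^{(k_n)}}y$ lie in $\Delta^{(k_n)}_{w^r_0}$ and each level $T^l\Delta^{(k_n)}_{w^r_0}$ sits inside a single $\Delta_{j(l)}$. The paper phrases the last step as a term-by-term vanishing $f_h(T^{q_r^{(k_n)}}T^ly)=f_h(T^ly)$ for $0\le l<i$, which is the same observation written slightly differently.
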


\begin{proof}
First suppose that $x\in I_r^{(k_n)}\subset
\bigcap_{i=0}^{K_r}T^{-i}_{\rho^{(k_n)},\tau}\Delta^{(k_n)}_{w^r_i}$.
Since $T_{\rho,\pi}^ix\in\Delta_{w^{'r}_i}$ for $0\leq
i<q_r^{(k_n)}$, by the definition of $f_h$,
\[f_h^{(q^{(k_n)}_r)}(x)=\sum_{i=0}^{q_r^{(k_n)}-1}h_{w^{'r}_i}=h\cdot l(\bar{w}_r').\]
In view of Remark~\ref{liczpop},
\[f_h^{(q^{(k_n)}_r)}(x)=h\cdot l(\bar{w}_r')=h\cdot A^{(k_n+k)}\cdot l(\bar{w}_r).\]

If $x\in C_{r}^{(k_n)}$ then there exist $y\in I_r^{(k_n)}$ and
$0\leq i<h_{w^r_0}^{(k_n)}$ such that $x=T^i y$. In view of the
proof of Lemma~\ref{12}, $x,\ T^{q_r^{(k_n)}}x\in
T^i\Delta^{(k_n)}_{w^r_0}$, and hence $x,\ T^{q_r^{(k_n)}}x\in
\Delta_j$ for some $1\leq j\leq m$. Thus
$f_h(T^{q_r^{(k_n)}}x)=f_h(x)$ for every $x\in C_{r}^{(k_n)}$. It
follows that
\begin{eqnarray*}f_h^{(q^{(k_n)}_r)}(x)&=&f_h^{(q^{(k_n)}_r)}(T^i
y)=f_h^{(q^{(k_n)}_r)}(y)+f_h^{(i)}(T^{(q^{(k_n)}_r)}y)-f_h^{(i)}(y)\\
&=&f_h^{(q^{(k_n)}_r)}(y)+\sum_{l=0}^{i-1}(f_h(T^{(q^{(k_n)}_r)}T^ly)-f_h(T^ly))\\
&=&f_h^{(q^{(k_n)}_r)}(y)=h\cdot A^{(k_n+k)}\cdot l(\bar{w}_r).
\end{eqnarray*}
\end{proof}

Setting $a_r^{(k_n)}:=h\cdot A^{(k_n+k)}\cdot l(\bar{w}_r)$ we
obtain
$$\frac{1}{\mu(C_{r}^{(k_n)})}\left(\left.\left(f_h^{(q_r^{(k_n)})}-a_r^{(k_n)}\right)\right|_{C_{r}^{(k_n)}}\right)_{*}\left(\mu|_{C_{r}^{(k_n)}}\right)=
\delta_0\text{ for } r=1,2.$$

Now we apply Proposition~\ref{twozbdoc} together with
Lemmas~\ref{miara} and \ref{12} to obtain the following result.

\begin{theorem}\label{zb}
For $r=1,2$ there exist $\alpha_r>0$ and $\Phi_r\in
\mathcal{J}(T^{f_h})$ such that
\[(T^{f_h})_{a_r^{(k_n)}}\rightarrow\alpha_r
Id+(1-\alpha_r)\Phi_r\] in weak operator topology as
$n\rightarrow\infty$. \bez
\end{theorem}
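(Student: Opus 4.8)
The plan is to apply Proposition~\ref{twozbdoc} directly, since all of its hypotheses have essentially been verified in the preceding lemmas. First I would fix $r\in\{1,2\}$ and assemble the data: take the compact metric space to be $[0,1)$ with the usual metric, the automorphism to be $T=T_{\rho,\pi}$, the increasing sequence of natural numbers to be $\{q_r^{(k_n)}\}_n$, and the Borel sets to be $\{C_r^{(k_n)}\}_n$. Lemma~\ref{miara} supplies $\mu(C_r^{(k_n)})\geq\alpha>0$, and after passing to a subsequence (which is harmless) we may assume $\mu(C_r^{(k_n)})\to\alpha_r$ for some $\alpha_r\geq\alpha>0$. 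Lemma~\ref{12}(i) gives $\mu(C_r^{(k_n)}\triangle T^{-1}C_r^{(k_n)})\to 0$, and Lemma~\ref{12}(ii) gives $\sup_{x\in C_r^{(k_n)}}|x-T^{q_r^{(k_n)}}x|\to 0$, which is exactly $\sup_{x\in C_r^{(k_n)}}d(x,T^{q_r^{(k_n)}}x)\to 0$.

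Next I would handle the roof function and the translation constants. Take $f=f_h+1$ (or any positive step function bounded away from zero built from $h$; the point is only that $f$ must be positive and bounded away from zero, and since $C_r^{(k_n)}$ consists of finitely many intervals the relevant cocycle is literally constant). Set $a_r^{(k_n)}=h\cdot A^{(k_n+k)}\cdot l(\bar w_r)$ (adjusted additively by $q_r^{(k_n)}$ if one uses $f_h+1$ in place of $f_h$). By Lemma~\ref{stalykocykl}, $f_h^{(q_r^{(k_n)})}(x)=a_r^{(k_n)}$ identically on $C_r^{(k_n)}$, so the sequence $\int_{C_r^{(k_n)}}|f_h^{(q_r^{(k_n)})}(x)-a_r^{(k_n)}|^2\,d\mu(x)$ is identically zero, hence bounded, and the pushforward measures $\frac{1}{\mu(C_r^{(k_n)})}\big((f_h^{(q_r^{(k_n)})}-a_r^{(k_n)})|_{C_r^{(k_n)}}\big)_*(\mu|_{C_r^{(k_n)}})$ equal $\delta_0$ for every $n$, so they converge weakly to $P=\delta_0$.

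With all hypotheses of Proposition~\ref{twozbdoc} in place, the proposition yields that $\{(T^{f_h})_{a_r^{(k_n)}}\}$ converges weakly to $\alpha_r\int_{\mathbb R}(T^{f_h})_{-t}\,dP(t)+(1-\alpha_r)J_r=\alpha_r(T^{f_h})_{0}+(1-\alpha_r)J_r=\alpha_r\,Id+(1-\alpha_r)J_r$, where $J_r\in\mathcal{J}(T^{f_h})$, using that $P=\delta_0$ collapses the integral to the time-$0$ map, namely the identity operator. Renaming $J_r$ as $\Phi_r$ gives the claim for each $r\in\{1,2\}$.

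I do not expect a genuine obstacle here: the theorem is a bookkeeping corollary of Proposition~\ref{twozbdoc} and Lemmas~\ref{miara}, \ref{12}, \ref{stalykocykl}. The only point requiring a word of care is the passage to a subsequence to guarantee that $\mu(C_r^{(k_n)})$ actually converges (rather than merely staying bounded below), and similarly that the operator sequence converges — but the weak operator topology on $\mathcal{J}(T^{f_h})$ is compact and metrizable, and Proposition~\ref{twozbdoc} is stated so as to produce the limit on the full sequence once the three displayed conditions hold, so this is a formality. One should also make sure the roof function used is the same one for which $T^{f_h}$ has been defined; since $f_h$ need not be bounded away from zero if some $h_i=0$, the cleanest fix is to assume $h\in\R_+^m$ has strictly positive entries, or to replace $f_h$ by $f_h+1$ throughout, absorbing the extra $q_r^{(k_n)}$ into $a_r^{(k_n)}$.
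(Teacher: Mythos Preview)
Your proposal is correct and matches the paper's approach exactly: the theorem is stated immediately after the displayed identity for the pushforward measure, and the paper's entire proof is the sentence ``Now we apply Proposition~\ref{twozbdoc} together with Lemmas~\ref{miara} and \ref{12}.'' Your only unnecessary detour is the $f_h+1$ business---the theorem is about the flow $T^{f_h}$, so switching the roof function would prove a statement about a different flow; the correct resolution (which you also mention) is simply to take $h$ with strictly positive entries so that $f_h$ is bounded away from zero, and then apply Proposition~\ref{twozbdoc} to $f_h$ directly.
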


\begin{theorem} \label{vvv}
Assume that $T$ is ergodic, $h\notin H(\pi)$ and $\theta_h=h\cdot
b_{\pi}^{(n)}\neq 0$. Then there exist a sequence $\{a_n\}$,
$a_n\to+\infty$, positive numbers $\alpha_1,\alpha_2$ with
$\alpha_1+\alpha_2\leq1$, and $\Phi\in \mathcal{J}(T^{f_h})$ such
that
$$(T^{f_h})_{a_n}\rightarrow\alpha_1Id+\alpha_2(T^{f_h})_{\theta_h}+(1-\alpha_1-\alpha_2)\Phi$$
  in weak operator topology as  $n\rightarrow\infty$.
Hence the special flow $T^{f_h}$ is disjoint from all weakly
mixing ELF flows.
\end{theorem}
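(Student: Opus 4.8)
The plan is to apply Theorem~\ref{zb} to both recurrence words $\bar{w}_1,\bar{w}_2$, observe that the two associated sequences of times differ by the fixed nonzero constant $\theta_h$, feed a single sequence into Lemma~\ref{alfy}, and conclude via Proposition~\ref{proprozlflow}. First I would record two facts: (a) $T^{f_h}$ is ergodic, because $T$ is ergodic and $f_h$ is a positive integrable roof function; (b) writing $a_r^{(k_n)}=h\cdot A^{(k_n+k)}\cdot l(\bar{w}_r)$ as in Theorem~\ref{zb}, there are $\alpha_r>0$ and $\Phi_r\in\mathcal{J}(T^{f_h})$ with $(T^{f_h})_{a_r^{(k_n)}}\to\alpha_r Id+(1-\alpha_r)\Phi_r$ weakly for $r=1,2$. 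The crucial point is that, since $l(\bar{w}_1)-l(\bar{w}_2)=b_\tau$ and $b^{(n)}_\pi=A^{(k_n+k)}b_\tau$, we get $a_1^{(k_n)}-a_2^{(k_n)}=h\cdot A^{(k_n+k)}(l(\bar{w}_1)-l(\bar{w}_2))=h\cdot b^{(n)}_\pi=\theta_h$; and since by Lemma~\ref{ww} the vector $b^{(n)}_\pi$ lies in the finite set $\{b(S'):S'\in\Sigma(\pi)\}$, I may pass to a subsequence along which $b^{(n)}_\pi$, hence $\theta_h$, does not depend on $n$. (Observe that $\theta_h\neq0$ already forces $h\notin H(\pi)$, by Proposition~\ref{nnn}.)

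Next I would put $a_n:=a_1^{(k_n)}$ and check that $a_n\to+\infty$: one has $a_n\geq(\min_i h_i)\,q_1^{(k_n)}$, and $q_1^{(k_n)}=\sum_{j=0}^{K_1-1}h^{(k_n)}_{w^1_j}\to+\infty$ since the induction intervals shrink to a point while $h^{(k_n)}_j\geq h^{(k_n)}_l/\nu(B)$ for all $j,l$ by $(\ref{1})$ (cf.\ the proofs of Lemmas~\ref{miara} and~\ref{12}). By Theorem~\ref{zb} with $r=1$, $(T^{f_h})_{a_n}\to\alpha_1 Id+(1-\alpha_1)\Phi_1$ weakly. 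On the other hand $a_n=a_2^{(k_n)}+\theta_h$, so, composing with the fixed unitary $(T^{f_h})_{\theta_h}$, $(T^{f_h})_{a_n}=(T^{f_h})_{\theta_h}(T^{f_h})_{a_2^{(k_n)}}\to\alpha_2(T^{f_h})_{\theta_h}+(1-\alpha_2)\Phi_2'$ weakly, where $\Phi_2':=(T^{f_h})_{\theta_h}\Phi_2$ is a composition of intertwining Markov operators, hence lies in $\mathcal{J}(T^{f_h})$. Thus the single sequence $\{a_n\}$ fulfils both hypotheses of Lemma~\ref{alfy} with $\theta=\theta_h\neq0$, and the lemma provides $\alpha_1+\alpha_2\leq1$ and $\Phi\in\mathcal{J}(T^{f_h})$ with $(T^{f_h})_{a_n}\to\alpha_1 Id+\alpha_2(T^{f_h})_{\theta_h}+(1-\alpha_1-\alpha_2)\Phi$ weakly; this is the first assertion.

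For the disjointness statement I would set $\alpha=\alpha_1+\alpha_2\in(0,1]$, $J=\Phi$, and let $P=\frac{\alpha_1}{\alpha}\delta_0+\frac{\alpha_2}{\alpha}\delta_{\theta_h}$, a probability Borel measure on $\R$. The limit just obtained equals $\alpha\int_{\R}(T^{f_h})_s\,dP(s)+(1-\alpha)J$; since $\alpha_1,\alpha_2>0$ and $\theta_h\neq0$, $P$ is not a Dirac measure; and $T^{f_h}$ is ergodic. Hence Proposition~\ref{proprozlflow} applies and shows that $T^{f_h}$ is disjoint from all weakly mixing ELF flows.

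I expect the main obstacle to be securing the exact equality $a_1^{(k_n)}-a_2^{(k_n)}=\theta_h$ with a single fixed nonzero value: this is exactly what lets one use a single sequence of times in Lemma~\ref{alfy} and thereby produce the genuine flow component $(T^{f_h})_{\theta_h}$ in the weak limit. It combines the cocycle identity behind Theorem~\ref{zb} (Lemma~\ref{stalykocykl}), the relation $l(\bar{w}_1)-l(\bar{w}_2)=b_\tau$ carried through the tower construction via Lemma~\ref{ww}, and the finiteness of $\Sigma(\pi)$. The only other place needing a short verification is $a_n\to+\infty$, which follows from the unbounded growth of the Rauzy first-return times.
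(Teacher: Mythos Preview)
Your argument is correct and follows the same route as the paper: compute $a_1^{(k_n)}-a_2^{(k_n)}=h\cdot A^{(k_n+k)}b_\tau=h\cdot b_\pi^{(n)}=\theta_h$, set $a_n=a_1^{(k_n)}$, combine the two limits from Theorem~\ref{zb} via Lemma~\ref{alfy}, and finish with Proposition~\ref{proprozlflow}. You are in fact more careful than the paper in two places---passing to a subsequence so that $b_\pi^{(n)}$ (hence $\theta_h$) is genuinely constant in $n$, and verifying $a_n\to+\infty$---and your remark that $\theta_h\neq0$ already forces $h\notin H(\pi)$ via Proposition~\ref{nnn} is a correct observation that the paper leaves implicit.
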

\begin{proof}  By the definitions of $b_\pi^{(n)}$, $\theta_h$ and
(\ref{zmroznica}),
\begin{eqnarray*}
a_1^{(k_n)}-a_2^{(k_n)}&=&h\cdot A^{(k_n+k)}\cdot
l(\bar{w}_1)-h\cdot A^{(k_n+k)}\cdot l(\bar{w}_2)\\& =&h\cdot
A^{(k_n+k)}\cdot(l(\bar{w}_1)-l(\bar{w}_2))\\
&=&h\cdot A^{(k_n+k)}\cdot b_{\tau}=h\cdot
b^{(n)}_{\pi}=\theta_h\neq 0.
\end{eqnarray*} Set $a_n:=a_1^{(k_n)}$.  By Theorem \ref{zb}, it
follows that
$$(T^{f_h})_{a_n}\rightarrow\alpha_1 Id+(1-\alpha_1)\Phi_1$$
and
$$(T^{f_h})_{a_n}\rightarrow\alpha_2(T^{f_h})_{\theta_h}+(1-\alpha_2)(T^{f_h})_{\theta_h}
\circ\Phi_2.$$ Since $T^{f_h}$ is ergodic, by Lemma~\ref{alfy},
$\alpha_1+\alpha_2\leq 1$ and there exists
$\Phi\in\mathcal{J}(T^{f_h})$ for which
$$(T^{f_h})_{a_n}\rightarrow\alpha_1Id+\alpha_2(T^{f_h})_{\theta_h}+(1-\alpha_1-\alpha_2)\Phi.$$
Therefore
\[(T^{f_h})_{a_n}\rightarrow(\alpha_1+\alpha_2)\int_\R(T^f)_{t}\,dP(t)+(1-\alpha_1-\alpha_2)\Phi,
\] where
$P=\frac{\alpha_1}{\alpha_1+\alpha_2}\delta_0+\frac{\alpha_2}{\alpha_1+\alpha_2}\delta_{\theta_h}$.
Now it suffices to apply Proposition~\ref{proprozlflow} to
complete the proof.
\end{proof}

\begin{corollary}\label{corflow}
If $\pi\in S^0_m$ and $\#\Sigma(\pi)\neq 1$ then for almost every
$\lambda\in\R^m_+$ and $h\in\R^m_+$ such that $h\cdot b(S)\neq0$
for all $S\in\Sigma(\pi)$ the special flow $T^{f_h}_{\lambda,\pi}$
is disjoint from weakly mixing ELF flows. In particular, for a.e.
$\lambda\in\R^m_+$ and a.e. $h\in\R^m_+$ the special flow
$T^{f_h}_{\lambda,\pi}$ is disjoint from weakly mixing ELF
flows.\bez
\end{corollary}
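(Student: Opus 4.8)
The plan is to deduce the corollary from Theorem \ref{vvv}, whose hypotheses are: $T_{\lambda,\pi}$ ergodic, $T_{\lambda,\pi}$ of a form to which Theorem \ref{zb} applies (i.e.\ it has the two recurrence words giving $b(S)$ as in \eqref{recwords}, after possibly passing through the Rauzy machinery), $h\notin H(\pi)$, and $\theta_h = h\cdot b_\pi^{(n)}\neq 0$. So the argument has two layers: first reduce the statement to a single ``good'' permutation in each Rauzy class via Lemma \ref{przeniesienie}, then interpret the remaining conditions on $(\lambda,h)$ as full-measure conditions.

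First I would fix $\pi\in S_m^0$ with $\#\Sigma(\pi)\neq 1$ and argue that its Rauzy class $\mathcal{R}(\pi)$ contains \emph{some} permutation $\tau$ together with a pair $(\lambda_0,\tau)$ such that $T_{\lambda_0,\tau}$ satisfies the IDOC and possesses recurrence words $\bar w_1,\bar w_2$ with $l(\bar w_1)-l(\bar w_2)=b(S)$ for a suitable $S\in\Sigma(\tau)$. The natural candidate for $\tau$ is an IET of periodic type inside $\mathcal{R}(\pi)$: for such an IET the associated substitution system (Remark \ref{isosub}) has population vectors of recurrence words controlled by the period matrix, and for substitutions for which $1$ is an eigenvalue of the associated matrix the techniques of \cite{Br-Fr} produce two recurrence words whose population vectors differ by a prescribed integer vector — here the vector $b(S)$, which is a fixed combinatorial datum of $\tau$. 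The condition $\#\Sigma(\pi)\neq 1$ is what guarantees $b(S)\neq 0$ for some $S$, so that there is a genuine difference to realize; existence of a periodic-type IET in the class with the needed spectral property is the technical crux. Granting this, Lemma \ref{przeniesienie} immediately upgrades the conclusion: since $\pi\in\mathcal{R}(\tau)$, for a.e.\ $\rho\in\Lambda_m$ the IET $T_{\rho,\pi}$ has recurrence words $\bar w_1',\bar w_2'$ with $l(\bar w_1')-l(\bar w_2')=b(S')$, $S'\in\Sigma(\pi)$, which is exactly the input needed to run the construction of Section \ref{disjointness} and hence Theorem \ref{zb}.

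Next I would handle the two remaining hypotheses of Theorem \ref{vvv} as generic conditions. Ergodicity of $T_{\lambda,\pi}$ (in fact unique ergodicity) holds for a.e.\ $\lambda$ by Theorem \ref{tw_veech_o_mierze}. For the condition $h\notin H(\pi)$: since $H(\pi)=L^\pi(\R^m)$ is a proper linear subspace (as $\#\Sigma(\pi)\neq 1$ forces $\ker L^\pi\neq 0$, equivalently $H(\pi)\neq\R^m$), its complement in $\R^m_+$ has full Lebesgue measure, and by Proposition \ref{nnn}, $h\notin H(\pi)$ means $h\cdot b(S)\neq 0$ for some $S\in\Sigma(\pi)$. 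The condition $\theta_h=h\cdot b_\pi^{(n)}\neq 0$: recall $b_\pi^{(n)}=A^{(k_n+k)}b_\tau=b(S')$ for the cyclic set $S'\in\Sigma(\pi)$ produced above (via Lemma \ref{ww}), so $\theta_h\neq 0$ is again a condition of the form $h\cdot b(S')\neq 0$, hence holds off a hyperplane. Intersecting finitely many (one per $S\in\Sigma(\pi)$, for the stronger ``in particular'' clause) complements of hyperplanes with $\R^m_+$ still leaves full measure. Thus for a.e.\ $(\lambda,h)$ all hypotheses of Theorem \ref{vvv} are met, and we conclude that $T^{f_h}_{\lambda,\pi}$ is disjoint from all weakly mixing ELF flows.

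Finally, the ``in particular'' sentence follows by noting that $\{h\in\R^m_+: h\cdot b(S)=0\text{ for some }S\in\Sigma(\pi)\}$ is a finite union of proper affine subspaces (each $b(S)\neq 0$ is ruled out only when $S$ has $|b(S)|=0$, but even then $b(S)$ itself need not vanish; one checks directly that no $b(S)$ is the zero vector, using that the cyclic sets partition $\{0,\dots,m\}$ nontrivially when $\#\Sigma(\pi)\neq 1$), so its complement in $\R^m_+$ has full measure. I expect the main obstacle to be the first layer — producing, in an arbitrary Rauzy class with $\#\Sigma(\pi)\neq 1$, a periodic-type IET whose substitution has $1$ as an eigenvalue of the associated matrix and then extracting from \cite{Br-Fr} the two recurrence words realizing the difference $b(S)$; the genericity arguments in the second layer are routine once that base case is in hand.
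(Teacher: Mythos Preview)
Your layer-2 argument is correct and is exactly what the paper has in mind: under the standing hypothesis of Subsection~\ref{disjointness}, Remark~\ref{full} gives a full-measure set $\mathcal{A}\subset\Lambda_m\times\mathcal{R}(\tau)$ on which Theorem~\ref{vvv} applies; the conditions $h\cdot b(S)\neq 0$ for all $S\in\Sigma(\pi)$ force both $h\notin H(\pi)$ (Proposition~\ref{nnn}) and $\theta_h=h\cdot b(S')\neq 0$ for whichever $S'$ arises via Lemma~\ref{ww}; and $\#\Sigma(\pi)\neq 1$ guarantees each $b(S)\neq 0$, so the bad $h$ form a finite union of proper hyperplanes. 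This is why the paper marks the corollary with \bez\ and gives no proof.

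The discrepancy is in layer 1. The corollary sits inside Subsection~\ref{disjointness}, whose opening paragraph fixes $\tau\in S_m^0$ and \emph{assumes} the existence of an IDOC IET $T_{\lambda,\tau}$ with recurrence words $\bar w_1,\bar w_2$ satisfying \eqref{recwords}. The corollary is to be read under that standing hypothesis, i.e.\ for $\pi\in\mathcal{R}(\tau)$; this is corroborated by the Introduction, which advertises Corollary~\ref{corflow} as holding ``for some irreducible permutations''. The paper never claims, and does not prove, that every Rauzy class with $\#\Sigma(\pi)\neq 1$ contains such a $\tau$ --- it establishes this only for $\mathcal{R}(\tau_m^{sym})$, $m\geq 5$ odd, in Sections~\ref{secprzyklad} and the reduction section. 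So your layer-1 step is not a gap in your argument but an attempt to prove strictly more than the paper asserts. You correctly flag it as the real obstacle; producing a periodic-type IET with $1$ as an eigenvalue of the period matrix and then realizing $b(S)$ as a difference of population vectors of recurrence words is precisely what the paper carries out by hand (with Maple) for $m=5$ and then propagates inductively, not something it obtains for arbitrary Rauzy classes.
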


\begin{theorem}\label{roz_z_5}
If
$(\rho,\pi)\in\mathcal{A}\cap(\Lambda_m\times\widetilde{S}^0_m)$
then the IET $T=T_{\rho,\pi}:[0,1)\rightarrow[0,1)$ is weakly
mixing and disjoint from all ELF automorphisms.
\end{theorem}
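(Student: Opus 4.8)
The plan is to funnel the combinatorial data already built in Subsection~\ref{disjointness} into Proposition~\ref{proprozaut}. For $(\rho,\pi)\in\mathcal{A}$ we are handed the sequence $\{k_n\}$ with $P^{k_n}(\rho,\pi)\in\mathcal{M}$, the sets $C^{(k_n)}_1,C^{(k_n)}_2$, the return times $q_r^{(k_n)}=|A^{(k_n+k)}(\rho,\pi)\,l(\bar{w}_r)|=(1,\dots,1)\cdot A^{(k_n+k)}l(\bar{w}_r)$, Lemmas~\ref{miara} and~\ref{12}, and the identity~\eqref{zmroznica}, which reads $l(\bar{w}'_1)-l(\bar{w}'_2)=A^{(k_n+k)}b_\tau=b^{(n)}_\pi=b(S')$ for some $S'\in\Sigma(\pi)$. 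The genuinely new ingredient is $\pi\in\widetilde{S}^0_m$: by Remark~\ref{jjj} this forces $|b(S')|=\pm1$, whence, taking the coordinate sum in~\eqref{zmroznica},
\[
\theta:=q_1^{(k_n)}-q_2^{(k_n)}=(1,\dots,1)\cdot A^{(k_n+k)}(l(\bar{w}_1)-l(\bar{w}_2))=|b^{(n)}_\pi|=|b(S')|\in\{1,-1\},
\]
a nonzero integer \emph{independent of $n$}; the whole proof turns on wringing a non-Dirac limit law along powers of $T$ out of this single unit jump. First I would record the standing facts about $T=T_{\rho,\pi}$: being in $\mathcal{A}$ it satisfies the IDOC, hence is minimal, so $\mu(\{x:T^{j}x=x\})=0$ for every $j\neq0$; and since $A^{(k_n+k)}(\rho,\pi)=A^{(k_n)}(\rho,\pi)B$ with $B$ of strictly positive entries, \eqref{1} gives $\nu(A^{(k_n+k)}(\rho,\pi))\le\nu(B)<\infty$ for all $n$, so $T$ is (uniquely) ergodic by the criterion of Masur and Veech (\cite{Ma},\cite{Ve3}).

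Next I would pass to a subsequence along which $\{q_1^{(k_n)}\}$ is strictly increasing (it tends to $+\infty$) and $T^{q_1^{(k_n)}}$ converges in the weak operator topology; then $T^{q_2^{(k_n)}}=T^{-\theta}T^{q_1^{(k_n)}}$ converges too. Now I run the analogue of Theorem~\ref{zb} for the automorphism $T$ in place of a special flow. Apply Proposition~\ref{twozbdoc} to the suspension flow $T^{1}$ over $T$ (i.e.\ $f\equiv1$), with the sets $C^{(k_n)}_r$, the integers $q_r^{(k_n)}$ and $a_n:=q_r^{(k_n)}$: Lemmas~\ref{miara} and~\ref{12} supply the hypotheses, $f^{(q_r^{(k_n)})}\equiv q_r^{(k_n)}=a_n$ so $\int_{C^{(k_n)}_r}|f^{(q_r^{(k_n)})}-a_n|^2\,d\mu=0$ and the push-forward measures are $\delta_0$, whence $(T^{1})_{q_r^{(k_n)}}\to\alpha_r\,Id+(1-\alpha_r)J_r$ weakly, with $\alpha_r\ge\alpha>0$ and $J_r\in\mathcal{J}(T^{1})$. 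Restricting this convergence to the invariant subspace $L^2(X)\otimes\{1\}\cong L^2(X)$, on which $(T^{1})_{q_r^{(k_n)}}$ acts as $T^{q_r^{(k_n)}}$, produces $\Phi_r\in\mathcal{J}(T)$ with $T^{q_r^{(k_n)}}\to\alpha_r\,Id+(1-\alpha_r)\Phi_r$ weakly for $r=1,2$. Using $q_1^{(k_n)}=q_2^{(k_n)}+\theta$, the case $r=2$ gives $T^{q_1^{(k_n)}}=T^{\theta}\circ T^{q_2^{(k_n)}}\to\alpha_2 T^{\theta}+(1-\alpha_2)T^{\theta}\Phi_2$. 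Since $\mu(\{x:T^{\theta}x=x\})=0$, the automorphism counterpart of Lemma~\ref{alfy} (proved exactly as Lemma~\ref{alfy}, via the automorphism version of Lemma~\ref{dojoi}) yields $\alpha_1+\alpha_2\le1$ and $\Phi\in\mathcal{J}(T)$ with
\[
T^{q_1^{(k_n)}}\to\alpha_1\,Id+\alpha_2 T^{\theta}+(1-\alpha_1-\alpha_2)\Phi=(\alpha_1+\alpha_2)\sum_{j\in\Z}P(\{j\})T^{j}+(1-\alpha_1-\alpha_2)\Phi,
\]
where $P=\tfrac{\alpha_1}{\alpha_1+\alpha_2}\delta_0+\tfrac{\alpha_2}{\alpha_1+\alpha_2}\delta_{\theta}$ is a probability measure on $\Z$ that is \emph{not} Dirac, both atoms carrying positive mass and $\theta\neq0$.

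To invoke Proposition~\ref{proprozaut} I still need $T$ weakly mixing, and this is now forced by the convergence just obtained. If $e^{2\pi i\gamma}$ were an eigenvalue of $T$, say $g\circ T=e^{2\pi i\gamma}g$ with $\|g\|=1$, then $\langle T^{q_1^{(k_n)}}g,g\rangle=e^{2\pi i\gamma q_1^{(k_n)}}$ has modulus $1$ for every $n$ and converges to $\alpha_1+\alpha_2 e^{2\pi i\gamma\theta}+(1-\alpha_1-\alpha_2)\langle\Phi g,g\rangle$; a limit of unimodular numbers is unimodular, so this convex combination of $1$, $e^{2\pi i\gamma\theta}$ and a point of the closed unit disc has modulus $1$, and since the first two weights $\alpha_1,\alpha_2$ are strictly positive, strict convexity of the disc forces $e^{2\pi i\gamma\theta}=1$. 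As $\theta=\pm1$ this gives $e^{2\pi i\gamma}=1$, so $g$ is $T$-invariant, hence constant by ergodicity; thus $T$ has no nontrivial eigenvalue and is weakly mixing. Finally, Proposition~\ref{proprozaut} applied with $t_n=q_1^{(k_n)}$, $\alpha=\alpha_1+\alpha_2\in(0,1]$, the non-Dirac measure $P$ and $J=\Phi$ shows that the weakly mixing ergodic automorphism $T=T_{\rho,\pi}$ is disjoint from all ELF automorphisms.

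The only point that needs real care — and the place where the special-flow formalism of the paper has to be adapted — is the reduction of Proposition~\ref{twozbdoc} (stated for special flows) to weak convergence of the powers $T^{q_r^{(k_n)}}$ on $L^2(X)$, together with the verification that the restriction of $J_r$ to $L^2(X)\otimes\{1\}$ really is a Markov operator intertwining $T$; this is bookkeeping of the same nature as what underlies Lemmas~\ref{dojoi} and~\ref{alfy} (all of which ultimately rest on Lemma~3 of \cite{Fr-Le2}). Everything afterwards — combining the two convergences and the elementary convex-combination argument for weak mixing — carries no serious obstacle.
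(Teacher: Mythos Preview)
Your argument is correct and follows essentially the same route as the paper: set $h=(1,\dots,1)$ so that $f_h\equiv1$, feed Lemmas~\ref{miara} and~\ref{12} into Proposition~\ref{twozbdoc} for the suspension flow, restrict the resulting weak limit from $L^2(X\times[0,1))$ to the invariant copy of $L^2(X)$ (checking the restriction lies in $\mathcal{J}(T)$), combine the two convergences via the $\theta=\pm1$ identity, run the convexity eigenvalue argument for weak mixing, and finish with Proposition~\ref{proprozaut}. The only cosmetic difference is that the paper first combines the two flow limits (via Theorem~\ref{vvv} and Lemma~\ref{alfy}) and then restricts, whereas you restrict each and then combine at the automorphism level; your explicit verification of ergodicity of $T$ via the uniform bound $\nu(A^{(k_n+k)})\le\nu(B)$ is a welcome addition, since Proposition~\ref{twozbdoc} requires it.
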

\begin{proof}  Let
$h=(1,\ldots,1)$, hence $f_h\equiv 1$. Since
$\pi\in\widetilde{S}_m^0$, it follows that
$\theta_1=(1,\ldots,1)\cdot b(S)=|b(S)|=\pm1$ and hence $h\notin
H(\pi)$. Without loss of generality we can assume that
$\theta_1=1$. By Theorem \ref{vvv}, there exists a sequence
$\{a_n\}$ of natural numbers ($a_n=h\cdot A^{(k_n+k)}\cdot
l(\bar{w}_1)\in\N$), positive numbers $\alpha_1,\alpha_2$ with
$\alpha_1+\alpha_2\leq1$, and $\Phi\in\mathcal{J}(T^{f_1})$ such
that
$$(T^{f_1}_1)^{a_n}\rightarrow\alpha_1 Id_{[0,1)\times[0,1)}+\alpha_2T^{f_1}_1+(1-\alpha_1-\alpha_2)\Phi.$$
Since $T^{f_1}_1=(T^1)_1= T\otimes Id$ ($Id:=Id_{[0,1)}$), we have
\begin{equation}\label{jeden}
(T^{a_n}\otimes Id)\rightarrow\alpha_1(Id\otimes
Id)+\alpha_2(T\otimes Id)+(1-\alpha_1-\alpha_2)\Phi,
\end{equation}
and hence
\begin{equation}\label{pierd}
(T^{a_n}-\alpha_1 Id-\alpha_2T)\otimes
Id\rightarrow(1-\alpha_1-\alpha_2)\Phi.
\end{equation}

Suppose that $\alpha_1+\alpha_2<1$. If $\alpha_1+\alpha_2=1$ then
the last member of (\ref{jeden}) vanishes and the proof becomes
even easier.

Consider the subspace
$$H=L^2([0,1)\times[0,1),\mathcal{B}_{[0,1)}\otimes\{\emptyset,[0,1)\},\mu\otimes\mu)\subset
L^2([0,1)\times[0,1),\mathcal{B}_{[0,1)}\otimes\mathcal{B}_{[0,1)},\mu\otimes\mu).$$
The space $H$ can be identified with
$L^2([0,1),\mathcal{B}_{[0,1)},\mu)$ via the map
\[L^2([0,1),\mathcal{B}_{[0,1)},\mu)\ni f\mapsto\overline{f}\in H,\;\;\overline{f}(x,y)=f(x).\]
Since $H$ is closed and $(T^{a_n}-\alpha_1 Id-\alpha_2T)\otimes
Id$-invariant, by (\ref{pierd}), it follows that $H$ is also
$\Phi$-invariant. Denote by
$\Phi':L^2([0,1),\mathcal{B},\mu)\rightarrow
L^2([0,1),\mathcal{B},\mu)$ the restriction of $\Phi$ to $H$, more
precisely, $\Phi'$ is determines by
$\overline{\Phi'(f)}=\Phi(\overline{f})$ for $f\in
L^2([0,1),\mathcal{B},\mu)$. In view of (\ref{jeden}), it follows
that
\begin{equation}\label{dowielo}
T^{a_n}\rightarrow\alpha_1
Id+\alpha_2T+(1-\alpha_1-\alpha_2)\Phi'.
\end{equation}
 Since $\Phi$ is a Markov  operator,
$\overline{\Phi'(f)}=\Phi(\overline{f})\geq 0$ for every $f\geq
0$. Moreover,
$\overline{\Phi'(1)}=\Phi(\overline{1})=\overline{1}$ and
\[\int_{[0,1)}\Phi'(f)\,d\mu=\int_{[0,1)\times[0,1)}\Phi(\overline{f})\,d\mu\otimes\mu=\int_{[0,1)\times[0,1)}\overline{f}\,d\mu\otimes\mu=\int_{[0,1)}f\,d\mu,\]
and hence $\Phi'$ is a Markov operator. As
$\Phi\in\mathcal{J}(T^{f_1})$, the operators $\Phi$ and $T\otimes
Id$ commute. It follows that
\[\overline{\Phi'\circ T(f)}=\Phi\circ (T\otimes Id)(\overline{f})=(T\otimes Id)\circ\Phi (\overline{f})=\overline{T\circ \Phi'(f)}.\]
Therefore $\Phi'\in \mathcal{J}(T)$.

Finally we will show that $T$ is weakly mixing. Suppose that
$fT=e^{ia}f$ for some $f\in L^2([0,1))$ with $\|f\|=1$ and
$a\in\R$. From (\ref{dowielo}),
\begin{eqnarray*}1&=&\|f\|^2=|\langle
f,fT^{a_n}\rangle|=|\alpha_1\langle f,f\rangle+\alpha_2\langle
f,fT\rangle+ (1-\alpha_1-\alpha_2)\langle f,\Phi'f\rangle|\\
&\leq&|\alpha_1+\alpha_2e^{ia}|+(1-\alpha_1-\alpha_2)|\langle
f,\Phi'f\rangle|\leq|\alpha_1+\alpha_2e^{ia}|+(1-\alpha_1-\alpha_2).
\end{eqnarray*}
It follows that
\[1\leq\left|\frac{\alpha_1}{\alpha_1+\alpha_2}+\frac{\alpha_2}{\alpha_1+\alpha_2}e^{ia}\right|,\]
thus $e^{ia}=1$, which proves the weak mixing of $T$. Now we can
apply  Proposition~\ref{proprozaut} to complete the proof.
\end{proof}

In view of Remark~\ref{full}  we obtain the following conclusion.

\begin{theorem}\label{fuldis}
Suppose that $(\lambda,\tau)\in\Lambda_m\times\widetilde{S}_m^0$
is a pair such that the IET $T_{\lambda,\tau}$  satisfies the IDOC
and $T_{\lambda,\tau}$ has recurrence words
$\bar{w}_1,\;\bar{w}_2\in\mathcal{L}(T_{\lambda,\tau})$ such that
$l(\bar{w}_1)-l(\bar{w}_2)=b(S)$ for some $S\in\Sigma(\pi)$. If
$\pi\in\mathcal{R}(\tau)\cap\widetilde{S}_m^0$ then for almost
every $\rho\in\Lambda_m$ the IET $T_{\rho,\pi}$ is disjoint from
ELF automorphisms.\bez
\end{theorem}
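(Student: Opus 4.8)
The plan is to read Theorem~\ref{fuldis} off Theorem~\ref{roz_z_5} combined with Remark~\ref{full}; the only work beyond quoting those two results is a Fubini-type slicing argument that converts the $\kappa$-almost-everywhere statement of Remark~\ref{full} into one that holds for Lebesgue-almost every $\rho\in\Lambda_m$ on the single slice $\Lambda_m\times\{\pi\}$.

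First I would note that the hypotheses of Theorem~\ref{fuldis} are exactly the standing assumptions of Subsection~\ref{disjointness}: we are handed $\tau\in\widetilde{S}_m^0$, an IET $T_{\lambda,\tau}$ satisfying the IDOC with recurrence words $\bar w_1,\bar w_2\in\mathcal{L}(T_{\lambda,\tau})$ for which $l(\bar w_1)-l(\bar w_2)=b(S)$ with $S\in\Sigma(\tau)$, together with a target permutation $\pi\in\mathcal{R}(\tau)$. So the whole construction of that subsection applies verbatim and produces the open set $\mathcal{M}\subset\Lambda_m\times\mathcal{R}(\tau)$ and then the set $\mathcal{A}$ of all $(\rho,\pi')\in\Lambda_m\times\mathcal{R}(\tau)$ such that $T_{\rho,\pi'}$ satisfies the IDOC and $P^{k_n}(\rho,\pi')\in\mathcal{M}$ for some increasing sequence $\{k_n\}$.

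Next I would invoke Remark~\ref{full}, according to which the ergodicity and conservativity of $P$ on $(\Lambda_m\times\mathcal{R}(\tau),\kappa)$ (Theorem~\ref{tw_veech_o_mierze_na_klasie}) force $(\Lambda_m\times\mathcal{R}(\tau))\setminus\mathcal{A}$ to be $\kappa$-null. Since the Rauzy class $\mathcal{R}(\tau)$ is finite, $\Lambda_m\times\mathcal{R}(\tau)$ is the disjoint union of the slices $\Lambda_m\times\{\pi'\}$, $\pi'\in\mathcal{R}(\tau)$, and $\kappa$ is equivalent to Lebesgue measure on this union; hence the restriction of $\kappa$ to $\Lambda_m\times\{\pi\}$ is equivalent to Lebesgue measure on $\Lambda_m$. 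Consequently the $\kappa$-null set above meets the slice $\Lambda_m\times\{\pi\}$ in a Lebesgue-null set, i.e.\ $(\rho,\pi)\in\mathcal{A}$ for Lebesgue-almost every $\rho\in\Lambda_m$. To cover the ergodicity hypothesis used inside the proof of Theorem~\ref{vvv}, I would also discard, using Theorem~\ref{tw_veech_o_mierze}, the Lebesgue-null set of $\rho$ for which $T_{\rho,\pi}$ fails to be (uniquely) ergodic.

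Finally, for each of the remaining (full-measure-many) $\rho$ we have $(\rho,\pi)\in\mathcal{A}$ and, since $\pi\in\widetilde{S}_m^0$ by assumption, $(\rho,\pi)\in\mathcal{A}\cap(\Lambda_m\times\widetilde{S}_m^0)$; Theorem~\ref{roz_z_5} then gives that $T_{\rho,\pi}$ is weakly mixing and disjoint from all ELF automorphisms, which is the assertion. I do not expect any genuine obstacle here: the analytic content has already been done in Theorems~\ref{zb}, \ref{vvv} and \ref{roz_z_5}; the only step demanding (minimal) attention is the passage from a $\kappa$-conull set to a Lebesgue-conull subset of the fixed slice $\Lambda_m\times\{\pi\}$, and that is immediate once one uses the finiteness of $\mathcal{R}(\tau)$ and the equivalence of $\kappa$ with Lebesgue measure.
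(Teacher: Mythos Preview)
Your proposal is correct and follows exactly the paper's approach: the paper simply states ``In view of Remark~\ref{full} we obtain the following conclusion'' and marks the theorem with \bez, leaving implicit the very slicing step (from the $\kappa$-conull set $\mathcal{A}$ to a Lebesgue-conull subset of $\Lambda_m\times\{\pi\}$) that you spell out. Your extra remark about discarding the non--uniquely-ergodic $\rho$ via Theorem~\ref{tw_veech_o_mierze} is a harmless precaution, not a deviation.
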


\section{An example of $5$-IET}\label{secprzyklad}
\label{example} In this subsection we give an explicit example of
$5$-IET of periodic type which fulfills the hypothesis of
Theorem~\ref{fuldis}. This example gives also an example of IET
which is disjoint from ELF automorphisms. To find IETs of periodic
type  we  use a method introduced in \cite{Si-Ul}. This method is
based on searching for some closed paths in Rauzy graphs.

Let $c=c_1\ldots c_n$ be a word over the alphabet $\{a,b\}$.
Denote by  $c(\tau)$ the path of length $n$ in the Rauzy class
$\mathcal{R}(\tau)$ starting from the permutation $\tau$ then we
apply consecutively operations $c_1,\ldots, c_n$. Suppose that
$c(\tau)$ is a closed path, i.e.\ $\tau=c_n\circ\ldots\circ
c_1(\tau)$, and
$$A(c(\tau))=A(c_1,\tau)A(c_2,c_1(\tau))\ldots
A(c_n,c_{n-1}\circ\ldots\circ c_1(\tau))$$ is a primitive matrix.
A method of searching for such paths was described in
\cite{Ma-Mo-Yo}. Let $\theta>1$ stand for the Perron-Frobenius
eigenvalue of $A(c(\tau))$ and let $\lambda\in\R^m_+$ be a right
Perron-Frobenius eigenvector. Since
\[\theta\lambda=A(c_1,\tau)A(c_2,c_1(\tau))\ldots
A(c_n,c_{n-1}\circ\ldots\circ c_1(\tau))\lambda,\] we conclude
that $c(J^{k-1}(\lambda,\tau))=c_k$ and
\[J^{k}(\lambda,\tau)=(A(c_k,c_{k-1}\circ\ldots\circ c_1(\tau))^{-1}\ldots A(c_2,c_1(\tau))^{-1}A(c_1,\tau)^{-1}\lambda,c_k\circ\ldots\circ
c_1(\tau))\] for $1\leq k\leq n$. It follows that
$J^{n}(\lambda,\tau)=(\theta^{-1}\lambda,\tau)$, and hence
$P^{n}(\lambda,\tau)=(\lambda,\tau)$, which shows that
$T_{\lambda,\tau}$ has periodic type.

 Let us consider the
permutation $\tau_5^{sym}$. In this case
$$\eta_{\tau_5^{sym}}=\begin{pmatrix}0&1&2&3&4&5\\4&5&0&1&2&3\end{pmatrix}$$
and $\eta_{\tau_5^{sym}}$ has two cyclic sets $S_0=\{0,2,4\}$ and
$S_1=\{1,2,5\}$ for which  $b(S_0)=(1,-1,1,-1,1)$ and
$b(S_1)=(-1,1,-1,1,-1)$. Therefore
$\tau_5^{sym}\in\widetilde{S}_5^0$.

\begin{figure}[htp]
\begin{center}
\includegraphics[width=10cm]{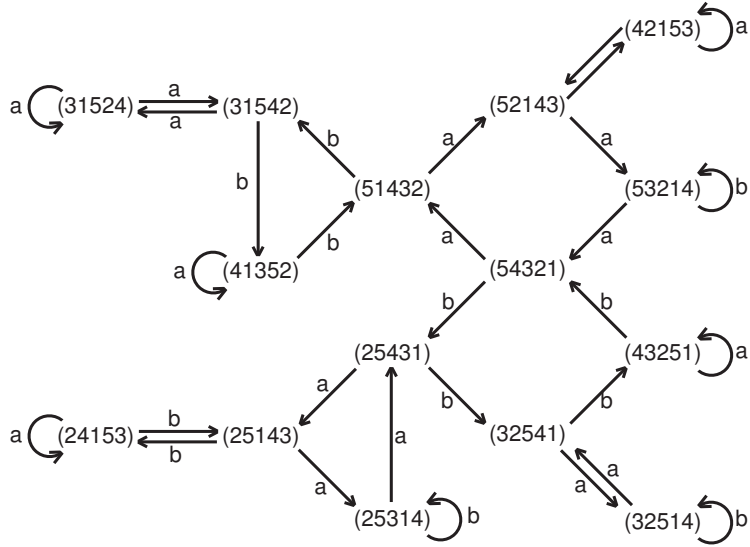}
\end{center}
\caption{Rauzy class $\mathcal{R}(\tau_5^{sym})$}\label{rysunek}
\end{figure}

Next consider the closed path $c(\tau_5^{sym})$, where $c=
bbaababaaaba$, in the Rauzy class $\mathcal{R}(\tau_5^{sym})$ (see
Figure~\ref{rysunek}). Here
$$A=A(c(\tau_5^{sym}))=\begin{pmatrix}
  1 & 1 & 1 & 1 & 1 \\
  1 & 2 & 0 & 0 & 0 \\
  0 & 0 & 2 & 3 & 2 \\
  0 & 0 & 0 & 2 & 1 \\
  2 & 3 & 2 & 2 & 2
\end{pmatrix},\;\;B=A^2=\begin{pmatrix}
  4 & 6 & 5 & 8 & 6 \\
  3 & 5 & 1 & 1 & 1 \\
  4 & 6 & 8 & 16 & 11 \\
  2 & 3 & 2 & 6 & 4 \\
  9 & 14 & 10 & 16 & 12
\end{pmatrix},$$
$\theta=2+\frac{1}{2}\sqrt{3}+\frac{1}{2}\sqrt{15+8\sqrt{3}}\approx
5.55$ is the Perron-Frobenius  eigenvalue of $A$ and
\begin{eqnarray*}
\lambda&=&\left(\sqrt{3},\frac{3}{2}-\sqrt{3}+\sqrt{15+8\sqrt{3}}-\frac{1}{2}\sqrt{3}\sqrt{15+8\sqrt{3}},\right.\\
&&\left.-1+\frac{3}{2}\sqrt{3}-\frac{3}{2}\sqrt{15+8\sqrt{3}}+\sqrt{3}\sqrt{15+8\sqrt{3}},1,\frac{1}{2}\sqrt{3}+\frac{1}{2}\sqrt{15+8\sqrt{3}}\right)
\end{eqnarray*}
is a right Perron-Frobenius eigenvector. Therefore the IET
$T:=T_{\lambda,\tau_5^{sym}}:[0,|\lambda|)\rightarrow[0,|\lambda|)$
has  periodic type, hence is minimal. Moreover,
$T_{\lambda,\tau_5^{sym}}$ is weakly mixing. This is a consequence
of Corollary 1 in \cite{Si-Ul} because the characteristic
polynomial of $A$ is equal to $p(x)=(x-1)(x^4-8x^3+15x^2-8x+1)$
and the Galois groups of $p$ has five elements.

In view of Remark~\ref{isosub}, every IET of periodic type is
isomorphic to a substitution dynamical system
$S_\sigma:X_\sigma\to X_\sigma$. For the IET
$T_{\lambda,\tau_5^{sym}}$ the substitution $\sigma$ is defined
over the alphabet $\{1,\ldots,5\}$ and
$$\sigma(1)=1525,\ \sigma(2)=152525,\ \sigma(3)=15335,\ \sigma(4)=15343435,\ \sigma(5)=153435.$$
Since every word $\sigma(i)$ for $1\leq i\leq 5$ starts with the
symbol $1$, if $\bar{w}\in\mathcal{L}_\sigma$ then
$\sigma(\bar{w})\in\mathcal{L}_\sigma$ is a recurrence word for
$\sigma$. As
$\mathcal{L}_\sigma=\mathcal{L}(T_{\lambda,\tau^{sym}_5})$ it
follows that $\sigma(\bar{w})$ is a  recurrence word for the IET
$T$.

Let $u\in\{1,\ldots,5\}^\N$ stand for the unique fixed sequence
for
$\sigma:\{1,\ldots,5\}^{\mathbb{N}}\rightarrow\{1,\ldots,5\}^{\mathbb{N}}$
starting from the symbol $1$, i.e.
$$u=15\textbf{251534}35152525153435152515343\textbf{5153351}534343515335153435152\ldots$$
Note that $251534,\;5153351\in\mathcal{L}_\sigma$ and
$l(251534)-l(5153351)=(-1,1,-1,1,-1)$. Let
$\bar{w}_1=\sigma(251534)$ and $\bar{w}_2=\sigma(5153351)$. Then
$\bar{w}_1,\;\bar{w}_2$ are recurrence words for $T$ and
\begin{eqnarray*}
l(\bar{w}_1)-l(\bar{w}_2)&=&Al(251534)-Al(5153351)=A(-1,1,-1,1,-1)\\&=&(-1,1,-1,1,-1)=b(S_1).
\end{eqnarray*}

The closed path $c(\tau^{sym}_5)$, the substitution $\sigma$ and
the recurrence words $\bar{w}_1$, $\bar{w}_2$ were found with the
help of Maple.

Now can we apply Theorems~\ref{roz_z_5} and  \ref{fuldis} to have
the following result.
\begin{theorem}\label{piec} If
$\pi\in\mathcal{R}(\tau^{sym}_5)\cap\widetilde{S}_5^0$ then for
almost every $\rho\in\Lambda_5$ the IET $T_{\rho,\pi}$ is disjoint
from ELF automorphisms. Moreover, the periodic type IET
$T_{\lambda,\tau^{sym}_5}$ is also disjoint from ELF
automorphisms.
\end{theorem}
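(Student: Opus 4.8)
The plan is to obtain Theorem~\ref{piec} by feeding the explicit data assembled in Section~\ref{secprzyklad} into the two abstract disjointness results Theorem~\ref{fuldis} and Theorem~\ref{roz_z_5}. All the substance has already been produced in this section: the closed path $c=bbaababaaaba$ in $\mathcal{R}(\tau_5^{sym})$ whose associated matrix $A$ has primitive square $B=A^2$ (coming from the doubled path $cc$), the periodic-type IET $T_{\lambda,\tau_5^{sym}}$ together with its associated substitution $\sigma$, and the two recurrence words $\bar{w}_1=\sigma(251534)$, $\bar{w}_2=\sigma(5153351)$. So the task is only to check hypotheses.

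For the first assertion I would apply Theorem~\ref{fuldis} with $\tau=\tau_5^{sym}$. Three conditions must be verified. First, $T_{\lambda,\tau_5^{sym}}$ satisfies the IDOC: it is of periodic type (the closed path $c$ has primitive associated matrix), so this is Remark~\ref{periodic-idoc}. Second, $\tau_5^{sym}\in\widetilde{S}_5^0$: this is the computation of $\eta_{\tau_5^{sym}}$ made above, which gives $\Sigma(\tau_5^{sym})=\{S_0,S_1\}$ with $b(S_0)=(1,-1,1,-1,1)=-b(S_1)$, hence $|b(S)|=\pm1$ for every $S\in\Sigma(\tau_5^{sym})$. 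Third, $T_{\lambda,\tau_5^{sym}}$ has recurrence words whose population vectors differ by a $b(S)$: the words $\bar{w}_1,\bar{w}_2$ exhibited above are recurrence words lying in $\mathcal{L}_\sigma=\mathcal{L}(T_{\lambda,\tau_5^{sym}})$ (they are $\sigma$-images of words of $\mathcal{L}_\sigma$ and every $\sigma(i)$ begins with $1$; see Remark~\ref{isosub}), and a direct matrix--vector multiplication gives
\[l(\bar{w}_1)-l(\bar{w}_2)=A\bigl(l(251534)-l(5153351)\bigr)=A(-1,1,-1,1,-1)=(-1,1,-1,1,-1)=b(S_1).\]
With these three points Theorem~\ref{fuldis} applies verbatim and yields: for every $\pi\in\mathcal{R}(\tau_5^{sym})\cap\widetilde{S}_5^0$ and almost every $\rho\in\Lambda_5$ the IET $T_{\rho,\pi}$ is disjoint from ELF automorphisms.

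For the second assertion --- disjointness of the particular IET $T_{\lambda,\tau_5^{sym}}$ --- I would instead apply Theorem~\ref{roz_z_5} to the pair $(\lambda/|\lambda|,\tau_5^{sym})$, since the ``almost every $\rho$'' of the first assertion need not cover this specific $\lambda$. It suffices to check $(\lambda/|\lambda|,\tau_5^{sym})\in\mathcal{A}\cap(\Lambda_5\times\widetilde{S}_5^0)$. Membership in $\Lambda_5\times\widetilde{S}_5^0$ is clear. For membership in $\mathcal{A}$: the set $\mathcal{M}$ of Subsection~\ref{disjointness} is built from the $\varepsilon$-neighbourhood $\mathcal{K}$ of the base point $\lambda/|\lambda|$ via the primitive matrix $B=A^2$, so taking the trivial perturbation $\lambda^{\varepsilon}=\lambda/|\lambda|\in\mathcal{K}$ and using $A\lambda=\theta\lambda$ (whence $B(\lambda/|\lambda|)$ is proportional to $\lambda/|\lambda|$) gives $\lambda/|\lambda|=B\lambda^{\varepsilon}/|B\lambda^{\varepsilon}|\in\mathcal{M}$; since $T_{\lambda,\tau_5^{sym}}$ is of periodic type with closed path $c$ of length $12$, $P^{12}(\lambda/|\lambda|,\tau_5^{sym})=(\lambda/|\lambda|,\tau_5^{sym})$, so the sequence $k_n=12n$ satisfies $P^{k_n}(\lambda/|\lambda|,\tau_5^{sym})\in\mathcal{M}$; and the IDOC holds by Remark~\ref{periodic-idoc}. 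Hence $(\lambda/|\lambda|,\tau_5^{sym})\in\mathcal{A}\cap(\Lambda_5\times\widetilde{S}_5^0)$, so by Theorem~\ref{roz_z_5} the IET $T_{\lambda/|\lambda|,\tau_5^{sym}}$ is weakly mixing and disjoint from all ELF automorphisms; as $T_{\lambda,\tau_5^{sym}}$ is isomorphic to it (merely a rescaling of the interval), the same holds for $T_{\lambda,\tau_5^{sym}}$.

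I do not expect a genuine obstacle in Theorem~\ref{piec} itself: the real work was done earlier --- the Maple-assisted search in Section~\ref{secprzyklad} for a closed path with primitive period matrix in $\mathcal{R}(\tau_5^{sym})$, the identification of $\sigma$, and the exhibition of the two recurrence words, together with the abstract machinery behind Theorems~\ref{fuldis} and~\ref{roz_z_5}. The only point requiring care is the second assertion, where one must not simply quote the ``almost every'' statement but instead verify directly that the self-similar point $(\lambda/|\lambda|,\tau_5^{sym})$ returns to $\mathcal{M}$ along an infinite sequence of times --- which is exactly the place where periodicity of type is used, and where it matters that $\mathcal{M}$ was manufactured as a neighbourhood of this very point.
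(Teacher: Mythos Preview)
Your proposal is correct and follows essentially the same route as the paper: the first assertion is obtained from Theorem~\ref{fuldis} using the recurrence words $\bar{w}_1,\bar{w}_2$ built in Section~\ref{secprzyklad}, and the second by verifying directly that the self-similar point lies in $\mathcal{A}$ so that Theorem~\ref{roz_z_5} applies. The only cosmetic difference is that the paper takes the return times $k_n=24n$ (matching the length of the doubled path $cc$ that produces the primitive matrix $B=A^2$), whereas you use $k_n=12n$; both choices work since the periodic point $(\lambda/|\lambda|,\tau_5^{sym})$ is $P^{12}$-fixed and belongs to $\mathcal{M}$.
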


\begin{proof}
The first part of the theorem is a simple consequence of
Theorem~\ref{fuldis}.

To prove the second part let us consider the set
$$\mathcal{M}=\left\{\left(\frac{B\lambda^{\varepsilon}}{|B\lambda^{\varepsilon}|},\tau^{sym}_5\right):\
(\lambda^{\varepsilon},\tau^{sym}_5)\in\mathcal{K}\right\}$$ with
$B=A^2=A(cc(\tau^{sym}_5))=A^{(24)}(\lambda,\tau^{sym}_5)$. (See
the beginning of Subsection~\ref{zaburzenia} for the definition of
$\mathcal{K}$). Since $B\lambda=\theta\lambda$ and
$P^{24}(\lambda,\tau^{sym}_5)=(\lambda,\tau^{sym}_5)$ we have
$P^{24k}(\lambda,\tau^{sym}_5)\in\mathcal{M}$ for every natural
$k$. Consequently, $(\lambda,\tau^{sym}_5)\in\mathcal{A}$, which
implies, by Theorem~\ref{roz_z_5}, the disjointness of
$T_{\lambda,\tau^{sym}_5}$ from ELF automorphisms.
\end{proof}

\section{Procedure of reduction}
In this section we describe a procedure which helps us to reduce
the problem of searching recurrence words satisfying
(\ref{recwords}) to a smaller number of intervals.

Assume that $m\geq 5$ is odd. Let $$\tau_m^{sym}=\begin{pmatrix}
    1&2&\ldots&m-1&m\\
    m&m-1&\ldots&2&1\\
        \end{pmatrix}$$
and $$\tau_m=\begin{pmatrix}
    1&2&3&4&\ldots&m-3&m-2&m-1&m\\
    m-1&1&m-2&m-3&\ldots&4&3&m&2\\
        \end{pmatrix}.$$
\begin{remark}\label{vectors}The cyclic sets of
$\eta_{\tau_m^{sym}}$ are of the form
$$S^m_0=\{0,2,4,\ldots,m-3,m-1\},\ S^m_1=\{1,3,\ldots,m-2,m\}$$ with associated vectors
$$b(S^m_0)=(1,-1,1,\ldots,1,-1,1),\ b(S^m_1)=(-1,1,-1,\ldots,-1,1,-1).$$ The cyclic sets of
$\eta_{\tau_m}$ are
$$Q^m_0=\{0,1,3,\ldots,m-4,m-2\},\ Q^m_1=\{2,4,\ldots,m-3,m-1,m\}$$
with
\begin{equation}\label{qzero}
b(Q^m_0)=(0,1,-1,1,\ldots,1,-1,1,0),\
b(Q^m_1)=(0,-1,1,-1,\ldots,-1,1,-1,0).
\end{equation} Hence
$\tau_m,\tau_m^{sym}\in\widetilde{S}_m^0$.
\end{remark}

\begin{lemma}\label{moj}
For any odd $m \geq 5$ we have
$\tau_m\in\mathcal{R}(\tau_m^{sym})$.
\end{lemma}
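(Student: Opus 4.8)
The plan is to avoid any genuine search in the Rauzy graph and instead produce a single permutation that is adjacent to both $\tau_m^{sym}$ and $\tau_m$. Since a Rauzy class is, by definition, a connected component of the Rauzy graph, it suffices to find $\pi^{\ast}\in S_m^0$ together with a Rauzy edge between $\tau_m^{sym}$ and $\pi^{\ast}$ and a Rauzy edge between $\tau_m$ and $\pi^{\ast}$; then $\tau_m^{sym}$ and $\tau_m$ lie in the same component, i.e.\ $\tau_m\in\mathcal{R}(\tau_m^{sym})$.

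The natural candidate is the permutation $\pi^{\ast}$ of $\{1,\dots,m\}$ defined by $\pi^{\ast}(1)=m$, $\pi^{\ast}(2)=1$ and $\pi^{\ast}(i)=m+2-i$ for $3\le i\le m$. First I would check that $\tau_m$ is irreducible, by a direct inspection of $\tau_m\{1,\dots,k\}$ for $1\le k\le m-1$, so that the operations $a,b$ and all the relevant edges of the Rauzy graph are defined at the permutations involved. Next I would verify the two identities $a\,\tau_m^{sym}=\pi^{\ast}$ and $b\,\tau_m=\pi^{\ast}$ by unwinding the case-definitions of $a$ and $b$ from the Rauzy induction subsection. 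For the first: $(\tau_m^{sym})^{-1}(m)=1$, so $a\tau_m^{sym}(1)=\tau_m^{sym}(1)=m$, $a\tau_m^{sym}(2)=\tau_m^{sym}(m)=1$, and $a\tau_m^{sym}(i)=\tau_m^{sym}(i-1)=m+2-i$ for $i\ge 3$. For the second: $\tau_m(m)=2$, so under $b$ the values $\le 2$, namely $\tau_m(2)=1$ and $\tau_m(m)=2$, stay put; the values strictly between $2$ and $m$ go up by one, so $\tau_m(1)=m-1\mapsto m$ and $\tau_m(i)=m+1-i\mapsto m+2-i$ for $3\le i\le m-2$; and the value $m$, which sits at position $m-1$, is sent to $\tau_m(m)+1=3$. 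Since $3=m+2-(m-1)$ and $2=m+2-m$, these data assemble exactly into $\pi^{\ast}$.

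Once $a\tau_m^{sym}=\pi^{\ast}=b\tau_m$ is established, the permutations $\tau_m^{sym}$, $\pi^{\ast}$, $\tau_m$ form a connected subgraph of the Rauzy graph (a two-edge path through $\pi^{\ast}$), which gives $\tau_m\in\mathcal{R}(\tau_m^{sym})$. I do not expect a real obstacle: the only point that needs care is matching the index ranges in the branch-definitions of $a$ and $b$ — in particular that for $b\tau_m$ the position $m-1$, where the symbol $m$ lies, produces the value $3$, in agreement with $\pi^{\ast}(i)=m+2-i$ at $i=m-1$, and that for $m=5$ the ``middle'' range $3\le i\le m-2$ reduces to a single index. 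As a sanity check, for $m=5$ this procedure yields $a\tau_5^{sym}=b\tau_5=$ the permutation with second row $5,1,4,3,2$, consistent with the explicit computation in the $5$-IET example.
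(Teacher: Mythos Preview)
Your argument is correct. The identity $a\,\tau_m^{sym}=\pi^{\ast}=b\,\tau_m$ checks out exactly as you describe, and since a Rauzy class is by definition a connected component of the Rauzy graph, the two edges $\tau_m^{sym}\xrightarrow{a}\pi^{\ast}$ and $\tau_m\xrightarrow{b}\pi^{\ast}$ suffice.

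The paper's proof is slightly different in form: it exhibits a \emph{directed} path from $\tau_m^{sym}$ to $\tau_m$, namely $b^{m-3}a\,\tau_m^{sym}=\tau_m$, by first computing $a\,\tau_m^{sym}=\pi^{\ast}$ (same as you) and then iterating $b$ a total of $m-3$ times. In effect the paper walks all the way around the $b$-cycle through $\pi^{\ast}$, while you step onto that cycle from both ends and meet at $\pi^{\ast}$ after one step each. Your version is a little cleaner---two edges independent of $m$ rather than a path of length $m-2$---at the modest cost of using connectivity of the underlying undirected graph rather than producing an explicit directed walk. Since in the Rauzy graph every vertex has in-degree and out-degree two and the graph is finite, weak and strong connectivity coincide on each component, so nothing is lost; but if one later needed an explicit Rauzy path (say to write down a concrete matrix product $A(c_1,\tau_m^{sym})\cdots A(c_n,\cdot)$ carrying $\tau_m^{sym}$ to $\tau_m$), the paper's formulation gives it directly.
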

\begin{proof} First note that \begin{displaymath}
        a\tau_m^{sym}(i)=\left\{
        \begin{array}{lll}
        m&\text{ if } & i=1, \\
        1&\text{ if } & i=2, \\
        m-i+2&\text{ if } & 3\leq i\leq m.\\
        \end{array}
        \right.
        \end{displaymath}
Set $b^k=\overbrace{b\circ\ldots\circ b}^{k}$ for $ k\geq 1$ and
$b^0=Id$. Thus for $1\leq k\leq m-3$,
\begin{displaymath}
        b^ka\tau_m^{sym}(i)=\left\{
        \begin{array}{ll}
        1 &\text{ if } i=2, \\
        2 &\text{ if } i=m, \\
        3 &\text{ if } i=(b^{k-1}a\tau_m^{sym})^{-1}(m), \\
        b^{k-1}a\tau_m^{sym}(i)+1 & \mbox{ otherwise}.\\
        \end{array}
        \right.
        \end{displaymath}
It follows that $b^{m-3}a\tau_m^{sym}=\tau_m$.
\end{proof}

\begin{theorem}\label{koncowe}
For every odd $m\geq 5$ there exists an IET
$T_{\lambda,\tau_m^{sym}}$ fulfilling the IDOC which has two
recurrence words $\bar{w}_1$, $\bar{w}_2$ such that
$l(\bar{w}_1)-l(\bar{w}_2)=b(S)$ for some
$S\in\Sigma(\tau_m^{sym})$.

If $\pi\in\widetilde{S}_m^0\cap \mathcal{R}(\tau_m^{sym})$ then
for almost all $\rho\in\Lambda_m$, $T_{\rho,\pi}$ is disjoint from
ELF automorphisms.
\end{theorem}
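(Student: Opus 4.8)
The plan is to prove the existence assertion by induction on the odd integer $m\ge 5$, and then to read off the disjointness assertion from Theorem~\ref{fuldis}. The base case $m=5$ is exactly what Section~\ref{example} provides: the periodic-type IET $T_{\lambda,\tau_5^{sym}}$ satisfies the IDOC and carries the recurrence words $\bar w_1=\sigma(251534)$, $\bar w_2=\sigma(5153351)$ with $l(\bar w_1)-l(\bar w_2)=b(S_1)$, $S_1\in\Sigma(\tau_5^{sym})$, so there is nothing more to do when $m=5$.

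For the inductive step I would assume the existence statement for $m-2$, so that we are handed an IET $\bar T=T_{\bar\lambda,\tau_{m-2}^{sym}}$ satisfying the IDOC together with recurrence words $\bar v_1,\bar v_2$ with $l(\bar v_1)-l(\bar v_2)=b(S^{m-2}_j)$, where $S^{m-2}_0,S^{m-2}_1$ are the two cyclic sets of $\eta_{\tau_{m-2}^{sym}}$ and $j\in\{0,1\}$ (Remark~\ref{vectors}). The reduction procedure should manufacture from $\bar T$ an IET $T=T_{\lambda,\tau_m}$ of $m$ intervals whose first-return map to a suitable sub-interval $Z$ coincides, up to an evident relabelling of letters, with $\bar T$; here the two letters $1$ and $m$ that the permutation $\tau_m$ singles out correspond to the two ``extra'' intervals, which is the reason for working with $\tau_m$ rather than $\tau_m^{sym}$, since by Remark~\ref{vectors} the vectors $b(Q^m_j)$ attached to the cyclic sets $Q^m_j\in\Sigma(\tau_m)$ vanish in the coordinates $1$ and $m$ and restrict in coordinates $2,\dots,m-1$ to $b(S^{m-2}_j)$. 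One must choose $\lambda$ so that $T$ satisfies the IDOC as well --- the analogue of Remark~\ref{periodic-idoc} and of the perturbation lemmas of Subsection~\ref{zaburzenia}, using that IDOC of a first-return map is equivalent to IDOC upstairs once the finitely many excursion patterns are fixed. Then, exactly as in Remark~\ref{iter}, Remark~\ref{liczpop} and the $m=5$ computation (all excursion codings begin with the letter recording the return to $Z$), one lifts $\bar v_1,\bar v_2$ to recurrence words $\bar w_1,\bar w_2\in\mathcal{L}(T)$ by replacing each letter by the $\{1,\dots,m\}$-word coding the corresponding $Z$-to-$Z$ excursion, and the population vectors satisfy $l(\bar w_r)=M\,l(\bar v_r)$, where $M$ is the $m\times(m-2)$ matrix whose columns are the population vectors of these excursion codings. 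Hence $l(\bar w_1)-l(\bar w_2)=M\,b(S^{m-2}_j)$, and the heart of the reduction is the identity $M\,b(S^{m-2}_j)=b(Q^m_j)$ --- an analogue, for the reduction step (which changes the number of intervals), of Lemma~\ref{ww} --- recording how the cyclic sets of $\eta_{\tau_{m-2}^{sym}}$ sit inside those of $\eta_{\tau_m}$.

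Granting this, $T_{\lambda,\tau_m}$ satisfies the IDOC and has recurrence words $\bar w_1,\bar w_2$ with $l(\bar w_1)-l(\bar w_2)=b(Q)$ for some $Q\in\Sigma(\tau_m)$. Since $\tau_m\in\mathcal{R}(\tau_m^{sym})$ by Lemma~\ref{moj}, Lemma~\ref{przeniesienie} applied with $\tau=\tau_m$ and $\pi=\tau_m^{sym}$ yields, for almost every $\rho\in\Lambda_m$, recurrence words $\bar w_1',\bar w_2'$ for $T_{\rho,\tau_m^{sym}}$ with $l(\bar w_1')-l(\bar w_2')=b(S')$, $S'\in\Sigma(\tau_m^{sym})$; intersecting with the full-measure set of $\rho$ for which $T_{\rho,\tau_m^{sym}}$ satisfies the IDOC supplies the required IET $T_{\lambda,\tau_m^{sym}}$, completing the induction. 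For the disjointness assertion: $\tau_m^{sym}\in\widetilde{S}_m^0$ by Remark~\ref{vectors}, so the existence assertion furnishes a pair $(\lambda,\tau_m^{sym})\in\Lambda_m\times\widetilde{S}_m^0$ meeting the hypotheses of Theorem~\ref{fuldis}; that theorem then gives, for every $\pi\in\widetilde{S}_m^0\cap\mathcal{R}(\tau_m^{sym})$ and almost every $\rho\in\Lambda_m$, the disjointness of $T_{\rho,\pi}$ from all ELF automorphisms.

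The main obstacle is the reduction procedure itself, and within it two verifications: that one can choose $\lambda$ so that $T_{\lambda,\tau_m}$ possesses both the prescribed first-return structure recovering the inductively-supplied $\bar T$ \emph{and} the IDOC; and the combinatorial identity $M\,b(S^{m-2}_j)=b(Q^m_j)$, which is precisely what keeps the construction inside the class $\widetilde{S}_m^0$. Everything else is bookkeeping with the tower and excursion codings already prepared in Subsections~\ref{rekurencyjne} and \ref{kodowanie} and in Remarks~\ref{uwagaoh}--\ref{liczpop}.
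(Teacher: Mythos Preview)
Your overall architecture --- induction on odd $m$, passage through the auxiliary permutation $\tau_m$, then Lemma~\ref{przeniesienie} to return to $\tau_m^{sym}$, and finally Theorem~\ref{fuldis} for the disjointness --- is exactly the paper's. The difference lies in the reduction step, and here the paper's argument is considerably simpler than what you propose.

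You want to build a genuine $m$-IET $T_{\lambda,\tau_m}$ whose first-return map to a subinterval recovers the inductively given $(m-2)$-IET, then lift recurrence words through excursion codings and verify a nontrivial identity $M\,b(S^{m-2}_j)=b(Q^m_j)$ for the excursion-population matrix $M$. The paper instead takes the \emph{degenerate} choice
\[
\widetilde{\lambda}=(0,\lambda_1,\ldots,\lambda_{m-2},0)\in\R_+^m,
\]
so that $T_{\widetilde{\lambda},\tau_m}$ is literally the same map on $[0,1)$ as $T_{\lambda,\tau_{m-2}^{sym}}$; the two extra intervals have length zero. The recurrence words $\bar w_r$ become $\widetilde{w}_r$ by simply shifting each letter by $+1$, and the population vectors are padded by zeros: $l(\widetilde{w}_r)=(0,l(\bar w_r),0)$. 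Your matrix $M$ is thus the trivial $m\times(m-2)$ padding, and the identity you flag as ``the heart of the reduction'' is immediate from Remark~\ref{vectors}: $(0,b(S^{m-2}_0),0)=b(Q^m_0)$.

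Of course $T_{\widetilde{\lambda},\tau_m}$ does not satisfy the IDOC (two intervals are empty), only the weak IDOC; but the perturbation lemmas of Subsection~\ref{zaburzenia} are stated precisely under the weak IDOC hypothesis, so one chooses $(\lambda',\tau_m)\in\mathcal{K}$ with the IDOC and Lemma~\ref{iteracje_3} guarantees that $\widetilde{w}_1,\widetilde{w}_2$ remain recurrence words for $T_{\lambda',\tau_m}$. Both of the obstacles you identify --- realising the prescribed first-return structure together with the IDOC, and the combinatorial identity for $M$ --- are thereby dissolved rather than confronted. Your route could presumably be made to work, but it introduces genuine technical burdens (existence of a suitable ``suspension'' of $\bar T$ with permutation $\tau_m$, and a Lemma~\ref{ww}-type relation across a change in the number of intervals) that the degenerate-plus-perturb trick avoids entirely.
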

\begin{proof}
The proof of the first part is by induction on $m$. In the base
step at $m=5$ we use the example from Section~\ref{secprzyklad}.

In the inductive step suppose that $m\geq 7$ and there exists an
IET $T_{\lambda,\tau_{m-2}^{sym}}$ ($|\lambda|=1$) with the IDOC
which has two recurrence words $\bar{w}_1$,
$\bar{w}_2\in\{1,\ldots,m-2\}^*$ of length $K_1$, $K_2$
respectively,  such that $l(\bar{w}_1)-l(\bar{w}_2)=b(S^{m-2}_0)$,
where $b(S^{m-2}_0)=(1,-1,1,\ldots,-1,1)$. Set
\[\widetilde{\lambda}=(0,\lambda_1,\ldots,\lambda_{m-2},0)\in\R^m_+\]
and
\[\widetilde{w}_r=w_1^r+1\ldots w_{K_r}^r+1\in\{1,\ldots,m\}^{K_r}\text{ for }r=1,2.\]
Since $T_{\widetilde{\lambda},\tau_m}\equiv
T_{\lambda,\tau_{m-2}^{sym}}$, the $m$-IET
$T_{\widetilde{\lambda},\tau_m}$ satisfies the weak IDOC. If
$\widetilde{\Delta}_1,\ldots,\widetilde{\Delta}_m$ are intervals
exchanged by $T_{\widetilde{\lambda},\tau_m}$ then
$T^k_{\widetilde{\lambda},\tau_m}x\in\widetilde{\Delta}_{j+1}$ if
and only if $T^k_{{\lambda},\tau^{sym}_{m-2}}x\in{\Delta}_{j}$. It
follows that $\widetilde{w}_1$, $\widetilde{w}_2$ are recurrence
words for $T_{\widetilde{\lambda},\tau_m}$. Moreover,
\[l(\widetilde{w}_r)=(0,l(\bar{w}_r),0)\text{ for }r=1,2,\]
and hence
\[l(\widetilde{w}_1)-l(\widetilde{w}_2)=(0,l(\bar{w}_1)-l(\bar{w}_2),0)=(0,b(S^{m-2}_0),0).\]
In view of (\ref{qzero}), we have
\[l(\widetilde{w}_1)-l(\widetilde{w}_2)=(0,b(S^{m-2}_0),0)=b(Q^m_0).\]
 Let $K=\max(K_1,K_2)+1$ and take
$\vep=\vep_{T_{\widetilde{\lambda},\tau_m},K}$ (see
(\ref{zalozenie}) in Subsection~\ref{zaburzenia}). Since
$$\mathcal{K}=\{(\lambda',\tau_m)\in\Lambda_m\times\{\tau_m\}:\
|\lambda'|=1,\ d(\lambda',\widetilde{\lambda})<\vep\}$$ has
positive Lebesgue measure on $\Lambda_m\times\{\tau_m\}$, we can
find  $(\lambda',\tau_m)\in\mathcal{K}$ such that
$T_{\lambda',\tau_m}$ satisfies the IDOC. By
Lemma~\ref{iteracje_3}, $\widetilde{w}_1$, $\widetilde{w}_2$ are
recurrence words for $T_{\lambda',\tau_m}$. In view of
Lemma~\ref{przeniesienie}, there exists
$(\widehat{\lambda},\tau^{sym}_m)\in\Lambda_m\times\{\tau^{sym}_m\}$
such that the IET $T_{\widehat{\lambda},\tau_m}$ satisfies the
IDOC and it has two recurrent words $\widehat{w}_1$ and
$\widehat{w}_2$ such that $l(\widehat{w}_1)-l(\widehat{w}_2)=
b(S)$ for some $S\in\Sigma(\tau^{sym}_m)$.

Now we apply Theorem~\ref{fuldis} to complete the proof of the
second part.
\end{proof}

\end{document}